\documentclass[12pt]{ociamthesis}

  \usepackage{amssymb}
  \usepackage{latexsym}
  \usepackage{amsfonts}
  \usepackage{amsthm}
  \usepackage{amsmath}
  \usepackage{amscd}
  \usepackage{enumerate}
  \usepackage{hyperref}
  \usepackage{xypic}
\usepackage[svgnames]{xcolor}
\usepackage{tikz} 
\usetikzlibrary{matrix,positioning,
    calc,
    arrows,
    decorations.markings,
    decorations.pathreplacing,
    backgrounds,
    patterns,
    shapes}
\usepackage{cite}
\usepackage[all,cmtip]{xy}
\usepackage{multirow}


\newcommand{\mc}{\mathcal}


\newcommand{\rr}{{\mathbb R}}

\newcommand{\nn}{{\mathbb N}}

\renewcommand{\c}{{\mathcal C}}

\renewcommand{\a}{\alpha}
\renewcommand{\b}{\beta}
\renewcommand{\d}{\delta}
\newcommand{\s}{\sigma}
\renewcommand{\S}{\Sigma}

\def\ba#1\ea{\begin{align*}#1\end{align*}}
\newcommand{\bcd}{\begin{CD}}
\newcommand{\ecd}{\end{CD}}

\def\bsm#1\esm{\left(\begin{smallmatrix}#1\end{smallmatrix}\right)}
\def\bm#1\em{\left(\begin{matrix}#1\end{matrix}\right)}


\newcommand{\lra}{\longrightarrow}

\newcommand{\ot}{\otimes}

\newcommand{\ob}{\mathrm{Ob}}

\newcommand{\mor}{\mathrm{Mor}}

\newcommand{\set}{\mathbf{Set}}

\newcommand{\rel}{\mathbf{Rel}}

\newcommand{\meas}{\mathbf{Meas}}
\newcommand{\stoch}{\mathbf{Stoch}}
\newcommand{\finstoch}{\mathbf{FinStoch}}
\newcommand{\cgst}{\mathbf{CGStoch}}

\newcommand{\pa}{\mathrm{pa}}

\renewcommand{\i}{^{-1}}
\newcommand{\id}{\mathrm{id}}


  \newtheorem{thm}{Theorem}[chapter]
  \newtheorem{lem}[thm]{Lemma}
  \newtheorem{prop}[thm]{Proposition}


  \theoremstyle{definition}
  \newtheorem{defn}[thm]{Definition}
  \newtheorem{defns}[thm]{Definitions}

  \theoremstyle{remark} 
  \newtheorem{ex}[thm]{Example}
  \newtheorem{exs}[thm]{Examples}
  
  \newtheorem{remark}[thm]{Remark}

\pgfdeclarelayer{edgelayer}
\pgfdeclarelayer{nodelayer}
\pgfsetlayers{edgelayer,nodelayer,main}

\tikzstyle{none}=[inner sep=0pt]
\tikzstyle{circ}=[circle,fill=White,draw=Black]
\tikzstyle{sq}=[rectangle,fill=White,draw=Black,inner sep=1pt,minimum width=15pt,minimum height=15pt]
\tikzstyle{dot}=[circle,fill=Black,draw=Black,inner sep=1pt]
\tikzstyle{tri}=[isosceles triangle, draw, shape border rotate=270, isosceles triangle apex angle=80, inner xsep=0pt, inner ysep=1pt]

\newcommand{\idline}{\begin{aligned}
\begin{tikzpicture}[scale=.3]
	\begin{pgfonlayer}{nodelayer}
		\node [style=none] (0) at (0, -0.5) {};
		\node [style=none] (1) at (0, 0.75) {};
	\end{pgfonlayer}
	\begin{pgfonlayer}{edgelayer}
		\draw (0.center) to (1.center);
	\end{pgfonlayer}
\end{tikzpicture}
\end{aligned}}

\newcommand{\comult}{\begin{aligned}
\begin{tikzpicture}[scale=.3]
	\begin{pgfonlayer}{nodelayer}
		\node [style=none] (0) at (-0.5, 0.5) {};
		\node [style=none] (1) at (0.5, 0.5) {};
		\node [style=none] (2) at (0, 0) {};
		\node [style=none] (3) at (0, -0.5) {};
		\node [style=none] (4) at (-0.5, 0.75) {};
		\node [style=none] (5) at (0.5, 0.75) {};
	\end{pgfonlayer}
	\begin{pgfonlayer}{edgelayer}
		\draw (4.center) to (0.center);
		\draw (5.center) to (1.center);
		\draw [in=0, out=-90, looseness=0.75] (1.center) to (2.center);
		\draw [in=180, out=-90, looseness=0.75] (0.center) to (2.center);
		\draw (2.center) to (3.center);
	\end{pgfonlayer}
\end{tikzpicture}
\end{aligned}}

\newcommand{\counit}{\begin{aligned}
\begin{tikzpicture}[scale=.3]
	\begin{pgfonlayer}{nodelayer}
		\node [style=dot] (0) at (0, 0.3) {};
		\node [style=none] (1) at (0, -0.5) {};
		\node [style=none] (2) at (0, 0.75) {};
	\end{pgfonlayer}
	\begin{pgfonlayer}{edgelayer}
		\draw (0) to (1.center);
	\end{pgfonlayer}
\end{tikzpicture}
\end{aligned}}

\newcommand{\swa}{\begin{aligned}
\begin{tikzpicture}[scale=.3]
	\begin{pgfonlayer}{nodelayer}
		\node [style=none] (1) at (-0.5, 1) {};
		\node [style=none] (2) at (0.5, 1) {};
		\node [style=none] (3) at (0.5, 0) {};
		\node [style=none] (4) at (-0.5, 0) {};
	\end{pgfonlayer}
	\begin{pgfonlayer}{edgelayer}
		\draw [in=90, out=-90, looseness=0.75] (1.center) to (3.center);
		\draw [in=90, out=-90, looseness=0.75] (2.center) to (4.center);
	\end{pgfonlayer}
\end{tikzpicture}
\end{aligned}}

\newcommand{\mult}{\begin{aligned}
\begin{tikzpicture}[scale=.3]
	\begin{pgfonlayer}{nodelayer}
		\node [style=none] (0) at (-0.5, -0.25) {};
		\node [style=none] (1) at (0.5, -0.25) {};
		\node [style=none] (2) at (0, 0.25) {};
		\node [style=none] (3) at (0, 0.75) {};
		\node [style=none] (4) at (-0.5, -0.5) {};
		\node [style=none] (5) at (0.5, -0.5) {};
	\end{pgfonlayer}
	\begin{pgfonlayer}{edgelayer}
		\draw (4.center) to (0.center);
		\draw (5.center) to (1.center);
		\draw [in=0, out=90, looseness=0.75] (1.center) to (2.center);
		\draw [in=180, out=90, looseness=0.75] (0.center) to (2.center);
		\draw (2.center) to (3.center);
	\end{pgfonlayer}
\end{tikzpicture}
\end{aligned}}

\newcommand{\unit}{\begin{aligned}
\begin{tikzpicture}[scale=.3]
	\begin{pgfonlayer}{nodelayer}
		\node [style=dot] (0) at (0, -0.5) {};
		\node [style=none] (1) at (0, 0.5) {};
	\end{pgfonlayer}
	\begin{pgfonlayer}{edgelayer}
		\draw (0) to (1.center);
	\end{pgfonlayer}
\end{tikzpicture}
\end{aligned}}

\title{Causal Theories:  \\ 
       A Categorical Perspective  \\
       on Bayesian Networks}   
\author{Brendan Fong}     
\college{Wolfson College}
\degree{Master of Science in Mathematics and the Foundations of Computer Science} 
\degreedate{Trinity 2012}    

\begin{document}
\baselineskip=18pt plus1pt
\setcounter{secnumdepth}{3} 
\setcounter{tocdepth}{3}

\maketitle                 
\begin{dedication}
\end{dedication}
 
\begin{acknowledgements}

It's been an amazing year, and I've had a good time learning and thinking about the contents of this essay. A number of people have had significant causal influence on this.

Foremost among these is my dissertation supervisor Jamie Vicary, who has been an excellent guide throughout, patient as I've jumped from idea to idea and with my vague questions, and yet careful to ensure I've stayed on track. We've had some great discussions too, and I thank him for them. John Baez got me started on this general topic, has responded enthusiastically and generously to probably too many questions, and, with the support of the Centre for Quantum Technologies, Singapore, let me come visit him to pester him with more. Bob Coecke has been a wonderful and generous general supervisor, always willing to talk and advise, and has provided many of the ideas that lurk in the background of those here. I thank both of them too. I also thank Rob Spekkens, Dusko Pavlovic, Prakash Panangaden, and Samson Abramsky for some interesting discussions and detailed responses to my queries.

For help of perhaps a less technical nature, but still very much appreciated, I thank Kati, Ross, and the rest of the MFoCS crowd, as well as Fiona, Jono, Daniel, and Shiori, for being happy to listen to me rant confusedly, eat cake, and generally making life better. And finally thank you to my brothers and my parents for their constant support and understanding, even as I've been far away.

\end{acknowledgements}  
\begin{abstract}

In this dissertation we develop a new formal graphical framework for causal reasoning. Starting with a review of monoidal categories and their associated graphical languages, we then revisit probability theory from a categorical perspective and introduce Bayesian networks, an existing structure for describing causal relationships. Motivated by these, we propose a new algebraic structure, which we term a causal theory. These take the form of a symmetric monoidal category, with the objects representing variables and morphisms ways of deducing information about one variable from another. A major advantage of reasoning with these structures is that the resulting graphical representations of morphisms match well with intuitions for flows of information between these variables. These categories can then be modelled in other categories, providing concrete interpretations for the variables and morphisms. In particular, we shall see that models in the category of measurable spaces and stochastic maps provide a slight generalisation of Bayesian networks, and naturally form a category themselves. We conclude with a discussion of this category, classifying the morphisms and discussing some basic universal constructions.

\end{abstract} 

\begin{romanpages} 
\tableofcontents
\end{romanpages}

\phantomsection
\addcontentsline{toc}{chapter}{Introduction}
\chapter*{Introduction}


From riding a bicycle to buying flowers for a friend, causal relationships form a basic and ubiquitous framework informing, at least informally, how we organise, reason about, and choose to interact with the world. It is perhaps surprising then that ideas of causality often are entirely absent from our formal scientific frameworks, whether they directly be models of the world, such as theories of physics, or methods for extracting information from data, as in the case of statistical techniques. It is the belief of the author that there remains much to be gained from formalising our intuitions regarding causality.

Indeed, taking the view that causal relationships are fundamental physical facts about the world, it is interesting to discover and discuss these in their own right. Even if one views causality only as a convenient way to organise information about dependencies between variables, however, it is hard to see how introducing such notions into formal theories, rather than simply ignoring these intuitions, will not benefit at least some of them. The artificial intelligence community gives a tangible example of this, with the widespread use of Bayesian networks indicating that causal relationships provide a far more efficient way to encode, update, and reason with information about random variables then simply working with the entire joint variable.

In what follows we lay out the beginnings of a formal framework for reasoning about causality. We shall do this by extending the aforementioned existing ideas for describing causal relationships between random variables through the use of category theory, and in particular the theory of monoidal categories.


\subsection*{Overview of the literature}

More precisely, in this dissertation we aim to bridge three distinct ideas. The first is the understanding of probability theory and probabilistic processes from a categorical perspective. For this we work with a category first defined by Lawvere a half-century ago in the unpublished manuscript \cite{L}, in which the objects are sets equipped with a $\s$-algebra and the morphisms specify a measure of the codomain for each element of the domain, subject to a regularity condition. These ideas were later developed, in 1982, in a short paper by Giry \cite{Gi}, and have been further explored by Doberkat \cite{Do}, Panangaden \cite{Pa}, and Wendt \cite{We}, among others, in recent years. 

The second idea is that of using graphical models to depict causal relationships. Termed Bayesian networks, these were first discussed predominantly in the machine learning community in the 1980s, with the seminal work coming in the influential book Pearl \cite{P2}. Since then Bayesian networks have been used extensively to discuss causality from both computational and philosophical perspectives, as can be seen in recent books Pearl \cite{P} and Williamson \cite{Wi}.

The third body of work, which will serve as a framework to unite the above two ideas, is the theory of monoidal categories and their graphical calculi. An introductory exposition of monoidal categories can be found in Mac Lane \cite{ML}, while the survey by Selinger \cite{Se} provides an excellent overview of the graphical ideas. Here our work is in particular influenced by that of the very recent paper by Coecke and Spekkens \cite{CS}, which uses monoidal categories to picture Bayesian inference, realising Bayesian inversion as a compact structure on an appropriate category. 

\subsection*{Outline}

Since they serve as the underlying framework for this thesis, we begin with a chapter reviewing the theory of monoidal categories, the last of the above ideas. We conclude this first chapter by discussing how the idea of a monoid can be generalised through a category we call the `theory of monoids', with monoids themselves being realised as monoidal functors from this category into the category $\set$ of sets and functions, while `generalised monoids' take the form of monoidal functors from the theory of monoids into other categories. It is in this sense the `causal theories' that we will define are theories. In Chapter 2 we turn our attention to reviewing the basic ideas of measure theoretic probability theory from a categorical viewpoint. Here we pay particular attention to the category, which will shall call $\stoch$, defined by Lawvere. Chapter 3 then provides some background on Bayesian networks, stating a few results about how they capture causal relationships between random variables through ideas of conditional independence. This motivates the definition of a causal theory, which we present in Chapter 4. Following our exploration of these categories and how to represent their morphisms graphically, we turn our attention to their models.  Although models in $\set$ and $\rel$ are interesting, we spend most of the time discussing models in Lawvere's category $\stoch$. This chapter is concluded by a discussion of confounding variables and Simpson's paradox, where we see some of the strengths of causal theories and in particular their graphical languages. In short, we show we can take the directed graph structure of a Bayesian network more seriously than just a suggestive depiction of dependencies. In the final chapter, Chapter 5, we discuss some properties of the category of stochastic causal models of a fixed causal theory. These are models in a certain full subcategory of $\stoch$ that omits various pathological eventuations, and have a close relationship with Bayesian networks.

\subsection*{New contributions}

The main contribution of this dissertation is the presentation of a new algebraic structure: causal theories. These are a type of symmetric monoidal category, and we will discuss how these capture the notion of causal relationships between variables; deterministic, possibilistic and probabilistic models of these categories; how their graphical calculi provide intuitive representations of reasoning and information flow; and some of the structure of the category of probabilistic models. In doing so we also move the discussion of Bayesian networks from the finite setting preferred by the computationally-focussed Bayesian network community to a more general setting capable of handling non-discrete probability spaces.

In particular, I claim all results of Chapters 4 and 5 as my own as well as, except for Proposition \ref{prop.detfunc}, the discussion of deterministic stochastic maps of Section 2.4.

\chapter{Preliminaries on Monoidal Categories}

Our aim is to explore representations of causal relationships between random variables from a categorical perspective. In this chapter we lay the foundations for this by introducing the basic language we will be working with---the language of monoidal categories---and, by way of example, informally discussing the notion of a `theory'.

We begin with a review of the relevant notions from category theory. Recall that a \emph{category} $\mc C$ consists of a collection $\ob \mc C$ of \emph{objects}, for each pair $A,B$ of objects a set $\mor(A,B)$ of \emph{morphisms}, and for each triple $A,B,C$ of objects a function, or \emph{composition rule}, $\mor(A,B) \times \mor(B,C) \to \mor(A,C)$, such that the composition rule is associative and obeys a unit law. We shall write $A \in \mc C$ if $A$ is an object of the category $\mc C$, and $f$ in $\mc C$ if $f$ is a morphism in the category $\mc C$. As we shall think of them, categories are the basic algebra structure capturing the idea of composable processes, with the objects of a category different systems of a given type, and the morphisms processes transforming one system into another. 

We further remind ourselves that a \emph{functor} is a map from one category to another preserving the composition rule, that under a mild size constraint the collection of categories itself forms a category with functors as morphisms, and that in this category products exist. Moreover, the set of functors between any two categories itself has the structure of a category in a standard, nontrivial way, and we call the morphisms in this category \emph{natural transformations}, with the invertible ones further called \emph{natural isomorphisms}. Two categories are \emph{equivalent} if there exists a functor in each direction between the two such that their compositions in both orders are naturally isomorphic to the identity functor.
\newpage

The reader seeking more detail is referred to Mac Lane \cite{ML}, in particular Chapters I and II. In general our terminology and notation for categories will follow the conventions set out there.

\section{Monoidal categories}

The key structure of interest to us in the following is that of a symmetric monoidal category. A monoidal category is a category with two notions of composition---ordinary categorical composition and the monoidal composition---, and symmetric monoidal categories may be thought of as the algebraic structure of processes that may occur simultaneously as well as sequentially. These categories are of special interest as they may be described precisely with a graphical notation possessing a logic that agrees well with natural topological intuitions. Among other things, this has been used to great effect in describing quantum protocols by Abramsky and Coecke \cite{AC}, and this work in part motivates that presented here.

\begin{defn}[Monoidal category]
A \emph{monoidal category} $(\c, \ot, I,\a,\rho,\lambda)$ consists of a category $\c$, together with a functor $\ot: \c \times \c \to \c$, a distinguished object $I \in \c$, for all objects $A,B,C \in \c$ isomorphisms $\a_{A,B,C}: (A \ot B) \ot C \to A \ot (B \ot C)$ in $\c$ natural in $A,B,C$, and for all objects $A \in \c$ isomorphisms $\rho_A: A \ot I  \to A$ and $\lambda_A: I \ot A \to A$ in $\c$ natural in $A$. To form a monoidal category, this data is subject to two equations: the \emph{pentagon equation}
\[
\xymatrix{
&\big((A \ot B) \ot C\big) \ot D \ar[dl]_{\a_{A,B,C}\ot\id_D} \ar[ddr]^{\a_{(A\ot B),C,D}} \\
\big(A \ot (B\ot C)\big) \ot D \ar[dd]_{\a_{A,(B\ot C),D}}\\
&&(A \ot B) \ot (C \ot D) \ar[ddl]^{\a_{A,B,(C\ot D)}}\\
A\ot\big((B \ot C)\ot D\big)\ar[dr]_{\id_A \ot \a_{B,C,D}} \\
&A \ot \big(B \ot (C \ot D)\big)
}
\]
and the \emph{triangle equation}
\[
\xymatrix{
(A\ot I)\ot B  \ar[rr]^{\a_{A,I,B}} \ar[ddr]_{\rho_{A}\ot \id_B} && A \ot (I \ot B) \ar[ddl]^{\id_A\ot \lambda_B}\\
\\
& A \ot B \\
}
\]
 
We call $\ot$ the \emph{monoidal product}, $I$ the \emph{monoidal unit}, the isomorphisms $\a, \a\i$ \emph{associators}, and the isomorphisms $\rho,\rho\i$ and $\lambda, \lambda\i$ \emph{right-} and \emph{left-unitors} respectively. Collectively, we call the associators and unitors the \emph{structure maps} of our monoidal category. We will often just write $\c$ for a monoidal category $(\c,\ot,I,\a,\rho,\lambda)$, leaving the remaining data implicit.
\end{defn}

The associators express the fact that the product objects $(A \ot B) \ot C$ and $A \ot (B \ot C)$ are in some sense the same---they are isomorphic via some canonical isomorphism---, while the unitors express the fact that $A$, $A \ot I$, and $I \ot A$ are the same. If these objects are in fact equal, and the structure maps are simply identity maps, then we say that our monoidal category is a \emph{strict monoidal category}. In this case then any two objects that can be related by structure maps are equal, and so we may write objects without parentheses and units without ambiguity. Although, importantly, this is not true in all cases, it is essentially true: loosely speaking, the triangle and pentagon equations in fact imply that any diagram of their general kind, expressing composites of structure maps between different ways of forming the monoidal product of some objects, commutes. This is known as Mac Lane's coherence theorem for monoidal categorise; see Mac Lane \cite[Corollary of Theorem VII.2.1]{ML} for a precise statement and proof.

In a monoidal category, the objects $A \ot B$ and $B \ot A$ need not in general be related in any way. In the cases we will interest ourselves, however, we will not want the order in which we write the objects in a tensor product to matter---all products consisting of a given collection of objects should be isomorphic, and isomorphic in a way we need not worry about the isomorphism itself. This additional requirement turns a monoidal category into a symmetric monoidal category.

\begin{defn}[Symmetric monoidal category] \label{def:smc}
A \emph{symmetric monoidal category} $(\c, \ot, I,\a,\rho,\lambda, \s)$ consists of a monoidal category $(\c, \ot, I,\a,\rho,\lambda)$ together with a collection of isomorphisms $\s_{A,B}: A \ot B \to B \ot A$ natural in $A$ and $B$ such that $\s_{B,A} \circ \s_{A,B} = \id_{A\ot B}$ and such that for all objects $A,B,C$ the hexagon
\[
\xymatrix{
& (A \ot B) \ot C \ar[dl]_{\a_{A,B,C}} \ar[dr]^{\s_{A,B} \ot \id_C} \\
A \ot (B \ot C) \ar[dd]_{\s_{A,B \ot C}} && (B \ot A) \ot C \ar[dd]^{\a_{B,A,C}} \\
\\
(B \ot C) \ot A \ar[dr]_{\a_{B,C,A}} && B \ot (A \ot C) \ar[dl]^{\id_B \ot \s_{A,C}}\\
& B \ot (C \ot A)
}
\]
commutes. 

We call the isomorphisms $\s_{-,-}$ \emph{swaps}. 
\end{defn}

As for monoidal categories, we have a coherence theorem for symmetric monoidal categories, stating in essence that all diagrams composed of identities, associators, unitors, and swaps commute. Details can again be found in Mac Lane \cite[Theorem XI.1.1]{ML}.

\begin{exs}[$\mathbf{FVect}$, $\mathbf{Mat}$]
An historically important example of a symmetric monoidal category is that of $\mathbf{FVect}_\rr$, the category of finite vector spaces over $\rr$ with linear maps as morphisms, tensor product as monoidal product. Here $\rr$ is the monoidal unit, and the structure maps are the obvious isomorphisms of tensor products of vector spaces. Note that this is not a strict symmetric monoidal category: it is not true for real vector spaces $U,V,W$ that we consider $(U \ot V) \ot W$ and $U \ot (V \ot W)$ as equal, but we do always have a canonical isomorphism between the two.

A related strict symmetric monoidal category is $\mathbf{Mat(\rr)}$, the category with objects natural numbers, morphisms from $m \in \mathbb{N}$ to $n \in \mathbb{N}$ given by $n \times m$ matrices over $\rr$, composition given by composition of matrices, monoidal product given by multiplication on objects and Kronecker product of matrices on morphisms.
\end{exs}

\begin{ex}[$\set$]
The category $\mathbf{Set}$ of sets and functions forms a symmetric monoidal category with the cartesian product $\times$. In this category any singleton set $\{\ast\}$ may be taken as the monoidal unit. Indeed, any category with finite products can be viewed as a symmetric monoidal category by taking the binary categorical product as the monoidal product, and the terminal object as the monoidal unit. The associators, unitors, and swaps are then specified by the unique isomorphisms given by the universal property of the product.
\end{ex}

\begin{ex}[$\rel$]
The category $\rel$ of sets and relations forms a symmetric monoidal category with cartesian product $\times$ and unit $\{\ast\}$. Here the monoidal product $r \ot s: X \times Z \to Y \times W$ of relations $r X \to Y$ and $s: Z \to W$ is the relation such that $(x,z) \in X \times Z$ is related to $(y,w) \in Y \times W$ if and only if $x$ is related to $y$ by $r$ and $z$ is related to $w$ by $s$.

Intuitively, the standard embedding of $\set$ into $\rel$, given by viewing functions as relations, is an embedding that respects the monoidal structure. To make this precise we need to talk about monoidal functors.
\end{ex}

When working with monoidal categories, it is often desirable to have functors between these categories preserve the monoidal structure, and to have natural transformations between these functors preserve the monoidal structure too. The same is true in the case of functors between symmetric monoidal categories. We thus introduce the notions of monoidal functors, symmetric monoidal functors, and monoidal natural transformations.

\begin{defn}[Monoidal functor]
Let $\c, \c'$ be monoidal categories. A \emph{monoidal functor} $(F, F_\ot, F_\ast): \c \to \c'$ from $\c$ to $\c'$ consists of a functor $F: \c \to \c'$, for all objects $A,B \in \c$ morphisms
\[
F_{\ot,A,B}: F(A) \ot F(B) \to F(A \ot B)
\]
in $\c'$ which are natural in $A$ and $B$, and for the units $I$ of $\c$ and $I'$ of $\c'$ a morphism $F_\ast: I' \to F(I)$ in $\c'$, such that for all $A,B,C \in \c$ the hexagon
\[
\xymatrix{
& (FA \ot FB) \ot FC \ar[dl]_{\a_{FA,FB,FC}} \ar[dr]^{F_{\ot,A,B} \ot \id_{FC}} \\
FA \ot (FB \ot FC) \ar[dd]_{\id_{FA} \ot F_{\ot,B,C}} && F(A \ot B) \ot FC \ar[dd]^{F_{\ot,A\ot B,C}} \\
\\
FA \ot F(B \ot C) \ar[dr]_{F_{\ot,A,B\ot C}} && F((A \ot B) \ot C) \ar[dl]^{F\a_{A,B,C}}\\
&F(A \ot (B \ot C))
}
\]
and the two squares
\[
\xymatrixcolsep{3pc}
\xymatrixrowsep{3pc}
\xymatrix{
F(A) \ot I' \ar[d]_{\id \ot F_\ast} \ar[r]^{\rho} & F(A) \\
F(A) \ot F(I) \ar[r]_{F_{\ot,A,I}} & F(A \ot I) \ar[u]_{F\rho} 
}
\qquad
\xymatrix{
I' \ot F(A) \ar[d]_{F_\ast \ot \id} \ar[r]^{\lambda} & F(A) \\
F(I) \ot F(A) \ar[r]_{F_{\ot,I,A}} & F(I \ot A) \ar[u]_{F\lambda} 
}
\]
commute.

We further say a monoidal functor is a \emph{strong monoidal functor} if the morphisms $F_{\ot,A,B}$ and $F_\ast$ are isomorphisms for all $A, B \in \c$. 
\end{defn}

\begin{defn}[Symmetric monoidal functor]
A symmetric monoidal functor \newline \mbox{$(F, F_\ot, F_\ast): \c \to \c'$} between symmetric monoidal categories $\c$ and $\c'$ is a monoidal functor $(F, F_\ot, F_\ast)$ such that
\[
\xymatrixcolsep{3pc}
\xymatrixrowsep{3pc}
\xymatrix{
FA \ot FB \ar[r]^{F_{\ot,A,B}} \ar[d]_{\s'_{FA,FB}} & F(A \ot B)\ar[d]^{F\s_{A,B}}\\
FB \ot FA \ar[r]_{F_{\ot,B,A}} & F(B \ot A)
}
\]
commutes for all $A,B \in \c$.
\end{defn}

\begin{defn}[Monoidal natural transformation]
A \emph{monoidal natural transformation} $\theta: F \Rightarrow G$ between two monoidal functors $F$ and $G$ is a natural transformation $\theta: F \Rightarrow G$ such that the triangle
\[
\begin{aligned}
\xymatrix{
FI \ar[rr]^{\theta_I}&& GI \\
& I' \ar[ul]^{F_\ast} \ar[ur]_{G_\ast}
} 
\end{aligned} \label{mnt:tri} \tag{MNT1}
\]
and square
\[
\begin{aligned}
\xymatrixcolsep{3pc}
\xymatrixrowsep{3pc}
\xymatrix{
FA \ot FB \ar[r]^{\theta_A \ot \theta_B} \ar[d]_{F_{\ot,A,B}} & GA \ot GB \ar[d]^{G_{\ot,A,B}}\\
F(A \ot B) \ar[r]_{\theta_{A\ot B}} & G(A \ot B)
}
\end{aligned} 
\label{mnt:sq} \tag{MNT2}
\]
commute for all objects $A,B$.
\end{defn}

\begin{ex}
$\mathbf{FVect_\rr}$ and $\mathbf{Mat(\rr)}$ are equivalent via strong monoidal functors. It is a corollary of the Mac Lane Coherence theorems that any monoidal category can be `strictified'---that is, for any monoidal category there exists a strict monoidal category equivalent to it via strong monoidal functors---and that a symmetric monoidal category can be strictified into a strict symmetric monoidal category. For more details see \cite[Theorem XI.3.1]{ML}.
\end{ex}

\section{Graphical calculi}

One of the draws of expressing concepts as symmetric monoidal categories is that the structure of these categories naturally lends itself to being expressed pictorially. These pictures, known as \emph{string diagrams}, represent the morphisms of a monoidal category, and have the benefit of hiding certain structural equalities and making use of our topological intuitions to suggest other important equalities. The aim of this section is merely to give the reader a basic working understanding of how to read and draw these diagrams; we leave the precise definition of a string diagram and proofs of their expressiveness to the survey \cite{Se} of Selinger.

String diagrams are drawn in two dimensions with, roughly speaking, one dimension representing the categorical composition and the other representing monoidal composition. We shall take the convention, common but far from universal, that we read composition up the page, leaving horizontal juxtaposition to represent the monoidal product of maps. Under this convention then, a string diagram consists of a graph with edges labelled by objects and vertices labelled by morphisms, which as a whole represents a morphism with domain the monoidal product of the edges at the lower end of the diagram, and codomain the monoidal product of the edges at the top.

The simplest example, consisting of just a single edge, represents the identity map:
\[
\id_X =
\begin{aligned}
\begin{tikzpicture}
	\begin{pgfonlayer}{nodelayer}
		\node [style=none] (0) at (0, 1) {};
		\node [style=none] (1) at (0, -1) {};
		\node [style=none] (2) at (0, 1.25) {$X$};
		\node [style=none] (3) at (0, -1.25) {$X$};
	\end{pgfonlayer}
	\begin{pgfonlayer}{edgelayer}
		\draw (0.center) to (1.center);
	\end{pgfonlayer}
\end{tikzpicture}
\end{aligned}
\]
More generally, we represent a morphism $f: X \to Y$ by drawing in sequence up the page an edge labelled by $X$, ending at a vertex labelled by $f$, which then gives rise to an edge labelled by $Y$:
\[
f =
\begin{aligned}
\begin{tikzpicture}
	\begin{pgfonlayer}{nodelayer}
		\node [style=none] (0) at (0, 1) {};
		\node [style=sq] (1) at (0, 0) {$f$};
		\node [style=none] (2) at (0, -1) {};
		\node [style=none] (3) at (0, 1.25) {$Y$};
		\node [style=none] (4) at (0, -1.25) {$X$};
	\end{pgfonlayer}
	\begin{pgfonlayer}{edgelayer}
		\draw (0.center) to (1.center);
		\draw (1.center) to (2.center);
	\end{pgfonlayer}
\end{tikzpicture}
\end{aligned}
\]
If $X = A \ot B \ot C$, and $Y = D \ot E$, we could also represent $f$ as:
\[
f =
\begin{aligned}
\begin{tikzpicture}
	\begin{pgfonlayer}{nodelayer}
		\node [style=none] (0) at (0, -0.25) {};
		\node [style=none] (1) at (0.5, -0.25) {};
		\node [style=none] (2) at (-0.5, -0.25) {};
		\node [style=none] (3) at (-.75, -0.25) {};
		\node [style=none] (4) at (-.75, 0.25) {};
		\node [style=none] (5) at (-0.3, 0.25) {};
		\node [style=none] (6) at (0.3, 0.25) {};
		\node [style=none] (7) at (.75, 0.25) {};
		\node [style=none] (8) at (.75, -0.25) {};
		\node [style=none] (9) at (0.5, -1) {};
		\node [style=none] (10) at (0, -1) {};
		\node [style=none] (11) at (-0.5, -1) {};
		\node [style=none] (12) at (-0.3, 1) {};
		\node [style=none] (13) at (0.3, 1) {};
		\node [style=none] (14) at (-0.5, -1.25) {$A$};
		\node [style=none] (15) at (0, -1.25) {$B$};
		\node [style=none] (16) at (0.5, -1.25) {$C$};
		\node [style=none] (17) at (-0.3, 1.25) {$D$};
		\node [style=none] (18) at (0.3, 1.25) {$E$};
		\node [style=none] (19) at (0, 0) {$f$};
	\end{pgfonlayer}
	\begin{pgfonlayer}{edgelayer}
		\draw (8.center) to (3.center);
		\draw (3.center) to (4.center);
		\draw (4.center) to (7.center);
		\draw (7.center) to (8.center);
		\draw (11.center) to (2.center);
		\draw (10.center) to (0.center);
		\draw (9.center) to (1.center);
		\draw (5.center) to (12.center);
		\draw (6.center) to (13.center);
	\end{pgfonlayer}
\end{tikzpicture}
\end{aligned}
\]

Given maps $f:X \to Y$ and $g: Y \to Z$, we represent their composite $g \circ f$ by placing a vertex $g$ on the $Y$-edge leaving the vertex $f$:
\[
g \circ f =
\begin{aligned}
\begin{tikzpicture}
	\begin{pgfonlayer}{nodelayer}
		\node [style=none] (0) at (0, 1) {};
		\node [style=sq] (1) at (0, -0.4) {$f$};
		\node [style=none] (2) at (0, -1) {};
		\node [style=none] (3) at (0, 1.25) {$Z$};
		\node [style=none] (4) at (0, -1.25) {$X$};
		\node [style=sq] (5) at (0, 0.4) {$g$};
	\end{pgfonlayer}
	\begin{pgfonlayer}{edgelayer}
		\draw (1) to (2.center);
		\draw (5) to (1);
		\draw (5) to (0.center);
	\end{pgfonlayer}
\end{tikzpicture}
\end{aligned}
\]
If the types of the maps are known, we lose no information if we omit the labels of edges that are connected to the vertices of the maps, as we have done for edge representing $Y$ in the above diagram. For the sake of cleanness and readability, we shall most often just label the `input' and `output' edges at the top and bottom of the diagram.

The monoidal product of two maps is given by their horizontal juxtaposition, with juxtaposition on the right representing monoidal product on the right, and on the left representing left monoidal product. As an example, given morphisms $f:X \to Y$ and $g: Z \to W$, we write their product $f \ot g: X \ot Z \to Y \ot W$ as:
\[
f \ot g =
\begin{aligned}
\begin{tikzpicture}
	\begin{pgfonlayer}{nodelayer}
		\node [style=none] (0) at (-0.5, 1) {};
		\node [style=none] (1) at (-0.5, -1) {};
		\node [style=none] (2) at (-0.5, 1.25) {$Y$};
		\node [style=none] (3) at (-0.5, -1.25) {$X$};
		\node [style=sq] (4) at (-0.5, 0) {$f$};
		\node [style=none] (5) at (0.25, -1.25) {$Z$};
		\node [style=none] (6) at (0.25, 1.25) {$W$};
		\node [style=none] (7) at (0.25, -1) {};
		\node [style=none] (8) at (0.25, 1) {};
		\node [style=sq] (9) at (0.25, 0) {$g$};
	\end{pgfonlayer}
	\begin{pgfonlayer}{edgelayer}
		\draw (0.center) to (4);
		\draw (4) to (1.center);
		\draw (8.center) to (9);
		\draw (9) to (7.center);
	\end{pgfonlayer}
\end{tikzpicture}
\end{aligned}
\]
The monoidal unit is an object with special properties in the category, and as a result the conventions for representing the unit diagrammatically are a little different: we don't draw it or its identity map
\[
\id_I = \begin{aligned}
\begin{tikzpicture}
	\begin{pgfonlayer}{nodelayer}
		\node [style=none] (0) at (0, 1) {};
		\node [style=none] (1) at (0, -1) {};
	\end{pgfonlayer}
\end{tikzpicture}
\end{aligned}
\]
This has the advantage of any diagram representing a morphism $f: A \to B$ also representing the `equivalent' morphism $f \ot \id_I: A \ot I \to B \ot I$, among other equivalent morphisms.

To read an arbitrary string diagram, it is often easiest to start at the lower edge and move up the diagram, reading off a morphism for every horizontal cross-section intersecting a vertex. The string diagram then represents the composite of these morphisms in the order that the morphisms were read, applying associators and unitors as needed for the map to be well-defined. For example, reading in this way the diagram
\[
\begin{aligned}
\begin{tikzpicture}
	\begin{pgfonlayer}{nodelayer}
		\node [style=none] (0) at (-0.5, 1.2) {};
		\node [style=none] (1) at (-0.5, -1.2) {};
		\node [style=sq] (4) at (-0.5, -.4) {$f$};
		\node [style=sq] (5) at (-0.5, .4) {$h$};
		\node [style=none] (7) at (0.25, -1.2) {};
		\node [style=none] (8) at (0.25, 1.2) {};
		\node [style=sq] (9) at (0.25, -.4) {$g$};
		\node [style=sq] (10) at (0.25, .4) {$k$};
	\end{pgfonlayer}
	\begin{pgfonlayer}{edgelayer}
		\draw (0.center) to (4);
		\draw (4) to (1.center);
		\draw (8.center) to (9);
		\draw (9) to (7.center);
	\end{pgfonlayer}
\end{tikzpicture}
\end{aligned}
\]
represents the map $(f \ot g) \circ (h \ot k)$. Note that it may also be read $(f \circ h) \ot (g \circ k)$, or even $(f \circ \id_X \circ h) \ot (g \circ k) \ot \id_I$ where $X$ is the codomain of $f$, but in any case all these different algebraic descriptions of the picture represent the same morphism. This is a key feature of string diagrams: many equalities of algebraic representations of morphisms become just the identity of diagrams. Furthermore, we need not be too careful about the precise geometry of the diagrams; the following topologically equivalent diagrams in fact also express equal morphisms:
\[
\begin{aligned}
\begin{tikzpicture}
	\begin{pgfonlayer}{nodelayer}
		\node [style=none] (0) at (-0.5, 1.2) {};
		\node [style=none] (1) at (-0.5, -1.2) {};
		\node [style=sq] (4) at (-0.5, -.4) {$f$};
		\node [style=sq] (5) at (-0.5, .4) {$h$};
		\node [style=none] (7) at (0.25, -1.2) {};
		\node [style=none] (8) at (0.25, 1.2) {};
		\node [style=sq] (9) at (0.25, -.4) {$g$};
		\node [style=sq] (10) at (0.25, .4) {$k$};
	\end{pgfonlayer}
	\begin{pgfonlayer}{edgelayer}
		\draw (0.center) to (4);
		\draw (4) to (1.center);
		\draw (8.center) to (9);
		\draw (9) to (7.center);
	\end{pgfonlayer}
\end{tikzpicture}
\end{aligned}
\quad
=
\quad
\begin{aligned}
\begin{tikzpicture}
	\begin{pgfonlayer}{nodelayer}
		\node [style=none] (0) at (-0.5, 1.2) {};
		\node [style=none] (1) at (-0.5, -1.2) {};
		\node [style=sq] (4) at (-0.5, -.7) {$f$};
		\node [style=sq] (5) at (-0.5, .6) {$h$};
		\node [style=none] (7) at (0.25, -1.2) {};
		\node [style=none] (8) at (0.25, 1.2) {};
		\node [style=sq] (9) at (0.25, -.4) {$g$};
		\node [style=sq] (10) at (0.25, .3) {$k$};
	\end{pgfonlayer}
	\begin{pgfonlayer}{edgelayer}
		\draw (0.center) to (4);
		\draw (4) to (1.center);
		\draw (8.center) to (9);
		\draw (9) to (7.center);
	\end{pgfonlayer}
\end{tikzpicture}
\end{aligned}
\quad
=
\quad
\begin{aligned}
\begin{tikzpicture}
	\begin{pgfonlayer}{nodelayer}
		\node [style=none] (0) at (-0.3, 1.2) {};
		\node [style=none] (1) at (-0.5, -1.2) {};
		\node [style=sq] (4) at (-0.5, -.6) {$f$};
		\node [style=sq] (5) at (-0.2, .6) {$h$};
		\node [style=none] (7) at (0.25, -1.2) {};
		\node [style=none] (8) at (0.4, 1.2) {};
		\node [style=sq] (9) at (0.25, -.8) {$g$};
		\node [style=sq] (10) at (0.25, -.2) {$k$};
	\end{pgfonlayer}
	\begin{pgfonlayer}{edgelayer}
		\draw [in=90, out=-90, looseness=0.75]  (0.center) to (5);
		\draw [in=90, out=-90, looseness=0.75]  (5) to (4);
		\draw (4) to (1.center);
		\draw (7.center) to (9);
		\draw (9) to (10);		
		\draw  [in=90, out=-90, looseness=0.75] (8.center) to (10);
	\end{pgfonlayer}
\end{tikzpicture}
\end{aligned}
\]
This holds true in general.

\begin{thm}[Coherence of the graphical calculus for monoidal categories]
Two morphisms in a monoidal category are equal with their equality following from the axioms of monoidal categories if and only if their diagrams are equal up to planar deformation.
\end{thm}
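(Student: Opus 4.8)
The plan is to recognise this statement as a theorem about the \emph{free} monoidal category and to prove it as a bijection between the morphisms of that category and planar-isotopy classes of diagrams. Concretely, I would fix a \emph{signature} $\mathcal{G}$ consisting of a set of generating objects together with a set of generating morphisms, each assigned a domain and codomain that is a finite word in the generating objects. The phrase ``equal with their equality following from the axioms of monoidal categories'' then means precisely: equal as morphisms of the free monoidal category $\mathcal{F}(\mathcal{G})$, whose morphisms are formal composites and monoidal products of the generators and structure maps, quotiented by exactly the monoidal-category axioms. Thus the left-hand side of the biconditional is the relation ``equal in $\mathcal{F}(\mathcal{G})$''. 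For the right-hand side I would give a precise topological definition, following Joyal and Street: a string diagram is a \emph{progressive plane diagram} in the rectangle $\rr \times [0,1]$ whose edges are labelled by generating objects and are monotone in the vertical coordinate, whose vertices (boxes) are labelled by generating morphisms with the incident edges matching their types, and with prescribed boundary points at heights $0$ and $1$; ``equal up to planar deformation'' I read as \emph{planar isotopy through such progressive diagrams}. A useful first reduction is that, by the strictification theorem (the Example above; see \cite[Theorem XI.3.1]{ML}) together with Mac Lane coherence, I may replace $\mathcal{F}(\mathcal{G})$ by its strict version, so that associators and unitors become identities. This is exactly what licenses omitting the structure maps from the diagrams, and it reduces the remaining content to the interchange and unit laws.

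The strategy is then to construct a map $\Phi$ from the morphisms of $\mathcal{F}(\mathcal{G})$ to isotopy classes of diagrams and show it is a bijection on each hom-set. \textbf{Soundness and well-definedness.} Reading a formal expression layer by layer produces a diagram, and I must check that the axioms are sent to isotopies. The interchange law, which is just bifunctoriality of $\ot$, corresponds to sliding two boxes lying in independent columns past one another in height; the identity and unit laws correspond to inserting or deleting straight wire segments. Each such elementary algebraic move is visibly realised by a planar isotopy, so $\Phi$ descends to equivalence classes. \textbf{Surjectivity.} Conversely, every progressive diagram may be perturbed into generic position so that no two boxes share a height; slicing it by horizontal lines between consecutive box-heights exhibits it as a vertical composite of layers, each a monoidal product of a single box with identity wires. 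Transcribing this slicing yields a formal expression mapping onto the diagram, so $\Phi$ is onto.

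The heart of the argument, and the step I expect to be the main obstacle, is \textbf{faithfulness}: showing that if two expressions yield planar-isotopic diagrams then they are already equal in $\mathcal{F}(\mathcal{G})$. Here I would argue by a Morse-theoretic analysis of the isotopy. Given an isotopy $H$ between two progressive diagrams, I would perturb it so that the height function stays generic at all but finitely many times, and so that at each exceptional time exactly one standard event occurs --- two boxes in independent columns exchanging their relative heights, or a redundant identity segment being created or cancelled. At each event the combinatorial slicing data changes by precisely one of the elementary moves catalogued in the soundness step, each a provable consequence of the monoidal axioms; composing these finitely many provable equalities identifies the two expressions in $\mathcal{F}(\mathcal{G})$. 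The delicate point is the transversality argument establishing that a generic isotopy really does decompose into finitely many such standard events, with no degenerate accumulation; once strictification has removed all associator and unitor bookkeeping, this is the only genuinely hard part. A complete development of these topological details is the content of the survey \cite{Se}, to which I would defer for the full verification.
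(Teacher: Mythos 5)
Your proposal is essentially the standard Joyal--Street argument (free monoidal category on a signature, progressive plane diagrams, soundness of the elementary moves, generic slicing for surjectivity, and a transversality analysis of the isotopy for faithfulness), and the paper proves this theorem simply by citing Joyal--Street \cite[Theorem 1.2]{JS}. Since you correctly identify the structure of that proof and, like the paper, defer the genuinely hard topological details to the literature, your approach matches the paper's.
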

\begin{proof}
Joyal-Street \cite[Theorem 1.2]{JS}.
\end{proof}

In a symmetric monoidal category, we usually omit the label for the swap, denoting it instead just by the intersection of two strings:
\[
\s_{A,B} =
\begin{aligned}
\begin{tikzpicture}
	\begin{pgfonlayer}{nodelayer}
		\node [style=none] (0) at (-0.5, 1.25) {};
		\node [style=none] (1) at (0.5, 1.25) {};
		\node [style=none] (2) at (-0.5, -0.25) {};
		\node [style=none] (3) at (0.5, -0.25) {};
		\node [style=none] (4) at (-0.5, 0.75) {};
		\node [style=none] (5) at (0.5, 0.75) {};
		\node [style=none] (6) at (0.5, 0.25) {};
		\node [style=none] (7) at (-0.5, 0.25) {};
		\node [style=none] (8) at (-0.5, -0.5) {$A$};
		\node [style=none] (9) at (0.5, -0.5) {$B$};
		\node [style=none] (10) at (-0.5, 1.5) {$B$};
		\node [style=none] (11) at (0.5, 1.5) {$A$};
		\node [style=none] (12) at (-0.75, 0.75) {};
		\node [style=none] (13) at (0.75, 0.75) {};
		\node [style=none] (14) at (0.75, 0.25) {};
		\node [style=none] (15) at (-0.75, 0.25) {};
		\node [style=none] (16) at (0, 0.5) {$\s_{A,B}$};
	\end{pgfonlayer}
	\begin{pgfonlayer}{edgelayer}
		\draw (0.center) to (4.center);
		\draw (6.center) to (3.center);
		\draw (1.center) to (5.center);
		\draw (7.center) to (2.center);
		\draw (14.center) to (15.center);
		\draw (15.center) to (12.center);
		\draw (12.center) to (13.center);
		\draw (13.center) to (14.center);
	\end{pgfonlayer}
\end{tikzpicture}
\end{aligned}
=
\begin{aligned}
\begin{tikzpicture}
	\begin{pgfonlayer}{nodelayer}
		\node [style=none] (0) at (-0.5, 1.25) {};
		\node [style=none] (1) at (0.5, 1.25) {};
		\node [style=none] (2) at (-0.5, -0.25) {};
		\node [style=none] (3) at (0.5, -0.25) {};
		\node [style=none] (4) at (-0.5, 1) {};
		\node [style=none] (5) at (0.5, 1) {};
		\node [style=none] (6) at (0.5, 0) {};
		\node [style=none] (7) at (-0.5, 0) {};
		\node [style=none] (8) at (-0.5, -0.5) {$A$};
		\node [style=none] (9) at (0.5, -0.5) {$B$};
		\node [style=none] (10) at (-0.5, 1.5) {$B$};
		\node [style=none] (11) at (0.5, 1.5) {$A$};
	\end{pgfonlayer}
	\begin{pgfonlayer}{edgelayer}
		\draw (0.center) to (4.center);
		\draw [in=90, out=-90, looseness=0.75] (4.center) to (6.center);
		\draw (6.center) to (3.center);
		\draw (1.center) to (5.center);
		\draw [in=90, out=-90, looseness=0.75] (5.center) to (7.center);
		\draw (7.center) to (2.center);
	\end{pgfonlayer}
\end{tikzpicture}
\end{aligned}
\]
We will also later take such an approach for other chosen maps, such as the multiplication and unit of a monoid.

The defining identities of the swap may then be written graphically as
\[
\begin{aligned}
\begin{tikzpicture}
	\begin{pgfonlayer}{nodelayer}
		\node [style=none] (0) at (-0.5, -1.25) {};
		\node [style=none] (1) at (0.5, -1.25) {};
		\node [style=none] (2) at (-0.5, 0) {};
		\node [style=none] (3) at (0.5, 0) {};
		\node [style=none] (4) at (0.5, -1) {};
		\node [style=none] (5) at (-0.5, -1) {};
		\node [style=none] (6) at (-0.5, -1.5) {$A$};
		\node [style=none] (7) at (0.5, -1.5) {$B$};
		\node [style=none] (8) at (0.5, 0) {};
		\node [style=none] (9) at (0.5, 1) {};
		\node [style=none] (10) at (-0.5, 1.5) {$A$};
		\node [style=none] (11) at (0.5, 1.5) {$B$};
		\node [style=none] (12) at (-0.5, 1.25) {};
		\node [style=none] (13) at (-0.5, 0) {};
		\node [style=none] (14) at (-0.5, 1) {};
		\node [style=none] (15) at (0.5, 1.25) {};
	\end{pgfonlayer}
	\begin{pgfonlayer}{edgelayer}
		\draw [in=90, out=-90, looseness=0.75] (2.center) to (4.center);
		\draw (4.center) to (1.center);
		\draw [in=90, out=-90, looseness=0.75] (3.center) to (5.center);
		\draw (5.center) to (0.center);
		\draw (12.center) to (14.center);
		\draw [in=90, out=-90, looseness=0.75] (14.center) to (8.center);
		\draw (15.center) to (9.center);
		\draw [in=90, out=-90, looseness=0.75] (9.center) to (13.center);
	\end{pgfonlayer}
\end{tikzpicture}
\end{aligned}
\quad
=
\quad
\begin{aligned}
\begin{tikzpicture}
	\begin{pgfonlayer}{nodelayer}
		\node [style=none] (0) at (-0.5, -1.25) {};
		\node [style=none] (1) at (0.5, -1.25) {};
		\node [style=none] (2) at (-0.5, -1.5) {$A$};
		\node [style=none] (3) at (0.5, -1.5) {$B$};
		\node [style=none] (4) at (-0.5, 1.5) {$A$};
		\node [style=none] (5) at (0.5, 1.5) {$B$};
		\node [style=none] (6) at (-0.5, 1.25) {};
		\node [style=none] (7) at (0.5, 1.25) {};
	\end{pgfonlayer}
	\begin{pgfonlayer}{edgelayer}
		\draw (6.center) to (0.center);
		\draw (7.center) to (1.center);
	\end{pgfonlayer}
\end{tikzpicture}
\end{aligned}\label{swapdiag1}\tag{Sym1}
\]
and
\[\label{swapdiag2}\tag{Sym2}
\begin{aligned}
\begin{tikzpicture}
	\begin{pgfonlayer}{nodelayer}
		\node [style=none] (0) at (-0.5, -0.25) {};
		\node [style=none] (1) at (0.5, -0.25) {};
		\node [style=none] (2) at (-0.5, 1.5) {};
		\node [style=none] (3) at (0.5, 0.5) {};
		\node [style=none] (4) at (-0.5, 0.5) {};
		\node [style=none] (5) at (-0.5, -0.5) {$A$};
		\node [style=none] (6) at (0.5, -0.5) {$B$};
		\node [style=none] (7) at (1.5, 0.5) {};
		\node [style=none] (8) at (0.5, 2.5) {$C$};
		\node [style=none] (9) at (1.5, 2.5) {$A$};
		\node [style=none] (10) at (0.5, 2.25) {};
		\node [style=none] (11) at (0.5, 1.5) {};
		\node [style=none] (12) at (1.5, 2.25) {};
		\node [style=none] (13) at (1.5, -0.25) {};
		\node [style=none] (14) at (-0.5, 2.25) {};
		\node [style=none] (15) at (-0.5, 2.5) {$B$};
		\node [style=none] (16) at (1.5, -0.5) {$C$};
		\node [style=none] (17) at (0, 1) {};
		\node [style=none] (18) at (1, 1) {};
		\node [style=none] (19) at (1.5, 1.5) {};
	\end{pgfonlayer}
	\begin{pgfonlayer}{edgelayer}
		\draw [in=90, out=-90, looseness=0.75] (2.center) to (3.center);
		\draw (3.center) to (1.center);
		\draw (4.center) to (0.center);
		\draw (10.center) to (11.center);
		\draw [in=90, out=-90, looseness=0.75] (11.center) to (7.center);
		\draw (14.center) to (2.center);
		\draw (7.center) to (13.center);
		\draw [in=180, out=90] (4.center) to (17.center);
		\draw (17.center) to (18.center);
		\draw [in=-90, out=0] (18.center) to (19.center);
		\draw (19.center) to (12.center);
	\end{pgfonlayer}
\end{tikzpicture}
\end{aligned}
\quad
=
\quad
\begin{aligned}
\begin{tikzpicture}
	\begin{pgfonlayer}{nodelayer}
		\node [style=none] (0) at (-0.5, -0.25) {};
		\node [style=none] (1) at (0.5, -0.25) {};
		\node [style=none] (2) at (-0.5, 1) {};
		\node [style=none] (3) at (0.5, 1) {};
		\node [style=none] (4) at (0.5, 0) {};
		\node [style=none] (5) at (-0.5, 0) {};
		\node [style=none] (6) at (-0.5, -0.5) {$A$};
		\node [style=none] (7) at (0.5, -0.5) {$B$};
		\node [style=none] (8) at (1.5, 1) {};
		\node [style=none] (9) at (1.5, 2) {};
		\node [style=none] (10) at (0.5, 2.5) {$C$};
		\node [style=none] (11) at (1.5, 2.5) {$A$};
		\node [style=none] (12) at (0.5, 2.25) {};
		\node [style=none] (13) at (0.5, 1) {};
		\node [style=none] (14) at (0.5, 2) {};
		\node [style=none] (15) at (1.5, 2.25) {};
		\node [style=none] (16) at (1.5, -0.25) {};
		\node [style=none] (17) at (-0.5, 2.25) {};
		\node [style=none] (18) at (-0.5, 2.5) {$B$};
		\node [style=none] (19) at (1.5, -0.5) {$C$};
	\end{pgfonlayer}
	\begin{pgfonlayer}{edgelayer}
		\draw [in=90, out=-90, looseness=0.75] (2.center) to (4.center);
		\draw (4.center) to (1.center);
		\draw [in=90, out=-90, looseness=0.75] (3.center) to (5.center);
		\draw (5.center) to (0.center);
		\draw (12.center) to (14.center);
		\draw [in=90, out=-90, looseness=0.75] (14.center) to (8.center);
		\draw (15.center) to (9.center);
		\draw [in=90, out=-90, looseness=0.75] (9.center) to (13.center);
		\draw (17.center) to (2.center);
		\draw (8.center) to (16.center);
	\end{pgfonlayer}
\end{tikzpicture}
\end{aligned}
\]
Including these identity into our collection of allowable transformations of diagrams gives coherence theorem for symmetric monoidal categories.

\begin{thm}[Coherence of the graphical calculus for symmetric monoidal categories]
Two morphisms in a symmetric monoidal category are equal with their equality following from the axioms of symmetric monoidal categories if and only if their diagrams are equal up to planar deformation and local applications of the identities \ref{swapdiag1} and \ref{swapdiag2}.
\end{thm}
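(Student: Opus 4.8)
The plan is to bootstrap off the already-established coherence theorem for monoidal categories (the Joyal--Street result quoted above), treating the symmetric structure as an extra layer on top. Throughout I take the formal notion of a string diagram and of planar deformation as given by the framework of Joyal--Street \cite{JS}, in which a diagram is an isotopy class of a suitably embedded labelled graph in a rectangle and deformation means ambient isotopy fixing the boundary; the crossing of two strings representing $\s_{A,B}$ is an additional generator, drawn transversally.

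First I would prove \emph{soundness}, i.e.\ the ``if'' direction: if two diagrams differ by a planar deformation together with local applications of \ref{swapdiag1} and \ref{swapdiag2}, then the morphisms they represent are equal by the symmetric monoidal axioms. That a planar deformation preserves the represented morphism is exactly the monoidal coherence theorem. It then remains to observe that each of the two local moves is nothing but a diagrammatic transcription of a defining axiom: \ref{swapdiag1} restates $\s_{B,A}\circ\s_{A,B}=\id_{A\ot B}$, and \ref{swapdiag2} restates the hexagon of Definition \ref{def:smc}. Hence replacing one side of such a move by the other inside a larger diagram leaves the composite morphism unchanged.

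Next, \emph{completeness} (the ``only if'' direction), which carries the real content. Here I would argue that, relative to the monoidal axioms already handled, the extra generators of the equivalence relation ``provably equal by the symmetric monoidal axioms'' are exactly three: naturality of $\s$, the self-inverse law, and the hexagon. I would then match each with an allowed diagram move. The self-inverse law and the hexagon are \ref{swapdiag1} and \ref{swapdiag2} verbatim. Naturality of $\s_{A,B}$---the equation $\s_{C,D}\circ(f\ot g)=(g\ot f)\circ\s_{A,B}$ for $f:A\to C$ and $g:B\to D$---says precisely that the two vertices may be slid from below a crossing to above it, which is realised by a planar deformation of the underlying graph. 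With these correspondences in hand I would run an induction on the length of a derivation witnessing that two morphisms are provably equal: the base case is reflexivity, and each inductive step applies one axiom, hence effects one of the permitted diagram transformations, so composing the moves assembles the required chain of deformations and local applications of \ref{swapdiag1} and \ref{swapdiag2} between the two diagrams.

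The main obstacle is the naturality step, and more broadly making ``deformation'' precise enough to carry it. Intuitively, sliding a box past a crossing is obvious, but a crossing is not a planar feature of the diagram, so one must verify that such a slide is genuinely an ambient isotopy of the embedded graph rather than a move that secretly invokes one of the symmetry relations; this is the technical heart of the Joyal--Street analysis and the reason their formalism distinguishes progressive (monoidal) from symmetric diagrams. Once the isotopy framework is set up correctly, the combinatorial bookkeeping matching the three axioms to the three moves is routine, and for a fully rigorous treatment I would simply appeal to \cite{JS}.
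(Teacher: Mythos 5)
The paper offers no argument of its own here---its ``proof'' is the bare citation to Joyal--Street \cite[Theorem 2.3]{JS}---so the real comparison is between your sketch and the standard literature, to which you also ultimately defer. Your soundness/completeness decomposition, bootstrapping off the monoidal coherence theorem and running an induction on derivations that matches the three additional axioms (naturality of $\s$, the self-inverse law, the hexagon) to diagram moves, is the standard shape of the argument and is sound in outline.

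The one step you should not wave through is precisely the one you flag: naturality of $\s$ is \emph{not} a planar deformation. An ambient isotopy of the plane rel boundary cannot carry a vertex through a transversal crossing of two strings, and none of planar isotopy, \ref{swapdiag1}, or \ref{swapdiag2} changes on which side of a crossing a given box sits; so with the move set read literally, the equation $\s_{C,D}\circ(f\ot g)=(g\ot f)\circ\s_{A,B}$ is not reachable and your induction stalls exactly at the naturality case. Your phrase ``planar deformation of the underlying graph'' quietly switches from the embedded diagram to the abstract labelled graph, and that switch is the whole content of the issue. The standard repair---the one the paper itself gestures at in the remark following the theorem (``isotopy in four dimensions'') and the one Joyal--Street and Selinger actually adopt---is to take diagrams up to isomorphism of the underlying labelled graph, equivalently to admit the box-through-crossing slide as an explicit further local move alongside \ref{swapdiag1} and \ref{swapdiag2}. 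With that adjustment made explicit, the rest of your bookkeeping (and the appeal to \cite{JS} for the formal isotopy framework) closes the argument.
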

\begin{proof}
Joyal-Street \cite[Theorem 2.3]{JS}.
\end{proof}
Just as two diagrams represent the same morphism in a monoidal category if they agree up to planar isotopy, this theorem may be regarded geometrically as stating that two diagrams represent the same morphism in a monoidal category if they agree up to isotopy in four dimensions.

These two theorems show that the graphical calculi go beyond visualisations of the morphisms, having the ability to provide bona-fide proofs of equalities of morphisms. As a general principle, one which we shall demonstrate in this dissertation, this fact combined the intuitiveness of manipulations and the encoding of certain equalities and structural isomorphisms make the string diagrams better than the conventional algebraic language for understanding monoidal categories.

\section{Example: the theory of monoids}
This section serves to both give examples of the constructions defined in this chapter and, more importantly, give a flavour of the spirit in which we will aim to use monoidal categories to discuss causality. 

Recall that a monoid is a set with an associative, unital binary operation. We shall classify these as strong monoidal functors from a category $\mathbf{Th(Mon)}$ into $\set$, and hence say that this category $\mathbf{Th(Mon)}$ describes the theory of monoids. The study of this category and its functorial images then gives new and interesting perspectives of the concept of a monoid, its generalisations, and relationships to other mathematical structures. In analogy to this, we will later define causal theories as monoidal categories that can be modelled within other categories through monoidal functors.

Define the category $\mathbf{Th(Mon)}$ as follows: fix some symbol $M$, and let the objects of $\mathbf{Th(Mon)}$ be any natural number of copies of this symbol. We shall write the objects $M^{\ot n}$, where $n \in \nn$ is the number of copies of $M$. Then the monoidal product on the objects of $\mathbf{Th(Mon)}$ is just addition of number of copies of $M$, with $I = M^{\ot 0}$ the monoidal unit. By definition this is a strict monoidal category, so we need not worry about the structure maps.

In addition to the identity morphism on each object, we also include morphisms $\mult: M \ot M \to M$ and $\unit: I \to M$ and all their composites and products, subject to the relations 
\[
\begin{aligned}
\begin{tikzpicture}
	\begin{pgfonlayer}{nodelayer}
		\node [style=none] (0) at (0.5, -0.5) {};
		\node [style=none] (1) at (1, 0.5) {};
		\node [style=none] (2) at (1.5, -0.25) {};
		\node [style=none] (3) at (1, 0.25) {};
		\node [style=none] (4) at (0.5, -0.25) {};
		\node [style=none] (5) at (1.5, -0.5) {};
		\node [style=none] (6) at (1.5, 1.5) {};
		\node [style=none] (7) at (2, 0.5) {};
		\node [style=none] (8) at (1.5, 1) {};
		\node [style=none] (9) at (1, 0.5) {};
		\node [style=none] (10) at (2, -0.5) {};
	\end{pgfonlayer}
	\begin{pgfonlayer}{edgelayer}
		\draw (0.center) to (4.center);
		\draw (5.center) to (2.center);
		\draw [in=0, out=90, looseness=0.75] (2.center) to (3.center);
		\draw [in=180, out=90, looseness=0.75] (4.center) to (3.center);
		\draw (3.center) to (1.center);
		\draw [in=0, out=90, looseness=0.75] (7.center) to (8.center);
		\draw [in=180, out=90, looseness=0.75] (9.center) to (8.center);
		\draw (8.center) to (6.center);
		\draw (7.center) to (10.center);
	\end{pgfonlayer}
\end{tikzpicture}
\end{aligned}
\quad
=
\quad
\begin{aligned}
\begin{tikzpicture}
	\begin{pgfonlayer}{nodelayer}
		\node [style=none] (0) at (0.5, -0.5) {};
		\node [style=none] (1) at (1, 0.5) {};
		\node [style=none] (2) at (1.5, -0.25) {};
		\node [style=none] (3) at (1, 0.25) {};
		\node [style=none] (4) at (0.5, -0.25) {};
		\node [style=none] (5) at (1.5, -0.5) {};
		\node [style=none] (6) at (0, -0.5) {};
		\node [style=none] (7) at (0.5, 1.5) {};
		\node [style=none] (8) at (1, 0.5) {};
		\node [style=none] (9) at (0.5, 1) {};
		\node [style=none] (10) at (0, 0.5) {};
	\end{pgfonlayer}
	\begin{pgfonlayer}{edgelayer}
		\draw (0.center) to (4.center);
		\draw (5.center) to (2.center);
		\draw [in=0, out=90, looseness=0.75] (2.center) to (3.center);
		\draw [in=180, out=90, looseness=0.75] (4.center) to (3.center);
		\draw (3.center) to (1.center);
		\draw (6.center) to (10.center);
		\draw [in=0, out=90, looseness=0.75] (8.center) to (9.center);
		\draw [in=180, out=90, looseness=0.75] (10.center) to (9.center);
		\draw (9.center) to (7.center);
	\end{pgfonlayer}
\end{tikzpicture}
\end{aligned} \tag{associativity}
\]
and
\[
\begin{aligned}
\begin{tikzpicture}
	\begin{pgfonlayer}{nodelayer}
		\node [style=none] (0) at (-0.5, -1.25) {};
		\node [style=none] (1) at (0, 0.75) {};
		\node [style=none] (2) at (0.5, -0.5) {};
		\node [style=none] (3) at (0, 0) {};
		\node [style=none] (4) at (-0.5, -0.5) {};
		\node [style=dot] (5) at (0.5, -0.75) {};
	\end{pgfonlayer}
	\begin{pgfonlayer}{edgelayer}
		\draw (0.center) to (4.center);
		\draw [in=-90, out=90] (5) to (2.center);
		\draw [in=0, out=90, looseness=0.75] (2.center) to (3.center);
		\draw [in=180, out=90, looseness=0.75] (4.center) to (3.center);
		\draw (3.center) to (1.center);
	\end{pgfonlayer}
\end{tikzpicture}
\end{aligned}
\quad
=
\quad
\begin{aligned}
\begin{tikzpicture}
	\begin{pgfonlayer}{nodelayer}
		\node [style=none] (0) at (0, 1) {};
		\node [style=none] (1) at (0, -1) {};
	\end{pgfonlayer}
	\begin{pgfonlayer}{edgelayer}
		\draw (1.center) to (0.center);
	\end{pgfonlayer}
\end{tikzpicture}
\end{aligned}
\quad
=
\quad
\begin{aligned}
\begin{tikzpicture}
	\begin{pgfonlayer}{nodelayer}
		\node [style=dot] (0) at (-0.5, -0.75) {};
		\node [style=none] (1) at (0, 0.75) {};
		\node [style=none] (2) at (0.5, -0.5) {};
		\node [style=none] (3) at (0, 0) {};
		\node [style=none] (4) at (-0.5, -0.5) {};
		\node [style=none] (5) at (0.5, -1.25) {};
	\end{pgfonlayer}
	\begin{pgfonlayer}{edgelayer}
		\draw (0) to (4.center);
		\draw [in=-90, out=90] (5.center) to (2.center);
		\draw [in=0, out=90, looseness=0.75] (2.center) to (3.center);
		\draw [in=180, out=90, looseness=0.75] (4.center) to (3.center);
		\draw (3.center) to (1.center);
	\end{pgfonlayer}
\end{tikzpicture}
\end{aligned} \tag{unitality}
\]
These equations correspond respectively to the associativity and unitality laws for the monoid.

Now given any monoid $(X,\cdot,1)$, we can define the strong monoidal functor $F:\mathbf{Th(Mon)} \to \set$ mapping $M^{\ot n}$ to the $n$-fold cartesian product $X^n$ of the set $X$, $m$ to the monoid multiplication function $X \times X \to X; (x,y) \mapsto x\cdot y$, and $e$ to the function $\{\ast\} \to X; \ast \mapsto 1$ with image the monoid unit. This is well-defined as the relations obeyed by $m$ and $e$ are precisely those required to ensure the monoid operation $\cdot$ is associative and unital. Furthermore, taking the canonical isomorphisms $X^{m} \times X^{n} \stackrel\sim\longrightarrow X^{m+n}$ given by the universal property of products, we see that $F$ is a strong monoidal functor.

Conversely, given any strong monoidal functor $(F,F_\ot,F_\ast):\mathbf{Th(Mon)} \to \set$, it is straightforward to show, using the naturality of $F_\ot$ and the diagrams obeyed by the definition of a strong monoidal functor, that the triple $(FM, Fm \circ F_{\ot,M,M},Fe \circ F_\ast(\ast))$ is a well-defined monoid. From here it also can be shown that these two constructions are inverses up to isomorphism, and so we have bijections
\[
\left\{ \begin{array}{c} \mbox{isomorphism classes of} \\ \mbox{monoids} \end{array} \vphantom{\begin{array}{c} . \\ .\\. \end{array}}\right\} \longleftrightarrow \left\{\begin{array}{c} \mbox{isomorphism classes of} \\ \mbox{strong monoidal functors} \\ \mathbf{Th(Mon)} \to \set\end{array} \right\}.
\]
This shows that the strong monoidal functors from $\mathbf{Th(Mon)}$ to $\set$ classify all monoids.

In fact, the category $\mathbf{Th(Mon)}$ classifies not only monoids themselves, but also the maps between them. Indeed, given monoids $X$, $X'$ and corresponding strong monoidal functors $F$, $F'$, we also have a bijection
\[
\left\{\begin{array}{c} \mbox{monoid homomorphisms} \\ X \to X'\end{array} \right\} \longleftrightarrow \left\{\begin{array}{c} \mbox{monoidal natural transformations} \\ F \Rightarrow F' \end{array} \right\}.
\]
This bijection sends a monoid homomorphism $\varphi: X \to X'$ to the monoidal natural transformation defined on $M \in \mathbf{Th(Mon)}$ by $\varphi: FM \to F'M$. The requirement that monoid homomorphisms preserve the identity corresponds to the triangle \ref{mnt:tri} that monoidal natural transformations must obey, with the requirement that monoid homomorphisms preserve the monoid multiplication corresponds to the square \ref{mnt:sq}. It is further possible to show that these bijections respect composition of monoid homomorphisms and monoidal natural transformations. This shows that the category of monoids is equivalent to the category of strong monoidal functors from $\mathbf{Th(Mon)}$ to $\set$. It is in this strong sense that $\mathbf{Th(Mon)}$ classifies monoids, and for this reason we call this category the theory of monoids.

One advantage of this perspective is that we may now talk of monoid objects in other monoidal categories, which are often interesting structures in their own right. This often gives insight into the relationships between known mathematical structures. For example, the category of monoid objects in the monoidal category $(\mathbf{Ab},\ot, \mathbb{Z})$ of abelian groups with tensor product as monoidal product and $\mathbb{Z}$ as the monoidal unit can be shown to be precisely the category of rings.

We will use this idea of defining generalised monoid-like objects in categories other than $\set$ in pursuing a categorical definition of a causal theory. In particular, we will be interested in commutative comonoid objects.

\begin{defn}[Commutative comonoid]
As for in defining $\mathbf{Th(Mon)}$, fix a symbol $M$, and define the symmetric monoidal category $\mathbf{Th(CComon)}$ to be the symmetric monoidal category with objects tensor powers of $M$ and morphisms generated by the swaps and the maps $\comult: M \to M \ot M$, $\counit: M \to I$, subject to the relations
\[
\begin{aligned}
\begin{tikzpicture}
	\begin{pgfonlayer}{nodelayer}
		\node [style=none] (0) at (-0.5, 0.25) {};
		\node [style=none] (1) at (0.5, 0.25) {};
		\node [style=none] (2) at (0, -0.75) {};
		\node [style=none] (3) at (0, -0.25) {};
		\node [style=none] (4) at (0, 1) {};
		\node [style=none] (5) at (-0.5, 0.25) {};
		\node [style=none] (6) at (-0.5, 0.5) {};
		\node [style=none] (7) at (0, 1.5) {};
		\node [style=none] (8) at (-1, 1) {};
		\node [style=none] (9) at (-1, 1.5) {};
		\node [style=none] (10) at (0.5, 1.5) {};
	\end{pgfonlayer}
	\begin{pgfonlayer}{edgelayer}
		\draw [in=0, out=-90, looseness=0.75] (1.center) to (3.center);
		\draw [in=180, out=-90, looseness=0.75] (0.center) to (3.center);
		\draw (3.center) to (2.center);
		\draw (9.center) to (8.center);
		\draw (7.center) to (4.center);
		\draw [in=0, out=-90, looseness=0.75] (4.center) to (6.center);
		\draw [in=180, out=-90, looseness=0.75] (8.center) to (6.center);
		\draw (6.center) to (5.center);
		\draw (1.center) to (10.center);
	\end{pgfonlayer}
\end{tikzpicture}
\end{aligned}
\quad
=
\quad
\begin{aligned}
\begin{tikzpicture}
	\begin{pgfonlayer}{nodelayer}
		\node [style=none] (0) at (-0.5, 1.5) {};
		\node [style=none] (1) at (-0.5, 0.25) {};
		\node [style=none] (2) at (0.5, 0.25) {};
		\node [style=none] (3) at (0, -0.75) {};
		\node [style=none] (4) at (0, -0.25) {};
		\node [style=none] (5) at (1, 1) {};
		\node [style=none] (6) at (0.5, 0.25) {};
		\node [style=none] (7) at (0.5, 0.5) {};
		\node [style=none] (8) at (1, 1.5) {};
		\node [style=none] (9) at (0, 1) {};
		\node [style=none] (10) at (0, 1.5) {};
	\end{pgfonlayer}
	\begin{pgfonlayer}{edgelayer}
		\draw (0.center) to (1.center);
		\draw [in=0, out=-90, looseness=0.75] (2.center) to (4.center);
		\draw [in=180, out=-90, looseness=0.75] (1.center) to (4.center);
		\draw (4.center) to (3.center);
		\draw (10.center) to (9.center);
		\draw (8.center) to (5.center);
		\draw [in=0, out=-90, looseness=0.75] (5.center) to (7.center);
		\draw [in=180, out=-90, looseness=0.75] (9.center) to (7.center);
		\draw (7.center) to (6.center);
	\end{pgfonlayer}
\end{tikzpicture}
\end{aligned}
\tag{coassociativity}
\]
\[
\begin{aligned}
\begin{tikzpicture}
	\begin{pgfonlayer}{nodelayer}
		\node [style=none] (0) at (-0.5, 1) {};
		\node [style=none] (1) at (-0.5, 0.25) {};
		\node [style=none] (2) at (0.5, 0.25) {};
		\node [style=dot] (3) at (0.5, 0.5) {};
		\node [style=none] (4) at (0, -1) {};
		\node [style=none] (5) at (0, -0.25) {};
	\end{pgfonlayer}
	\begin{pgfonlayer}{edgelayer}
		\draw (0.center) to (1.center);
		\draw (3) to (2.center);
		\draw [in=0, out=-90, looseness=0.75] (2.center) to (5.center);
		\draw [in=180, out=-90, looseness=0.75] (1.center) to (5.center);
		\draw (5.center) to (4.center);
	\end{pgfonlayer}
\end{tikzpicture}
\end{aligned}
\quad
=
\quad
\begin{aligned}
\begin{tikzpicture}
	\begin{pgfonlayer}{nodelayer}
		\node [style=none] (0) at (0, 1) {};
		\node [style=none] (1) at (0, -1) {};
	\end{pgfonlayer}
	\begin{pgfonlayer}{edgelayer}
		\draw (1.center) to (0.center);
	\end{pgfonlayer}
\end{tikzpicture}
\end{aligned}
\quad
=
\quad
\begin{aligned}
\begin{tikzpicture}
	\begin{pgfonlayer}{nodelayer}
		\node [style=dot] (0) at (-0.5, 0.5) {};
		\node [style=none] (1) at (-0.5, 0.25) {};
		\node [style=none] (2) at (0.5, 0.25) {};
		\node [style=none] (3) at (0.5, 1) {};
		\node [style=none] (4) at (0, -1) {};
		\node [style=none] (5) at (0, -0.25) {};
	\end{pgfonlayer}
	\begin{pgfonlayer}{edgelayer}
		\draw (0) to (1.center);
		\draw (3.center) to (2.center);
		\draw [in=0, out=-90, looseness=0.75] (2.center) to (5.center);
		\draw [in=180, out=-90, looseness=0.75] (1.center) to (5.center);
		\draw (5.center) to (4.center);
	\end{pgfonlayer}
\end{tikzpicture}
\end{aligned}
\tag{counitality}
\]
and
\[
\begin{aligned}
\begin{tikzpicture}
	\begin{pgfonlayer}{nodelayer}
		\node [style=none] (0) at (0.5, 1) {};
		\node [style=none] (1) at (-0.5, 0.25) {};
		\node [style=none] (2) at (0.5, 0.25) {};
		\node [style=none] (3) at (-0.5, 1) {};
		\node [style=none] (4) at (0, -1) {};
		\node [style=none] (5) at (0, -0.25) {};
	\end{pgfonlayer}
	\begin{pgfonlayer}{edgelayer}
		\draw [in=90, out=-90, looseness=0.75] (0.center) to (1.center);
		\draw [in=90, out=-90, looseness=0.75] (3.center) to (2.center);
		\draw [in=0, out=-90, looseness=0.75] (2.center) to (5.center);
		\draw [in=180, out=-90, looseness=0.75] (1.center) to (5.center);
		\draw (5.center) to (4.center);
	\end{pgfonlayer}
\end{tikzpicture}
\end{aligned}
\quad
=
\quad
\begin{aligned}
\begin{tikzpicture}
	\begin{pgfonlayer}{nodelayer}
		\node [style=none] (0) at (-0.5, 1) {};
		\node [style=none] (1) at (-0.5, 0.25) {};
		\node [style=none] (2) at (0.5, 0.25) {};
		\node [style=none] (3) at (0.5, 1) {};
		\node [style=none] (4) at (0, -1) {};
		\node [style=none] (5) at (0, -0.25) {};
	\end{pgfonlayer}
	\begin{pgfonlayer}{edgelayer}
		\draw (0.center) to (1.center);
		\draw (3.center) to (2.center);
		\draw [in=0, out=-90, looseness=0.75] (2.center) to (5.center);
		\draw [in=180, out=-90, looseness=0.75] (1.center) to (5.center);
		\draw (5.center) to (4.center);
	\end{pgfonlayer}
\end{tikzpicture}
\end{aligned}
\tag{commutativity}
\]

A \emph{commutative comonoid} in a symmetric monoidal category $\c$ is a strong symmetric monoidal functor $\mathbf{Th(CComon)} \to \c$. Abusing our terminology slightly, we will often just say that the image of $M$ under this functor is a commutative comoniod.
\end{defn}

\chapter{Categorical Probability Theory}

Probability theory concerns itself with \emph{random variables}: properties of a system that may take one of a number of possible outcomes, together with a likelihood for each possible observation regarding the property. We call the property itself the \emph{variable}, and together the likelihoods for the observations form a \emph{probability assignment} for the variable. As we will mainly concern ourselves with relationships between random variables, we have particular interest in rules that specify a probability assignment on one variable given a probability assignment on another---this can be seen as the latter variable having some causal influence on the former. 

In this chapter we will develop the standard tools to talk about all these things, but with emphasis on a categorical perspective. These categorical ideas originate with Lawvere \cite{L}, and were extended by Giry in \cite{Gi}. We caution that the terminology we have used for the basic concepts in probability is slightly nonstandard, but predominantly follows that of Pearl \cite{P} and the Bayesian networks community. Although it will not affect the mathematics, we will implicitly take a frequentist view of probability to complement our physical interpretation of causality.

\section{The category of measurable spaces}

The idea of a variable is captured by measurable spaces. These consist of a set $X$, thought of as the set of `outcomes' of the variable, and a collection $\S$ of subsets of $X$ obeying certain closure properties, which represent possible observations about $X$ and which we call the measurable sets of $X$. We then talk of probability assignments on these measurable spaces via a function $P: \S \to [0,1]$ satisfying some consistency properties. While the collection of measurable sets is often taken to be the power set $\mathcal P(X)$ when $X$ is finite, for larger sets some restrictions are usually necessary if one wants to assign interesting collections of probabilities to the space.

Given a measurable set $A$, we think of the number $P(A)$ as the chance that the outcome of the random variable with outcomes represented by $X$ will lie in the subset $A$. As an example, the process of rolling a six-sided die can be described by the measurable space with set of outcomes $X = \{1,2,3,4,5,6\}$ and measurable subsets $\S = \mathcal P(X)$ all subsets of $X$. The statement that the die is fair is then the statement that the probability associated to any subset $A \subseteq X$ is $P(A) = \tfrac16 |A|$. 

We formalise this in the following standard way; more details can be found in \cite{Ash} or \cite{SS}, or indeed any introductory text to probability theory.

\begin{defns}[$\s$-algebra, measurable space]
Given a set $X$, a \emph{$\s$-algebra} $\S$ on $X$ is a set $\S$ of subsets of $X$ that contains the empty set and is closed under both countable union and complementation in $X$. We call a pair $(X,\S)$ consisting of a set $X$ and a $\s$-algebra $\S$ on $X$ a \emph{measurable space}. 
\end{defns}

On occasion we will just write $X$ for the measurable space $(X,\S)$, leaving the $\s$-algebra implicit. In these cases we will write $\S_X$ to mean the $\s$-algebra on $X$.

\begin{ex}[Discrete and indiscrete measurable spaces]
Let $X$ be a set. The power set $\mathcal P(X)$ of $X$ forms a $\s$-algebra, and we call $(X,P(X))$ a \emph{discrete} measurable space. At the other extreme, distinct whenever $X$ has more than one element, is the $\s$-algebra $\{\varnothing, X\}$. In this case we call $(X,\{\varnothing, X\})$ an \emph{indiscrete} measurable space.
\end{ex}

Even beyond the two of the above example, it is not hard to find $\s$-algebras: we may construct one from any collection of subsets. Indeed, we say that the $\s$-algebra $\S(\mathcal G)$ generated by a collection $\mathcal G= \{G_i\}_{i \in I}$ of subsets of a set $X$ is the intersection of all $\s$-algebras on $X$ containing $\mathcal G$. An explicit construction can be given by taking all countable intersections of the sets in $\mathcal G$ and their complements, and then taking all countable unions of the resulting sets. We say that a measurable space is \emph{countably generated} if there exists a countable generating set for it.

\begin{ex}[Borel measurable spaces]
Many frequently used examples of measurable spaces come from topological spaces. The \emph{Borel $\s$-algebra} $\mathcal B_X$ of a topological space $X$ is the $\s$-algebra generated by the collection of open subsets of the space. 
\end{ex}

\begin{ex}[Product measurable spaces] \label{ex:prodsga}
Given measurable spaces $(X,\S_X)$, $(Y,\S_Y)$, we write $\S_X \ot \S_Y$ for the $\s$-algebra on $X \times Y$ generated by the collection subsets $\{A \times B \subseteq X \times Y \mid A \in \S_X, B \in \S_Y\}$. We call this the \emph{product $\s$-algebra} of $\S_X$ and $\S_Y$, and call the resulting measurable space $(X \times Y, \S_X \ot \S_Y)$ the \emph{product measurable space} of $(X,\S_X)$ and $(Y,\S_Y)$. Note that as $(A \times B) \cap (A' \times B') = (A \cap A') \times (B \cap B')$ and $(A \times B)^c = (A \times B^c) \cup (A^c \times B)$, we may write
\[
\S_X \ot \S_Y = \left\{ \bigcup_{i \in I} (A_i \times B_i) \,\bigg| \, A_i \in \S_X, B_i \in \S_Y\right\}.
\] 
\end{ex}

The product measurable space is in fact a categorical product in the category of measurable spaces. To understand this, we first must specify the notion of morphism corresponding to measurable spaces. Just as continuous functions reflect the open sets of a topology, the important notion of map for measurable sets is that of functions that reflect measurable sets. 

\begin{defn}[Measurable function]
A function $f:X \to Y$ between measure spaces $(X,\S_X)$ and $(Y,\S_Y)$ is called \emph{measurable} if for each $A \in \S_Y$, $f\i(A) \in \S_X$.
\end{defn}

We write $\meas$ for the category of measurable spaces and measurable functions. It is easily checked that this indeed forms a category with composition simply composition of functions. 

It is also not difficult to check that the product measurable space $(X \times Y, \S_X \ot \S_Y)$ is the product of the measurable spaces $(X,\S_X)$ and $(Y, \S_Y)$ in this category. As the projection maps $\pi_X: X \times Y \to X$ and $\pi_Y: X \times Y \to Y$ of the set product are measurable maps, it is enough to show that for any measurable space $(Z,\S_Z)$ and pair of measurable functions $f: (Z,\S_Z) \to (X,\S_X)$, and $g: (Z,\S_Z) \to (X, \S_X)$ the unique function $\langle f,g \rangle: Z \to X \times Y$ given by the product in $\set$ is a measurable function. Since for all countable collections $\{A_i \times B_i\}_{i \in I}$ of subsets of $X \times Y$ we have
\[
\langle f,g \rangle\i\bigg(\bigcup_{i \in I}(A_i \times B_i)\bigg) = \bigcup_{i \in I} \langle f,g \rangle\i(A_i \times B_i) = \bigcup_{i \in I}(f\i(A_i) \cap g\i(B_i)),
\]
this is indeed true. 

Note also that any one point set $1= \{\ast\}$ with its only possible $\s$-algebra $\{\varnothing, 1\}$ is a terminal object in $\meas$. We thus may immediately view $\meas$ as a symmetric monoidal category, with the symmetric monoidal structure given by the fact that $\meas$ has finite products. The swaps $\s_{X,Y}: X \times Y \to Y \times X;(x,y) \mapsto (y,x)$ of $\meas$ are the same as those of the symmetric monoidal category $\set$. We shall by default consider $\meas$ as a symmetric monoidal category in this way. 

Similarly, we may also show that the full subcategories $\mathbf{FinMeas}$ and $\mathbf{CGMeas}$ with objects finite measurable spaces and countably generated measurable spaces respectively are also a symmetric monoidal category with monoidal product the categorical product.

\section{Measures and integration}

The reason we deal with measurable spaces is that these form the basic structure required for an object to carry some idea of a probability distribution. More precisely, we deal with measurable spaces because they can be endowed with probability measures. 

\begin{defns}[Measure, measure space]
Given a measurable space $(X,\S)$, a \emph{measure} $\mu$ on $(X,\S)$ is a function $\mu: \S \to \rr_{\ge 0}\cup\{\infty\}$ such that: 
\begin{enumerate}[(i)] 
\item the empty set $\varnothing$ has measure $\mu(\varnothing) = 0$; and 
\item if $\{A_i\}_{i \in I}$ is a countable collection of disjoint measurable sets then $\mu(\cup_{i \in I}A_i) = \sum_{i \in I}\mu(A_i)$. 
\end{enumerate}
Any such triple $(X,\S,\mu)$ is then known as a \emph{measure space}. When $\mu(X) =1$, we further call $\mu$ a \emph{probability measure}, and $(X,\S,\mu)$ a \emph{probability space}.
\end{defns}

We will have to pay close attention to the properties of the collections, in fact $\s$-ideals, of sets of measure zero of probability spaces in the following. These represent possible observations of our random variable that nonetheless are `never' observed, giving us very little information about their causal consequences. Very often we will pronounce functions equal `almost everywhere' if they agree but for a set of a measure zero. More generally, we say a property with respect to a measure space is true \emph{almost everywhere} or for \emph{almost all} values if it holds except on a set of measure zero. We also say that a measure space is of \emph{full support} if its only subset of measure zero is the empty set $\varnothing$. Such spaces are necessarily countable measure spaces.

\begin{ex}[Finite and countable measurable spaces] \label{ex:fms}
We shall say that a measurable space $(X, \S)$ is a \emph{finite measurable space} if $\S$ is a finite set. In this case there exists a finite generating set $\{A_1, \dots, A_n\}$ for $\S$ consisting of pairwise disjoint subsets of $X$, and measures $\mu$ on $(X,\S)$ are in one-to-one correspondence with to functions $m: \{A_1, \dots, A_n\} \to \rr_{\ge 0}$, with $\mu(A) = \sum_{A_i \subseteq A} m(A_i)$ for all measurable subsets $A$ of $X$. Measures may thus also be thought of as vectors with non-negative entries in $\rr^{n}$, with probability measures those vectors whose entries also sum to 1. We may similarly define \emph{countable measurable spaces}, and note that measures on these spaces are in one-to-one correspondence with functions $m: \mathbb{N} \to \rr_{\ge 0}$.

Writing $n$ for some chosen set with $n \in \mathbb{N}$ elements, note that this suggests each finite measurable space is in some sense `isomorphic' to $n= (n,\mathcal P(n))$ for some $n$. Although this is not true in $\meas$, we will work towards constructing a category in which this is true.
\end{ex}

We give two more useful examples of measures.

\begin{ex}[Borel measures, Lebesgue measure]
A \emph{Borel measure} is a measure on a Borel measurable space. An important collection of examples of these are the Lebesgue measures on $(\rr^n,\mathcal B_{\rr^n})$. These may be characterised as the unique Borel measure on $\rr^n$ such that the measure of each closed $n$-dimensional cube is given by its $n$-dimensional volume. See any basic text on measure theory, such as \cite[Chapter 1]{SS}, for more details.

When speaking of $\rr$ as a measure space, we will mean $\rr$ with its Borel $\s$-algebra and Lebesgue measure. In particular, when referring to a real-valued measurable function, we shall take the codomain as having this structure.
\end{ex}

\begin{ex}[Product measures]
Given measure spaces $(X,\S_X,\mu)$ and $(Y,\S_Y,\nu)$, we may define the \emph{product measure} $\mu \times \nu$ on the product measurable space $(X \times Y, \S_X \ot \S_Y)$ as the unique measure on this space such that for all $A \in \S_X$ and $B \in \S_Y$,
\[
\mu\times\nu (A \times B) = \mu(A)\nu(B).
\]
A proof of the existence and uniqueness of such a measure can be found in \cite[Theorem 6.1.5]{SS}.
\end{ex}

One way in which measures interact with measurable functions is that measures may be `pushed forward' from the domain to the codomain of a measurable map. 

\begin{defn}[Push-forward measure]
Let $(X,\S_X,\mu)$ measure space, $(Y,\S_Y)$ measurable space, and $f:X \to Y$ be a measurable function. We then define the \emph{push-forward measure $\mu_f$ of $\mu$ along $f$} to be the map $\S_Y \to \rr$ given by
\[
\mu_f(B) = \mu(f\i(B)).
\]
Note that $\mu_f(Y) = \mu(f\i(Y)) = \mu(X)$, so the push-forward of a probability measure is again a probability measure. 
\end{defn}

As causality concerns the relationships between random variables, we shall be particularly interested in measures on product spaces, so-called \emph{joint measures}. An important example of a push-forward measure is that of the \emph{marginals} of a joint measure. These are the push-forward measures of a joint measure along the projections of the product space: given a joint measure space $(X \times Y, \S_X \ot \S_Y, \mu)$ with projections $\pi_X: X \times Y \to X$ and $\pi_Y: X \times Y \to Y$, we define the marginal $\mu_X$ of $\mu$ on $X$ to be the push forward measure of $\mu$ along $\pi_X$, and similarly for $\mu_Y$. We also say that we have \emph{marginalised over $Y$} when constructing the marginal $\mu_X$ from the measure $\mu$. Note that the marginals of a joint probability measure are again probability measures.

Observe that for each point $x$ in its domain, a measurable function $f:(X,\S_X) \to (Y, \S_Y)$ induces a `point measure' 
\[
\d_{f,x}(B) = \begin{cases} 1 & \textrm{if } f(x) \in B, \\ 0 & \textrm{if } f(x) \notin B, \end{cases}
\]
on its codomain $(Y,\S_Y)$. From this point of view, the push-forward measure of some measure $\mu$ on $(X,\S_X)$ along $f$ can be seen as taking the `$\mu$-weighted average' or `expected value' of these induced point measures on $(Y,\S_Y)$. More precisely, the push-forward measure may be defined as the integral of these point measures with respect to $\mu$.\footnote{A complementary perspective views the integral in terms of push-forwards, but only once we have define the standard notion of multiplying functions with measures to produce a new measure. Indeed, given a bounded real-valued measurable function $f$ and a measure $\mu$ on a measurable space $(X,\S)$, this new measure $f\mu$ is equal to $\int_A f \,d\mu$ on each $A \in \S$, and this allows us to see the integral $\int_A f \,d\mu$ as the value, on the set $\ast$, of the push-forward measure of the measure $\chi_Af\mu$ along the unique map $X \to \ast$ to the terminal object.}

For the sake of completeness, we quickly review the definition of the integral for bounded real-valued measurable functions nonzero on a set of finite measure, but the reader is referred to Ash \cite[\textsection1.5]{Ash} or Stein and Shakarchi \cite[Chapter 2]{SS} for full detail. 

We first define the integral of simple functions. Let $(X,\S,\mu)$ be a measure space, and let $A$ be subset of $X$. We write $\chi_A: X \to \rr$ for the characteristic function 
\[
\chi(x) = \begin{cases} 1 & \textrm{if } x \in A; \\ 0 & \textrm{if } x \notin A, \end{cases}
\]
and call a weighted sum $\varphi = \sum_{k=1}^N c_k \chi_{A_k}$ of characteristic functions of measurable sets a \emph{simple function}. The integral $\int_A \varphi \,d\mu$ of a simple function $\varphi$ over the measurable set $A$ with respect to $\mu$ is defined to be
\[
\int_A \varphi \,d\mu = \sum_{k=1}^N c_k \mu(A_k \cap A)
\]
when this sum is finite. Note that this implies that the integral over $X$ of the characteristic function of a measurable set $A$ is just $\mu(A)$.

Let now $f$ be a bounded real-valued measurable function such that the set $\{x \in X \mid f(x) \ne 0\}$ is of finite measure. It can be shown there then exists a uniformly bounded sequence $\{\varphi_n\}_{n\in \mathbb N}$ of simple functions supported on the support of $f$ and converging to $f$ for almost all $x$. Using this sequence, we define the integral $\int_Af\,d\mu$ of $f$ over $A$ with respect to $\mu$ to be
\[
\int_A f \, d\mu = \lim_{n \to \infty} \int_A \varphi_n \, d\mu.
\]
By our assumptions, this limit always exists, is finite, and is independent of the sequence $\{\varphi_n\}_{n\in \mathbb N}$. Where we do not write the domain of integration $A$, we mean that the integral is taken over the entire domain $X$ of $f$.

We will not discuss the technicalities of the integral further, but instead note that in the case of Lebesgue measure the notion of integration agrees with that of Riemann integration, and for finite measure spaces it can be viewed as analogous to matrix multiplication---this will be explained fully in the following section. Our examples will be limited to these cases.

More generally, this idea of averaging measures will play a crucial role in how we reason about consequences of causal relationships. As an illustration, suppose that we have measurable spaces $C$ and $R$, representing say cloud cover and rain on a given day respectively, and for each value of cloud cover---that is, each measurable set in $C$---we are given the probability of rain. We will assume this forms a real-valued measurable function $f$ on $C$. If we are further given a measure $\mu$ on $C$ representing how cloudy a day is likely to be, we can `average' over this measure to give a probability of rain on that day. This averaging process is given by the integral of $f$ with respect to $\mu$. 

Implicitly here we are talking about conditional probabilities---for each outcome of the space $C$ we get a measure on $R$. This idea will form our main idea of map between measurable spaces.

\section{The category of stochastic maps}

Measurable functions describe a deterministic relationship between two variables: if one fixes an outcome of the domain variable, a measurable function specifies a unique corresponding outcome for the codomain. When describing a more stochastic world, such as that given by a Markov chain, such certainty is often out of reach. In these cases stochastic maps---variously also called stochastic kernels, Markov kernels, conditional probabilities, or probabilistic mappings---may often be useful instead. These are more general, mapping outcomes of the domain to probability measures on, instead of points of, the codomain. 

\begin{defn}[Stochastic map]
Let $(X,\S_X)$ and $(Y,\S_Y)$ be measurable spaces. A \emph{stochastic map} $k: (X, \S_X) \to (Y, \S_Y)$ is a function
\ba
k(x,B): X \times \S_Y \lra [0,1]
\ea
such that 
\begin{enumerate}[(i)]
\item for each $x \in X$ the function $k_x := k(x,-): \S_Y \to [0,1]$ is a probability measure on $Y$; and 
\item for each measurable set $B \subseteq Y$ the function $k_B:= k(-,B): X \to [0,1]$ is measurable.
\end{enumerate} 
The \emph{composite of stochastic maps}
\[
\ell \circ k: (X,\S_X) \stackrel{k}\longrightarrow (Y,\S_Y) \stackrel{\ell}\longrightarrow (Z,\S_Z)
\]
is defined by the integral
\[
\ell \circ k(x,C) = \int_Y \ell(-,C) \,dk_x,
\]
where $x \in X$ and $C \in \S_Z$. That this is a well-defined stochastic map follows immediately from the basic properties of the integral.
\end{defn}

Note that these definitions are those suggested by our discussion at the close of the previous section: put more succinctly, a stochastic map is a measure-valued function (subject to a measurability requirement), and the composite $\ell \circ k(x,C)$ of stochastic maps $\ell$ and $k$ is given by integrating the measures $\ell_y$ on the codomain $(Z,\S_Z)$ with respect to the measure $k_x$ on the intermediate variable $(Y, \S_Y)$. 

We give a few examples.

\begin{ex}[Probability measures as stochastic maps] \label{ex:pmasm}
Observe that a stochastic map $k: 1 \to (X,\S)$ is simply a probability measure on $(X,\S)$. 
\end{ex}

\begin{ex}[Deterministic stochastic maps] \label{ex:dsm}
In the previous section we discussed how a measurable function  $f: (X,\S_X) \to (Y,\S_Y)$ induces `point measures' on its codomain. We can now interpret these as defining the stochastic map $\d_f: (X,\S_X) \to (Y,\S_Y)$ given by
\ba
\d_f: X\times \S_Y &\longrightarrow [0,1]; \\
(x,B) &\longmapsto \d_{f,x}(B)= \begin{cases} 1 & \textrm{if } f(x) \in B; \\ 0 & \textrm{if } f(x) \notin B. \end{cases}
\ea
We call this the \emph{deterministic stochastic map induced by $f$}. More generally, we call any stochastic map taking values in only in the set $\{0,1\}$ a \emph{deterministic stochastic map}.

Observe that given measurable functions $f: (X,\S_X) \to (Y,\S_Y)$ and $g: (Y,\S_Y) \to (Z, \S_Z)$, the composite of their induced maps is given by
\[
\d_g \circ \d_f(x,C) = \int_Y \d_g(-,C)\, d\d_{f,x} = \d_{f,x}(g\i(C))= \begin{cases} 1 & \textrm{if } g\circ f(x) \in C; \\ 0 & \textrm{if } g\circ f(x) \notin C, \end{cases}
\]
where $x \in X$ and $C \in \S_Z$. Thus $\d_g \circ \d_f = \d_{g \circ f}$. 

More generally, for a stochastic map $k$ and measurable function $f$ of the types required for composition to be well-defined, we have $k \circ \d_f(x,B) = k(f(x),B)$, and $\d_f \circ k(x,B) = k(x,f\i(B))$.
\end{ex}

\begin{ex}[Stochastic matrices] \label{ex:fms2}
Let $X$ and $Y$ be finite measurable spaces of cardinality $n$ and $m$ respectively. Note that if $x,x' \in X$ are such that $x$ lies in a measurable set $A$ if and only if $x'$ does, then for any stochastic map $k: X \to Y$ the measurability of $k_B$ for each $B \in \S_Y$ implies the measures $k_x$ and $k_{x'}$ must be equal. Thus, with reference to Example \ref{ex:fms}, we may assume without loss of generality $X$ and $Y$ are discrete. Then, observing that all maps with discrete domain are measurable and recalling that probability distributions on a finite discrete measurable space $m$ may be considered as vectors in $\rr^m$ with non-negative entries that sum to one, we see that stochastic maps $k: X \to Y$ may be considered as $m \times n$ matrices $K$ with non-negative entries and columns summing to one. Indeed, the correspondence is given by having the $yx$th entry $K_{y,x}$ of $K$ equal to $k(x,\{y\})$ for all $x \in X$ and $y \in Y$. We call such matrices---matrices with entries in $[0,1]$ and columns summing to 1---\emph{stochastic matrices}.

Let also $Z$ be a discrete finite measurable space, and let $\ell: Y \to Z$ be a stochastic map, with corresponding stochastic matrix $L$. Then for all $x \in X$ and $z \in Z$, we have
\[
\ell \circ k(x,\{z\}) = \int_Y \ell(-,\{z\}) \,dk_x = \sum_{y \in Y} \ell(y,\{z\})k(x,\{y\}),
\]
and writing this in matrix notation then gives
\[
\ell \circ k(x,\{z\}) = \sum_{y \in Y} L_{z,y}K_{y,x} = (LK)_{z,x}.
\]
Thus our representation of finite stochastic maps as stochastic matrices respects composition. This hints at an equivalence of categories.
\end{ex}

We are now in a position to define the main category of interest: let the category of stochastic maps, denoted $\stoch$, be the category with objects measurable spaces and morphisms stochastic maps. It is straightforward to show this is a well-defined category. In particular, the associativity of the composition rule follows directly from the monotone convergence theorem \cite[Theorem 1]{Gi}, and for each object $(X,\S)$ of $\stoch$ the delta function $\d: (X,\S) \to (X,\S)$ defined by
\[
\d(x,A) = \begin{cases} 1 & \textrm{if } x \in A; \\ 0 & \textrm{if } x \notin A, \end{cases}
\]
---that is, the deterministic stochastic map induced by the identity function on $X$---is the identity map.

Viewed with this new category, Example \ref{ex:dsm} defines a functor $\d: \meas \to \stoch$. In fact, we may further endow $\stoch$ with a symmetric monoidal structure such that this is a symmetric monoidal functor. For this we take the product of two objects to be their product measurable space, and the product 
\[
k \ot \ell: (X \times Z, \S_X \ot \S_Z) \to (Y\times W, \S_Y \ot \S_W)
\]
of two stochastic maps $k: (X,\S_X) \to (Y,\S_Y)$ and $\ell: (Z,\S_Z) \to (W,\S_W)$ to be the unique stochastic map extending
\[
k \ot \ell \big((x,z),B \times D\big) = k(x,B)\ell(z,D),
\]
where $x \in X$, $z \in Z$, $B \in \S_Y$ and $D \in \S_W$. This assigns to each pair $(x,z)$ the product measure of $k_x$ and $\ell_z$ on $Y \times W$, and indeed results in a well-defined functor $\ot: \stoch \times \stoch \to \stoch$. Using as structural maps the induced deterministic stochastic maps of the corresponding structural maps in $\meas$ then gives $\stoch$ the promised symmetric monoidal structure.

\begin{remark}
Observe that any indiscrete $\s$-algebra $(\ast,\{\varnothing,\ast\})$ is a terminal object in $\stoch$: from any other measurable space $(X,\S)$ there only exists the map 
\[
t(x,B) = \begin{cases} 1 & \textrm{if } B = \ast; \\ 0 & \textrm{if } B = \varnothing. \end{cases}
\] 
Example \ref{ex:pmasm} thus shows that the points of an object of $\stoch$ are precisely the probability measures on that space.
\end{remark}

While $\stoch$ has a straightforward definition and interpretation, the generality of the concept of a $\s$-algebra means that $\stoch$ admits a few pathological examples that indicate it includes more than what we want to capture. For this reason, and for the clarity that simpler cases can bring, we will mostly work with two full subcategories of $\stoch$. The first is $\finstoch$, the category of finite measurable spaces and stochastic maps. Building on Example \ref{ex:fms2}, and as promised in Example \ref{ex:fms}, this is monoidally equivalent to the skeletal symmetric monoidal category $\mathbf{SMat}$ with objects natural numbers and morphisms stochastic matrices. As categories of vector spaces are well studied, this characterisation gives much insight into the structure of $\finstoch$.

The main disadvantage of $\finstoch$ is that many random variables are not finite. One category admitting infinite measure spaces---and used by Giry \cite{Gi}, Panangaden \cite{Pa}, and Doberkat \cite{Do}, among others---is the category of standard Borel spaces,\footnote{A measurable space is a standard Borel space if it is the Borel measurable space of some Polish space. A topological space is a Polish space if it is the underlying topological space of some complete separable metric space. This category then has objects standard Borel spaces and morphisms stochastic maps between them.} which can be skeletalised as the countable measurable spaces and the unit interval with its Borel $\s$-algebra. We will favour the less frequently used but slightly more general category $\cgst$, the full subcategory of $\stoch$ obtained by restricting the objects to the countably generated measurable spaces. This setting is general enough to handle almost all examples of probability spaces that arise in applications, but has a few nice properties that $\stoch$ does not. In the next section we see one of them: the deterministic stochastic maps here are precisely those that arise from measurable functions.

\section{Deterministic stochastic maps}

Recall that the deterministic stochastic maps are those that take only the values $0$ and $1$. These will play a crucial role in maps between collections of causally related random variables. The key reason for this is that these maps show much more respect for the structure of the measurable spaces than general stochastic maps. For example, for a stochastic map to be an isomorphism in $\stoch$, it must be deterministic.

\begin{prop}
Let $k: (X, \S_X) \to (Y,\S_Y)$ be an isomorphism in $\stoch$. Then $k$ is deterministic.
\end{prop}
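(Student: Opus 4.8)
The plan is to argue by contradiction, exploiting the fact that a two-sided inverse $k\i$ lets us write the point mass $\delta_{x_0}$ both as an average of the measures $k\i_y$ and, after conditioning on a single event and pushing back through $k$, as a measure that only sees that event. Throughout I write $k_x = k(x,-)$ and $k\i_y = k\i(y,-)$ for the probability measures they determine, and recall that the identities in $\stoch$ are the delta maps, so the inverse relations are the honest equalities of functions $k\i \circ k = \delta$ on $X \times \S_X$ and $k \circ k\i = \delta$ on $Y \times \S_Y$. Fixing $x_0 \in X$ and unpacking $k\i \circ k = \delta$ at $x_0$ gives, for every $A \in \S_X$,
\[
\int_Y k\i(y,A)\,dk_{x_0}(y) = \delta(x_0,A) = \begin{cases} 1 & x_0 \in A,\\ 0 & x_0 \notin A.\end{cases}
\]
Since each integrand lies in $[0,1]$ and $k_{x_0}$ is a probability measure, this forces $k\i(-,A) = 0$ for $k_{x_0}$-almost all $y$ when $x_0 \notin A$, and $k\i(-,A) = 1$ for $k_{x_0}$-almost all $y$ when $x_0 \in A$.

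Suppose now, for contradiction, that $k$ is not deterministic, so there are $x_0$ and $B_0 \in \S_Y$ with $0 < k(x_0,B_0) =: p < 1$. I would use $B_0$ to condition $k\i$, defining $\mu_1 \colon \S_X \to [0,1]$ by
\[
\mu_1(A) = \frac1p \int_{B_0} k\i(y,A)\,dk_{x_0}(y).
\]
A routine check (nonnegativity, countable additivity by monotone convergence, and $\mu_1(X) = \tfrac1p k_{x_0}(B_0) = 1$) shows $\mu_1$ is a probability measure on $X$, that is, a point $1 \to X$ in $\stoch$ (Example~\ref{ex:pmasm}). The almost-everywhere statements above then show $\mu_1 = \delta_{x_0}$: when $x_0 \notin A$ the integrand vanishes $k_{x_0}$-a.e. so $\mu_1(A) = 0$, while when $x_0 \in A$ the integrand equals $1$ $k_{x_0}$-a.e. so $\mu_1(A) = \tfrac1p k_{x_0}(B_0) = 1$.

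Finally I would push $\mu_1$ forward through $k$ and read off a contradiction. On one hand $k \circ \mu_1 = k \circ \delta_{x_0} = k_{x_0}$. On the other hand, interchanging the order of integration and using $k \circ k\i = \delta$,
\[
(k \circ \mu_1)(B) = \frac1p \int_{B_0}\Big(\int_X k(x,B)\,dk\i_y(x)\Big)dk_{x_0}(y) = \frac1p \int_{B_0}\delta(y,B)\,dk_{x_0}(y) = \frac1p\, k_{x_0}(B_0 \cap B).
\]
Equating the two expressions yields $k_{x_0}(B) = \tfrac1p k_{x_0}(B_0 \cap B)$ for all $B \in \S_Y$; taking $B = Y \setminus B_0$ gives $1 - p = 0$, contradicting $p < 1$. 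Hence no such $x_0, B_0$ exist, and $k$ takes only the values $0$ and $1$.

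I expect the main obstacle to be justifying the interchange of integration in the last display: this is exactly the Fubini-type identity underlying the well-definedness and associativity of composition in $\stoch$, and I would establish it by checking it first for indicator integrands $k(-,B) = \chi$, then for simple functions by linearity and the definition of $\mu_1$, and finally for the bounded measurable function $x \mapsto k(x,B)$ by monotone convergence. Everything else is bookkeeping; I would emphasise that the argument never needs $\S_X$ or $\S_Y$ to be countably generated, so it holds throughout $\stoch$.
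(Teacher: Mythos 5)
Your proof is correct, and it takes a genuinely different route from the one in the paper. The paper argues directly: writing $h$ for the inverse and fixing $B \in \S_Y$, it sets $A = k_B\i\{1\}$, uses $k \circ h = \delta$ together with the fact that $\int f\,d\mu = 1$ for a $[0,1]$-valued $f$ forces $\mu(f\i\{1\}) = 1$ to conclude $B \subseteq h_A\i\{1\}$, and then for $x \notin A$ uses $h \circ k = \delta$ to get $0 = \int h(-,A)\,dk_x \ge k_x(h_A\i\{1\}) \ge k_x(B)$, so $k(x,B)=0$. You instead argue by contradiction, building the conditional measure $\mu_1 = \tfrac1p\int_{B_0} k\i_y\,dk_{x_0}(y)$, identifying it with the point mass at $x_0$ via the same almost-everywhere rigidity of $[0,1]$-valued integrands, and then pushing forward through $k$ two ways to force $p=1$. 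Both arguments rest on the same elementary measure-theoretic fact and both use the two inverse identities once each, and both are valid in all of $\stoch$ without countable generation, as you note. What the paper's version buys is economy: it needs no Fubini-type interchange and no auxiliary measure, just a single chain of inequalities. What yours buys is a more quantitative and arguably more conceptual picture --- it exhibits exactly how a nondeterministic value $0<p<1$ would let one "split" the point mass $\delta_{x_0}$ into a piece supported on $B_0$, which is incompatible with invertibility --- at the cost of having to justify the interchange of integration, which, as you correctly observe, is precisely the machinery already needed for associativity of composition in $\stoch$ and is handled by the usual indicator--simple--limit escalation.
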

\begin{proof}
Our argument rests on the fact that if $f: X \to [0,1]$ is a measurable function on a probability space $(X,\S,\mu)$ such that $\int f\,d\mu =1$, then $\mu(f\i\{1\}) = 1$. 

Write $h$ for the inverse stochastic map to $k$, and fix $B \in \S_Y$. We begin by defining $A =k_B\i\{1\}$, where we remind the reader that $k_B$ is the measurable function $k(-,B): X \to [0,1]$. Note that we then have $B \subseteq h_A\i\{1\}$, since for any $y \in B$ that $h$ is the inverse to $k$ gives $\int k(-,B)\,dh_y =1$, so by the above fact $h_y(A) = 1$, and hence $y \in h_A\i\{1\}$. 

It is enough to show that for any $x \in X$, $k(x,B) = 0$ or $1$. If $x \in A$ we are done: by definition then $k(x,B) =1$. Suppose otherwise. Then, again as $h$ and $k$ are inverses, $\int h(-,A) \,dk_x = 0$. But 
\[
\int h(-,A)\,dk_x \ge k_x\big(h_A\i\{1\}\big) \ge k_x(B).
\]
Thus $k(x,B) =0$, as required.
\end{proof}

In the previous section, we showed that every measurable function induces a deterministic stochastic map. One of the reasons that we prefer to work with countably generated measurable spaces is that in $\cgst$ the converse is also true.

\begin{prop} \label{prop.detfunc}
Let $(Y, \S_Y)$ be a measurable space with $\S_Y$ countably generated. Then a stochastic map $k:X \to Y$ is deterministic if and only if there exists a measurable function $f:X \to Y$ with $k = \d_f$.
\end{prop}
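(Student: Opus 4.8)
The easy direction needs no hypothesis on $\S_Y$: if $k = \d_f$ for a measurable $f$, then by construction $\d_f(x,B) = \d_{f,x}(B) \in \{0,1\}$ for every $x \in X$ and $B \in \S_Y$, so $k$ is deterministic. For the converse I would assume $k$ deterministic, so that for each $x$ the probability measure $k_x = k(x,-)$ takes only the values $0$ and $1$, and show that countable generation forces each such two-valued measure to be a point mass. Fix a countable generating set $\{G_n\}_{n \in \nn}$ for $\S_Y$. For each $x$, exactly one of $G_n$, $G_n^c$ has $k_x$-measure $1$; call it $H_n^x$, and set $A_x = \bigcap_n H_n^x$. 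Since $A_x^c = \bigcup_n (H_n^x)^c$ is a countable union of $k_x$-null sets, countable additivity gives $k_x(A_x) = 1$; in particular $A_x \ne \varnothing$.

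The crux is to show $A_x$ is an atom of $\S_Y$, meaning that every $B \in \S_Y$ satisfies either $A_x \subseteq B$ or $A_x \cap B = \varnothing$. I would argue this by the standard device: the collection of $B$ enjoying this property contains each generator $G_n$ (because $A_x \subseteq H_n^x$ sits inside $G_n$ or inside $G_n^c$) and is closed under complementation and countable union, hence is a $\s$-algebra containing the generators and so equals $\S_Y$. Granting atomicity, I would pick any $y \in A_x$ and set $f(x) = y$. For arbitrary $B$, atomicity gives $A_x \subseteq B$ precisely when $y \in B$, while $A_x \subseteq B$ forces $k_x(B) = 1$ and $A_x \cap B = \varnothing$ forces $k_x(B) = 0$; hence $k_x(B) = 1$ iff $f(x) \in B$, that is $k_x = \d_{f(x)}$, and therefore $k = \d_f$.

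It then remains to check $f$ is measurable. For each generator, $f\i(G_n) = \{x : f(x) \in G_n\} = \{x : k(x,G_n) = 1\} = k_{G_n}\i(\{1\})$, which is measurable because $k_{G_n} = k(-,G_n)$ is a measurable function into $[0,1]$ and $\{1\}$ is Borel. Since the sets $B$ with $f\i(B) \in \S_X$ form a $\s$-algebra containing every $G_n$, they exhaust $\S_Y$, so $f$ is measurable. The main obstacle, and the only point where countable generation is used, is the atom step: without a countable generating set a two-valued probability measure need not concentrate at a single point (such measures correspond to $\s$-complete free ultrafilters), and the countable intersection $A_x$ is exactly the construction that pins the measure down to one.
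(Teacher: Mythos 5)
Your proposal is correct and follows essentially the same route as the paper: intersect the countably many measure-one sets determined by the generators to obtain a nonempty $k_x$-full set, choose a point from it (via the axiom of choice) to define $f$, and deduce measurability of $f$ from the measurability of each $k_B$. You supply the details the paper leaves as "easily checked" — in particular the atomicity argument showing $k_x = \d_{f(x)}$ on all of $\S_Y$, and the inclusion of the complements $G_n^c$ in the intersection, which makes the sketch fully watertight.
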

\begin{proof}
A proof can be found in \cite[Proposition 2.1]{CuS}, but we outline a version here to demonstrate the use of the countable generating set, and point out that we assume the axiom of choice.

We have seen that measurable functions induce deterministic maps. For the converse, let $\mathcal G$ be a countable generating set for $\S_Y$. Now for each $x \in X$ let $B_x =\bigcap_{\{B \in \mathcal G\,\mid\, k(x,B) =1\}} B$. This is a measurable set as $\mathcal G$ is countable, and has $k_x$-measure 1 as its complement may be written as a countable union of sets of $k_x$-measure zero. Choosing then for each $x$ some $y \in B_x$, we define $f$ such that $f(x) =y$. It is then easily checked that $k = \d_f$, and $f$ is measurable as each $k_B$ is.
\end{proof}

\begin{remark}
On the other hand, one need not look too hard for a deterministic stochastic map that is not induced by a measurable function when dealing with non-countably generated measurable spaces.\footnote{For fun, we note that if we further add the requirement that every subset of our codomain be measurable, then we \emph{do} need to look quite hard. We say that a cardinal is a measurable cardinal if there exists a countably-additive two-valued measure on its power set such that it has measure 1 and each point has measure 0. If we are looking for such measures, then our set has to be a strongly inaccessible cardinal \cite{Ulam}. These are truly huge; in some models of set theory they're too huge to exist!} Indeed, take any uncountable set $X$, and endow it with the $\s$-algebra generated by the points of $X$. This means that set is measurable if and only if it or its complement is countable. It is then easily checked that assigning countable sets measure 0 and uncountable measurable sets measure 1 defines a measure. This gives a deterministic stochastic map from the terminal object to $X$ not induced by any measurable function.
\end{remark}

\begin{remark}
Note that the measurable function specifying a deterministic stochastic map need not be unique, so we should not view the deterministic stochastic maps as merely the collection of measurable maps lying inside $\cgst$. As an example of this, consider the one point measurable space $(\ast,\{\varnothing, \ast\})$ and any other indiscrete measurable space $(X,\S)$. Then all of the $|X|$ functions $f: \ast \to X$ are measurable, and all induce the deterministic stochastic map $\d_f(\ast,\varnothing) = 0$; $\d_f(\ast,X) =1$. In this way $\stoch$ captures the intuition that every indiscrete measuarable space is the same. 

In particular, non-bijective measurable endofunctions can induce the identity stochastic map, so measurable spaces may be isomorphic in $\stoch$ even if they are not isomorphic in $\meas$. This lets $\finstoch$ admit the skeletalisation $\mathbf{SMat}$, even while the classification of isomorphic objects in $\mathbf{FinMeas}$ is not nearly so neat.

More abstractly, this shows that although our symmetric monoidal functor $\d: \meas \to \stoch$ is injective on objects, it is not faithful, and so we can not view $\meas$ as a subcategory of $\stoch$.
\end{remark}

Although inducing deterministic stochastic maps from measurable functions is in general a many-to-one process, we may always take quotients of our measurable spaces so it becomes one-to-one. We briefly explore this idea in order to further our understanding of deterministic stochastic maps. 

Call two outcomes of a measurable space \emph{distinguishable} if there exists a measurable subset containing one but not the other, and \emph{indistinguishable} otherwise. Indistinguishability gives an equivalence relation on the outcomes of a measure space. We may take a quotient by this equivalence relation, and use the quotient map to induce a $\s$-algebra on the quotient set, defining a set in the quotient to be measurable if its preimage is. In this quotient space all outcomes are distinguishable; we call this an \emph{empirical measurable space}. The quotient map in fact induces an isomorphism in $\stoch$.

We call a deterministic monomorphism in $\stoch$ an \emph{embedding} of measurable spaces. These are deterministic stochastic maps induced by injective measurable functions on the empiricisations, and so may be thought of as maps that realise a measurable space as isomorphic to a sub-measurable space of another. We call an epimorphism in $\stoch$ a \emph{coarse graining} of measurable spaces. These are deterministic stochastic maps induced by surjective measurable functions on the emipiricisations, and so may be thought of as maps that remove the ability to distinguish between some outcomes of the domain.

The following proposition then gives a precise understanding of deterministic stochastic maps in $\cgst$. 

\begin{prop} \label{pr.dsms}
In $\cgst$, every deterministic stochastic map may be factored as a coarse graining followed by an embedding. 
\end{prop}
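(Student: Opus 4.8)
The plan is to reduce the statement to the ordinary image factorization of a measurable function and then transport it into $\stoch$ via the functor $\d$. First I would invoke Proposition \ref{prop.detfunc}: since $Y$ is countably generated, the deterministic map $k$ has the form $k = \d_f$ for some measurable function $f : X \to Y$. This turns a problem about stochastic maps into one about an honest function, where I can use the familiar fact that $f$ factors as its corestriction onto its image followed by the inclusion of that image.

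Concretely, I would take $f(X) \subseteq Y$ with the trace (subspace) $\s$-algebra $\{A \cap f(X) \mid A \in \S_Y\}$, write $\pi : X \to f(X)$ for the surjective corestriction of $f$ and $\iota : f(X) \hookrightarrow Y$ for the injective inclusion, so that $f = \iota \circ \pi$. Two routine points then keep us inside $\cgst$ with composable maps: (i) the trace $\s$-algebra is countably generated, since if $\mathcal G$ is a countable generating set for $\S_Y$ then $\{G \cap f(X) \mid G \in \mathcal G\}$ generates it, whence $f(X) \in \cgst$; and (ii) both $\pi$ and $\iota$ are measurable, the latter by definition of the trace $\s$-algebra and the former because $\pi\i(G \cap f(X)) = f\i(G) \in \S_X$. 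Functoriality of $\d$ (established in Example \ref{ex:dsm}, where $\d_g \circ \d_f = \d_{g \circ f}$) then gives $\d_\iota \circ \d_\pi = \d_{\iota \circ \pi} = \d_f = k$, a genuine factorization in $\cgst$.

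It remains to identify $\d_\pi$ as a coarse graining and $\d_\iota$ as an embedding. By the characterizations established just above the proposition, this amounts to checking that $\pi$ is surjective and $\iota$ injective after passing to empiricisations. Since $\pi$ is surjective as a function, the induced map on indistinguishability quotients is again surjective, so $\d_\pi$ is an epimorphism; and since $\iota$ is injective while a trace-measurable subset separates two points of $f(X)$ exactly when some measurable subset of $Y$ separates them, $\iota$ preserves and reflects distinguishability and hence stays injective on empiricisations, so $\d_\iota$ is a monomorphism.

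The main obstacle---really the only nonroutine point---is this last reconciliation between the set-theoretic injectivity and surjectivity of $\pi$ and $\iota$ and the categorical mono and epi conditions through empiricisation: one must verify that the distinguishability relation on $f(X)$ is precisely the restriction of the one on $Y$, so that $\iota$ neither collapses distinguishable points nor manufactures new ones, and that forming the image interacts correctly with the quotient. Everything else---countable generation of the trace $\s$-algebra, measurability of $\pi$ and $\iota$, and the composition identity---is immediate from the results already in hand.
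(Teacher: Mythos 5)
Your proof is correct and takes essentially the same route as the paper's: reduce to a measurable function via Proposition \ref{prop.detfunc} and apply the surjection--injection factorisation of that function through its image. The paper simply passes to empirical spaces at the outset, which makes the compatibility checks you carry out at the end (the trace $\s$-algebra and the distinguishability relation on $f(X)$) immediate.
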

\begin{proof}
We may without loss of generality assume spaces are empirical. Then we may treat the deterministic stochastic maps as functions, and we know that each function factors into a surjection followed by an injection.
\end{proof}

\section{Aside: the Giry monad}

To shed further light on the close relationship between $\meas$ and $\stoch$, we mention a few results that first stated in \cite{L}, and proved in \cite{Gi}. The main observation is that $\stoch$ forms a relation-like version of $\meas$. More precisely, we observe that just as $\rel$ is the Kleisli category for the power set monad on $\set$, $\stoch$ is the Kleisli category for Giry monad on $\meas$. 

Recall that a monad on a category $\c$ consists of a functor $T: \c \to \c$ and natural transformations $\eta: 1_\c \Rightarrow T$ and $\mu: T^2 \Rightarrow T$ such that for all objects $X \in \c$ the diagrams
\[
\xymatrixcolsep{3pc}
\xymatrixrowsep{3pc}
\begin{aligned}
\xymatrix{
T(T(T(X))) \ar[d]_{\mu_{T(X)}} \ar[r]^{T(\mu_X)} & T(T(X)) \ar[d]^{\mu_X} \\
T(T(X)) \ar[r]_{\mu_X} & T(X) 
}
\end{aligned}
\qquad
\textrm{and}
\qquad
\begin{aligned}
\xymatrix{
T(X) \ar[d]_{T(\eta_X)} \ar[r]^{\eta_{T(X)}} \ar@{=}[rd] & T(T(X)) \ar[d]^{\mu_X} \\
T(T(X)) \ar[r]_{\mu_X} & T(X)
}
\end{aligned}
\]
commute. Also recall that the Kleisli $\c_T$ category of such a monad on $\c$ is the category with objects that of $\c$, for all $X, Y \in \c$ homsets $\hom_\c(X, TY)$, and composition of $f^\ast: X_T \to Y_T$, $g^\ast: Y_T \to Z_T$, defined by $f: X \to TY$, $g: Y \to TZ$, given by $g^\ast \circ_T f^\ast = (\mu \circ Tg \circ f)^\ast$.

As mentioned above, it can be checked that the functor mapping a set to its power set can be viewed as a monad on $\set$, and the Kleisli category for this monad is isomorphic to $\rel$. In the case of $\meas$ and $\stoch$, we define the functor of the Giry monad $\mathcal P: \meas \to \meas$ to be the functor taking a measurable space $(X,\S)$ to the set of all probability measures on $(X,\S)$ with the smallest $\s$-algebra such that the evaluation maps 
\ba
\mathcal PX \times \S &\longrightarrow [0,1]; \\
(\mu,B) &\longmapsto \mu(B).
\ea
are measurable.\footnote{We earlier saw hints that a stochastic map may be viewed as a measure-valued measurable function. We now see the precise meaning of this statement: a stochastic map is defined by a measurable function $X \to \mathcal PX$.} The associated natural transformations of the monad are that $1 \to \mathcal P$ sending a point to its point measure, and that $\mathcal P^2 \to \mathcal P$ sending a measure on the set of measures to its integral. It can then be shown that this forms a well-defined monad, with Kleisli category $\stoch$.

As $\stoch$ is the Kleisli category for $\mathcal P$, $\mathcal P$ can be factored through $\stoch$, and in fact through the functor $\d: \meas \to \stoch$. This is done by defining the functor $\varepsilon: \stoch \to \meas$ sending a measurable space $X$ to $\mathcal PX$ and a stochastic map $k:X \to Y$ to the measurable function $\varepsilon k: \mathcal PX \to \mathcal PY$ defined by
\ba
\varepsilon k: \mathcal PX &\longrightarrow \mathcal PY; \\
\mu & \longmapsto \left(B \in \S_Y \mapsto \textstyle\int_X k(x,B) \,d\mu\right).
\ea
We then have an adjunction $\d \dashv \varepsilon$
\[
\xymatrix{
\meas \ar@/^/[rr]^{\d} & & \stoch \ar@/^/[ll]^{\varepsilon}
}
\]
with composite $\gamma \circ \d = \mathcal P$.

Finally, note that if $X$ is finite or countably generated then $\mathcal PX$ is finite or countably countably generated respectively too, so we may also view $\finstoch$ and $\cgst$ as Kleisli categories of monads on $\mathbf{FinMeas}$ and $\mathbf{CGMeas}$ respectively.

\chapter{Bayesian Networks} \label{ch:bn}

In the first chapter we discussed a formalism for representing processes, while in the second we introduced a way to think of these processes as probabilistic. In this short third chapter we now add to this some language for describing selected probabilistic processes as causal. 

As in the case of probability, although the intuition for the concept is clear, any attempt to make precise what is meant by causality throws up a number of philosophical questions. We shall not delve into these here, but instead say that we will naively view a causal relationship as an asymmetric one between two variables, in which the varying of one---the cause---necessarily induces variations in the other---the effect. In particular, we think of a causal relationship as implying a physical, objective, mechanism through which this occurs.

The structure we have chosen, Bayesian networks, has roots in graphical models in statistics, and was first proposed as a language for causality by Pearl in \cite{P3}, with special interest in applications to machine learning and artificial intelligence. Since then Bayesian networks have played a significant role in discussions of causality from both a computational and a philosophical perspective. This chapter in particular relies on expositions by Pearl \cite{P} and Williamson \cite{Wi}.

\section{Conditionals and independence}

Much of the difficulty in the discussion of causality arises from the fact that causal relationships can never be directly observed. We instead must reconstruct such relationships from hints in independencies between random variables. The key point is that if $A$ causes $B$, then $A$ and $B$ cannot be independent.

\begin{defn}[Independence]
Let $(X,\S_X)$ and $(Y,\S_Y)$ be measurable spaces, and let $P$ be a joint probability measure on the product space $(X \times Y, \S_X \ot \S_Y)$. We say that $X$ and $Y$ are \emph{independent with respect to $P$} if $P$ is equal to the product measure of $P_X$ and $P_Y$, and \emph{dependent with respect to $P$} otherwise. 
\end{defn} 

Independent joint distributions can also be characterised as those that are of the form
\[
P(A\times B) = \int_A c(-,B)\,dP_X,
\]
for all $A \in \S_X$ and $B \in \S_Y$, where $c: X \to Y$ is a stochastic map that factors through the terminal object of $\stoch$. Indeed, to find such a $c$ corresponding to any independent joint distribution, we may just take the stochastic map defined by
\[
c(x,B) = P_Y(B)
\]
for all $x \in X$. This general idea gives a recipe for deconstructing, or \emph{factorising}, a joint probability measure $P$ into a marginal $P_X$ on one factor and a stochastic map $P_{Y|X}: X \to Y$ from that factor to the product of the others. We call this stochastic map a conditional for the joint measure.

\begin{defn}[Conditional]
Let $(X,\S_X)$ and $(Y,\S_Y)$ be measurable spaces, and let $P$ be a joint probability measure on the product space $(X \times Y, \S_X \ot \S_Y)$. Then we say that a stochastic map $P_{Y|X}: (X, \S_X) \to (Y, \S_Y)$ is a \emph{conditional for $P$ with respect to $X$} if for all $A \in \S_X$ and $B \in \S_Y$ we have
\[
P(A \times B) = \int_A P_{Y|X}(-,B) \, dP_X.
\]
Note that the above integral consequently defines a measure on $X \times Y$ equal to $P$.
\end{defn}

Considering the marginals as stochastic maps $1 \to (X,\S_X)$, this also implies that
\[
P_Y = P_{Y|X}\circ P_X.
\]
This says that $P_{Y|X}$ is a stochastic map from $X$ to $Y$ that maps the marginal $P_X$ to the marginal $P_Y$. As there are many joint measures with marginals $P_X$ and $P_Y$, however, this is not a sufficient condition for $P_{Y|X}$ to be the conditional for $P$ with respect to $X$.

While this is so, given a joint probability measure with marginals again probability measures, under mild constraints it is always true that there exists a conditional for it, and that this conditional is `almost' unique. This is made precise by the following proposition. Recall that given a measurable space $(X,\S)$, we call a measure $\mu$ on $X$ \emph{perfect} if for any measurable function $f: X \to \rr$ there exists a Borel measurable set $E \subseteq f(X)$ such that $\mu(f\i(E)) = \mu(X)$. This proposition represents another reason why we will occasionally restrict our attention to $\cgst$.

\begin{prop}[Existence of regular conditionals] \label{prop.disint}
Let $(X,\S_X)$ and $(Y,\S_Y)$ be countably generated measurable spaces, and let $\mu$ be a measure on the product space such that the marginal $\mu_X$ is perfect. Then there exists a stochastic map $k: (X,\S_X) \to (Y,\S_Y)$ such that for all $A \in \S_X$ and $B \in \S_Y$ we have
\[
\mu(A \times B) = \int_Ak(-,B) \, d\mu_X.
\]
Furthermore, this stochastic map is unique in the sense that if $k'$ is another stochastic map with these properties, then $k$ and $k'$ are equal $\mu_X$ almost everywhere.
\end{prop}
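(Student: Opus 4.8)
The plan is to construct $k$ from Radon--Nikodym derivatives and then patch these densities together into a coherent family of measures, using countable generation of $\S_Y$ to cut the patching down to countably many almost-everywhere conditions and perfectness of $\mu_X$ to make the resulting set functions genuinely countably additive on $Y$. First I would fix $B \in \S_Y$ and note that $A \mapsto \mu(A \times B)$ is a finite measure on $(X,\S_X)$ dominated by $\mu_X$, since $\mu(A \times B) \le \mu(A \times Y) = \mu_X(A)$; in particular it vanishes on $\mu_X$-null sets. The Radon--Nikodym theorem then supplies a measurable density $k(-,B): X \to [0,1]$, unique up to a $\mu_X$-null set, with
\[
\mu(A \times B) = \int_A k(-,B)\, d\mu_X \quad \text{for all } A \in \S_X.
\]
This already delivers the defining integral identity for each single $B$; all the work is in choosing the densities coherently as $B$ varies.

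Next I would use that $\S_Y$ is countably generated. Let $\mathcal G$ be a countable generating set and let $\mathcal A \supseteq \mathcal G$ be the (countable) algebra it generates. The relations expressing that $x \mapsto k(x,-)$ is a finitely additive, $[0,1]$-valued, normalised set function on $\mathcal A$---namely $k(x,\varnothing)=0$, $k(x,Y)=1$, and $k(x,B \cup B') = k(x,B)+k(x,B')$ for disjoint $B,B' \in \mathcal A$---are each a single almost-everywhere statement by uniqueness of densities, and there are only countably many of them. Discarding the countable union of the exceptional null sets leaves a $\mu_X$-null set $N$ off which $x \mapsto k(x,-)$ is a finitely additive probability set function on the algebra $\mathcal A$.

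The crux, and the step I expect to be the main obstacle, is promoting each finitely additive $k(x,-)$ on $\mathcal A$ to an honest probability measure on $\S_Y = \s(\mathcal A)$. The generating set $\mathcal G$ realises $Y$ inside the Cantor space $\{0,1\}^{\nn}$, sending each $G_n$ to a clopen cylinder; since the Cantor space is compact, a decreasing sequence of nonempty clopen sets cannot shrink to $\varnothing$, so finite additivity on the clopen algebra is automatically a premeasure and Carath\'eodory extension yields a Borel measure $\tilde k(x,-)$ on $\{0,1\}^{\nn}$ for almost every $x$. The genuine difficulty is that $Y$ need not be a Borel subset of $\{0,1\}^{\nn}$, so a priori $\tilde k(x,-)$ could place mass outside the image of $Y$ and fail to descend to $\S_Y$; this is exactly where perfectness of $\mu_X$ enters, forcing $\tilde k(x,-)$ to concentrate on a Borel set contained in the image of $Y$ for almost every $x$. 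Writing $B = \iota\i(D)$ for the embedding $\iota$, setting $k(x,B) = \tilde k(x,D)$ is then well defined and countably additive, and a monotone-class argument (the $B$ for which $k(-,B)$ is measurable and satisfies the integral identity form a $\lambda$-system containing $\mathcal A$) extends measurability in $x$ and the integral identity from $\mathcal A$ to all of $\S_Y$.

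Finally, uniqueness is comparatively routine. If $k$ and $k'$ both satisfy the integral identity, then for each fixed $B$ the densities $k(-,B)$ and $k'(-,B)$ integrate to the same value over every $A \in \S_X$, so they agree $\mu_X$-almost everywhere by uniqueness of Radon--Nikodym derivatives. Running this over the countable algebra $\mathcal A$ and again discarding a countable union of null sets, the families $k(x,-)$ and $k'(x,-)$ agree on $\mathcal A$ off a single $\mu_X$-null set, and since both are measures they therefore agree on all of $\S_Y$ there, which is the asserted almost-everywhere equality.
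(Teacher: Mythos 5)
Your architecture is the standard one and most of it is sound: Radon--Nikodym for each fixed $B$, a countable generating algebra to reduce additivity to countably many almost-everywhere identities, the map $\iota: Y \to \{0,1\}^{\nn}$ built from a countable generating family, compactness of the clopen algebra to upgrade finite additivity of the extensions $\tilde k(x,-)$ to countable additivity, and a monotone-class argument to finish; the uniqueness half is also fine. (For comparison, the paper does not prove this proposition at all: it cites Faden for existence and Vakhania--Tarieladze for uniqueness, so you are attempting strictly more than the text does.)

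The gap is exactly at the step you flag as the crux. Perfectness of $\mu_X$ is a statement about measurable functions \emph{out of} $X$, and it gives no control over where the measures $\tilde k(x,-)$, which live on the Cantor-space image of $Y$, put their mass. The hypothesis that actually does the work --- and the one in the Faden theorem the paper cites --- is perfectness of the $Y$-marginal $\mu_Y$: one checks that $D \mapsto \int_X \tilde k(x,D)\, d\mu_X$ agrees with $\iota_\ast \mu_Y$ on the clopen algebra and hence on all Borel sets, so perfectness of $\mu_Y$ supplies a Borel set $E \subseteq \iota(Y)$ with $\iota_\ast\mu_Y(E) = 1$, whence $\tilde k(x,E) = 1$ for $\mu_X$-almost every $x$ and the measures descend to $\S_Y$. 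With only $\mu_X$ perfect this step does not merely resist your argument --- it fails: take $X = [0,1]$ with Lebesgue measure, let $Y \subseteq [0,1]$ have outer measure $1$ and inner measure $0$ with the trace $\s$-algebra and trace measure $\mu_Y$, and let $\mu$ be the image of $\mu_Y$ under $y \mapsto (y,y)$. Then $\mu_X$ is Lebesgue measure (perfect) and $\S_Y$ is countably generated, but off a $\mu_X$-null set any candidate $k$ must satisfy $k(x, Y \cap B') = \chi_{B'}(x)$ for every $B'$ in a countable point-separating generating algebra, forcing $k(x, Y \cap \{x\}) = 1$; for $x \notin Y$ this set is empty, and since $[0,1] \sm Y$ is not null, no regular conditional exists. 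So either the hypothesis should be read as perfectness of $\mu_Y$ (or of $\mu$ itself), in which case your proof closes up as above, or the statement as written cannot be proved.
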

\begin{proof}
Existence is proved in Faden \cite[Theorem 6]{Fa}. Uniqueness is not difficult to show, but can be found in Vakhania, Tarieladze\cite[Proposition 3.2]{VT}.
\end{proof}

The existence of conditionals gives rise to a more general notion of independence, aptly named conditional independence.

\begin{defn}[Conditional independence]
Let $(X,\S_X)$, $(Y,\S_Y)$, $(Z,\S_Z)$ be measurable spaces, and let $P$ be a joint probability measure on the product space $X\times Y \times Z$. We say that \emph{$X$ and $Y$ are conditionally independent given $Z$ (with respect to $P$)} if 
\begin{enumerate}[(i)]
\item a conditional $P_{XY|Z}: (Z,\S_Z) \to (X\times Y,\S_X \ot \S_Y)$ exists; and 
\item for each $z \in Z$, $X$ and $Y$ are independent with respect to the probability measure $P_{XY|Z}(z,-)$.
\end{enumerate}
\end{defn}

This notion gives us far more resolution in investigations of how variables can depend on each other, and hence in finding causal relationships. For example, the variables representing the amount of rain on a given day in London and in Beijing are dependent---on a winter day it on average rains more than a summer one in both cities---, but we can tell they are not causally related because they are conditionally independent given the season. A key feature of Bayesian networks is that it allows us to translate facts about causal relationships into facts about conditional independence, and vice versa.

The following lemma helps with further conceptualising conditional independence. In particular, conditions (iii) and (iv) say that if $X$ and $Y$ are conditionally independent given $Z$, then upon knowing the outcome of $Z$, the outcome of $X$ gives no information about the outcome of $Y$, and the outcome of $Y$ gives no information about the outcome of $X$.

\begin{lem}[Countable conditional independence]
Let $X,Y,Z$ be countable discrete measurable spaces with a joint probability measure $P$ such that the marginals on $Z$, $XZ$, $YZ$ each have full support. The following are equivalent:
\begin{enumerate}[(i)]
\item $X$ and $Y$ are conditionally independent given $Z$.
\item $P_{X|Z}(z,\{x\})P_{Y|Z}(z,\{y\}) = P_{XY|Z}(z,\{(x,y)\})$ for all $x \in X$, $y \in Y$, $z \in Z$.
\item $P_{X|YZ}(y,z,\{x\}) = P_{X|Z}(z,\{x\})$ for all $x \in X$, $y \in Y$, $z \in Z$.
\item $P_{Y|XZ}(x,z,\{y\}) = P_{Y|Z}(z,\{y\})$ for all $x \in X$, $y \in Y$, $z \in Z$.
\end{enumerate}
\end{lem}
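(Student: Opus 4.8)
The plan is to collapse the whole statement to explicit formulas in the countable discrete setting and then verify the equivalences by elementary algebra, with the full-support hypotheses doing the work of making every conditional well-defined and genuinely unique as a function. First I would fix notation: write $p(x,y,z)$ for $P(\{(x,y,z)\})$ and record the corresponding marginals, for instance $P_Z(\{z\}) = \sum_{x,y} p(x,y,z)$ and similarly $P_{XZ}$, $P_{YZ}$. The full-support hypotheses on the marginals over $Z$, $XZ$, $YZ$ say precisely that each of $P_Z(\{z\})$, $P_{XZ}(\{(x,z)\})$, $P_{YZ}(\{(y,z)\})$ is strictly positive. The conditionals in the statement exist (by the explicit ratios below, or by Proposition \ref{prop.disint}, whose perfectness hypothesis holds trivially here since $Z$ is countable), and because the relevant conditioning marginals have full support, the almost-everywhere uniqueness of Proposition \ref{prop.disint} upgrades to genuine equality of functions: the only null set is empty. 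Hence each conditional is forced to be the expected discrete ratio,
\[
P_{XY|Z}(z,\{(x,y)\}) = \frac{p(x,y,z)}{P_Z(\{z\})}, \qquad P_{X|YZ}(y,z,\{x\}) = \frac{p(x,y,z)}{P_{YZ}(\{(y,z)\})},
\]
and analogously for $P_{X|Z}$, $P_{Y|Z}$, and $P_{Y|XZ}$. I would then record the key observation that the $X$- and $Y$-marginals of the measure $P_{XY|Z}(z,-)$ are exactly $P_{X|Z}(z,-)$ and $P_{Y|Z}(z,-)$, since summing the first ratio over $y$ (respectively $x$) collapses to $P_{XZ}(\{(x,z)\})/P_Z(\{z\})$ (respectively its $Y$-analogue).

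With this in hand, (i) $\Leftrightarrow$ (ii) is just an unfolding of the definition of conditional independence: condition (i) requires that $X$ and $Y$ be independent with respect to each measure $P_{XY|Z}(z,-)$, and by the marginal identification of the previous step this independence—equality of a discrete measure with the product of its marginals, which may be checked on singletons—is precisely the factorisation $P_{XY|Z}(z,\{(x,y)\}) = P_{X|Z}(z,\{x\})P_{Y|Z}(z,\{y\})$, namely condition (ii). For (ii) $\Leftrightarrow$ (iii) I would substitute the explicit ratios and clear denominators: condition (iii) reads $p(x,y,z)/P_{YZ}(\{(y,z)\}) = P_{XZ}(\{(x,z)\})/P_Z(\{z\})$, and multiplying through by the positive quantity $P_{YZ}(\{(y,z)\})/P_Z(\{z\})$ turns it into exactly the factorisation (ii). The equivalence (ii) $\Leftrightarrow$ (iv) follows identically after exchanging the roles of $X$ and $Y$, so assembling (i) $\Leftrightarrow$ (ii), (ii) $\Leftrightarrow$ (iii), and (ii) $\Leftrightarrow$ (iv) yields the full cycle.

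I expect the only genuine subtlety to be the bookkeeping around well-definedness and uniqueness. The conditionals named in the statement are a priori defined only up to almost-everywhere equality, so the crux is to observe that full support of the conditioning marginals collapses ``almost everywhere'' to ``everywhere'', which both legitimises treating each conditional as the single explicit ratio and guarantees that all the divisions performed in the algebra are by strictly positive numbers. Once that point is secured, every one of the four conditions becomes a statement about the single array $p(x,y,z)$, and the remaining work is routine manipulation rather than analysis.
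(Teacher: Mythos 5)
Your proof is correct, and it is self-contained where the paper's is not: the paper disposes of (i)$\Leftrightarrow$(ii) with the one-line remark that discrete conditionals are determined by their values on singletons, and outsources the equivalence of (ii), (iii) and (iv) entirely to a citation of standard probability texts. You instead carry out the elementary computation: reducing every conditional to the explicit ratio $p(x,y,z)$ over the appropriate marginal, identifying the marginals of $P_{XY|Z}(z,-)$ with $P_{X|Z}(z,-)$ and $P_{Y|Z}(z,-)$, and clearing (strictly positive) denominators. This buys a complete argument with no external reference, and it makes visible exactly where each hypothesis enters --- full support of the conditioning marginals is what upgrades the almost-everywhere uniqueness of regular conditionals to genuine equality of functions and licenses every division --- whereas the paper's version leaves that role of the full-support assumption implicit. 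The one point worth stating a little more explicitly in your write-up is the standard fact that two probability measures on a product of countable discrete spaces agreeing on singletons (a generating $\pi$-system) are equal, which is what justifies checking independence of $P_{XY|Z}(z,-)$ against the product of its marginals only on points; with that sentence added, nothing is missing.
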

\begin{proof}
The equivalence of (i) and (ii) is just the definition of conditional independence, noting that in the discrete case the conditionals are uniquely determined by their values on individual outcomes. The equivalence of (ii), (iii), and (iv) follow from elementary facts in probability theory; a proof can be found in \cite{Lau}.
\end{proof}

\section{Bayesian networks}

We introduce Bayesian networks with an example. 

\begin{ex} \label{ex.bn}
Suppose that we wish to add a causal interpretation to a joint probability measure on the binary random variables $A=\{a, \neg a\}$, $B = \{b, \neg b \}$, and $C= \{c, \neg c\}$ representing the propositions that, upon being presented with a food:
\begin{itemize}
\item[A:] you like, or \emph{appreciate}, the food.
\item[B:] the food is nutritionally \emph{beneficial}.
\item[C:] you \emph{choose} to eat the food.
\end{itemize}
\noindent Let these random variables have joint probability measure given by the table
\begin{center}
\begin{tabular}{c|l}
$x$ & $P(x)$ \\ \hline
$a,b,c$ &Ê0.24 \\
$a,b,\neg c$ & 0\\
$a,\neg b,c$ & 0.18 \\
$a,\neg b, \neg c$ & 0.18 \\
$\neg a, b,c$ &Ê0.06 \\
$\neg a, b,\neg c$ & 0.10 \\
$\neg a,\neg b,c$ & 0 \\
$\neg a,\neg b, \neg c$ & 0.24
\end{tabular}
\end{center}
Intuitively, the causal relationships between our variables are obvious: liking a food influences whether you choose to eat it, and so does understanding it has health benefits, but otherwise there are no causal relationships between the variables---liking a food does not cause it to be more (or less) healthy. We shall represent these causal relationships by the directed graph
\begin{center}
\begin{tikzpicture}
  \node (A) at (-1,1) {$A$};
  \node (B) at (1,1)  {$B$};
  \node (C) at (0,-1)  {$C$};
  \foreach \from/\to in {A/C,B/C}
    \draw[->,thick] (\from) -- (\to);
\end{tikzpicture}
\end{center}
where we have drawn an arrow from one variable to another to indicate that that variable has causal influence on the other. The above joint probability measure and graph comprise what we will later define as a Bayesian network. 

Note that we could not have chosen just any directed graph with vertices $A$, $B$, and $C$, as assertions about causal relationships have consequences that must be reflected in the joint probability measure. For example, as in the above graph neither $A$ or $B$ cause of each other, nor have a common cause, we expect that $A$ and $B$ are independent with respect to the marginal $P_{AB}$. This is true. Writing probability measures as $n \times 1$ stochastic matrices with respect to the bases $\{x, \neg x\}$ for the binary variables and $\{ab,a\neg b, \neg a b, \neg a \neg b\}$ for the variable $AB$, we have
\[
P_A = \begin{pmatrix} 0.6 \\ 0.4 \end{pmatrix}; \qquad P_B = \begin{pmatrix} 0.4 \\ 0.6 \end{pmatrix}
\]
and hence
\[
P_A \ot P_B = P_{AB} = \begin{pmatrix} 0.24 \\ 0.36  \\ 0.16 \\ 0.24 \end{pmatrix}
\]
Furthermore, the above graph suggests that the probability measure on $C$ can be written as a function of the outcomes of both its causes $A$ and $B$. We thus expect that the measure has factorisation
\[
P(x_a,x_b,x_c) = \int_{(x_a,x_b)}\int_{x_c} dP_{C|AB} dP_{AB} = \int_{x_a}\int_{x_b}\int_{x_c} dP_{C|AB} dP_B dP_A,
\]
where $x_a \in A$, $x_b \in B$, $x_c \in C$. As these variables are finite, we might also write this requirement as
\[
P(x_a,x_b,x_c) = P_{C|AB}(x_a,x_b,\{x_c\}) P_B(\{x_b\}) P_A(\{x_a\}).
\]
Again, this is also true, with
\[
P_{C|AB} = \begin{pmatrix} 1 & 0.5 & 0.375 & 0 \\ 0 & 0.5 & 0.625 & 1 \end{pmatrix}.
\]
Motivated by this, we will later define a compatibility requirement in terms of the existence of a certain factorisation. Note that although in general a probability measure will have many factorisations; the directed graph specifies a factorisation that we attach greater---causal---significance to.

An advantage of expressing the causal relationships as a graph is that we may read from it other, acausal, dependencies. For example, while $A$ and $B$ are independent, the above graph suggests that if we know something about their common consequence $C$, this should induce some dependence between them. Indeed we find this does occur. Observe that 
\[
P_{AB|C}(c,-) = \begin{pmatrix} 0.5 \\ 0.375  \\ 0.125 \\ 0 \end{pmatrix},
\]
which indicates that of the foods you choose to eat, the foods you like are more likely to be unhealthy than those you dislike. 

More than this, however, marking certain relationships as causal affects our understanding of how a joint probability measure should be interpreted; we will see an example of this in the next chapter.
\end{ex}

To make these ideas precise we introduce some definitions. Recall that a \emph{directed graph} $G = (V,A,s,t)$ consists of a finite set $V$ of vertices, a finite set $A$ of arrows, and source and target maps $s,t:A \to V$ such that no two arrows have the same source and target---precisely, such that for all $a, a' \in A$ either $s(a) \ne s(a')$ or $t(a) \ne t(a')$. An arrow $a \in A$ is said to be an \emph{arrow from $u$ to $v$} if $s(a)=u$ and $t(a)=v$, while a sequence of vertices $v_1, \dots, v_k \in V$ is said to form a \emph{path from $v_1$ to $v_k$} if for all $i =1, \dots, k-1$ there exists an arrow $a_i \in A$ from $v_i$ to $v_{i+1}$. A path is also called a \emph{cycle} if in addition $v_1 = v_k$. A directed graph is \emph{acyclic} if it contains no cycles. 


As demonstrated in the above example, directed acyclic graphs provide a depiction of causal relationships between variables; the direction represents the asymmetry of the causal relationship, while cycles are disallowed as variables cannot have causal influence on themselves. When we think of the set of vertices of a directed acyclic graph as the set of random variables of a system, we will also call the graph a \emph{causal structure}.

Given a directed graph, we use the terminology of kinship to talk of the relationships between vertices, saying that a vertex $u$ is a \emph{parent} of a vertex $v$ if there is an arrow from $u$ to $v$, $u$ is an \emph{ancestor} of $v$ if there is a path from $u$ to $v$, $u$ is a \emph{child} of $v$ if there is an arrow from $v$ to $u$, and $u$ is a \emph{descendent} of $v$ if there is a path from $u$ to $v$. We will in particular talk of the parents of a vertex frequently, and so introduce the notation
\[
\pa(v) = \{u \in V \mid \textrm{there exists } a \in A \textrm{ such that } s(a) = u, t(a) = v\}
\]
for the set of parents of a vertex. When dealing with graphs as causal structures, we will also use the names \emph{direct causes}, \emph{causes}, \emph{direct effects}, and \emph{effects} to mean parents, ancestors, children, and descendants respectively.

We say that an ordering $\{v_1, \dots , v_n\}$ of the set $V$ is an \emph{ancestral ordering} if $v_i$ is an ancestor of $v_j$ only when $i < j$.

\begin{defn}[Bayesian network]
Let $G = (V,A,s,t)$ be a directed acyclic graph, for each $v \in V$ let $X_v$ be a measurable space, and let $P$ be a joint probability measure on $\prod_{v \in V} X_v$. We say that the causal structure $G$ and the joint probability measure $P$ are \emph{compatible} if there exists an ancestral ordering of the elements $V$ such that there exist conditionals such that
\[
P(A_1 \times A_2 \times \dots \times A_n) =  \int_{A_1} \int_{A_2} \dots \int_{A_n} P_{X_n|\pa(X_n)} \dots P_{X_2|\pa(X_2)} P_{X_1}.
\]
A \emph{Bayesian network} $(G,P)$ is a pair consisting of a compatible joint probability measure and causal structure.
\end{defn}

A better understanding of this compatibility requirement can be gained from the examining following theorem.

\begin{thm}[Equivalent compatibility conditions]
Let $G$ be a causal structure, let $\{(X_v,\S_v) \mid v \in V\}$ be a collection of finite measurable spaces indexed by the vertices of $G$, and let $P$ be a joint probability measure on their product space. Then the following are equivalent:
\begin{enumerate}[(i)]
\item $P$ is compatible with $G$.
\item $P(x_1, \dots, x_n) = \prod_{i=1}^n P_{X_i|\pa(X_i)}(\pa(x_j),\{x_i\})$, where $\pa(x_j)$ is the tuple consisting of $x_j$ such that $X_j \in \pa(X_i)$.
\item $P$ obeys the \emph{ordered Markov condition} with respect to $G$: given any ancestral ordering of the variables, each variable is independent of its remaining preceding variables conditional on its parents.
\item $P$ obeys the \emph{arental Markov condition} with respect to $G$: each variable is independent of its nondescendents conditional on its parents.
\end{enumerate}
\end{thm}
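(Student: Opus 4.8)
The plan is to prove the four conditions equivalent by a short cycle, working in the finite (discrete) setting throughout so that every integral against a measure collapses to a finite sum and every conditional is determined by its values on singletons, as in Example~\ref{ex:fms2} and the Countable conditional independence lemma. First I would dispatch (i)$\Leftrightarrow$(ii): since the spaces are finite we may evaluate the defining factorisation of compatibility on a singleton $\{(x_1,\dots,x_n)\}$, whereupon each integral $\int_{A_i}$ becomes a sum over $A_i$ and, at the singleton, simply selects the value of the corresponding conditional at $x_i$. Thus the integral identity of (i) holds on all measurable rectangles if and only if the pointwise product identity (ii) holds, using only that a directed acyclic graph always admits an ancestral (topological) ordering and that a measure on a product of finite spaces is determined by its values on singletons.

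The two easy implications of the cycle are (iii)$\Rightarrow$(ii) and (iv)$\Rightarrow$(iii). For the former, fix the ancestral ordering supplied by (iii) and apply the chain rule $P(x_1,\dots,x_n)=\prod_{i} P(x_i \mid x_1,\dots,x_{i-1})$; the ordered Markov condition says exactly that each factor equals $P_{X_i\mid\pa(X_i)}(\pa(x_i),\{x_i\})$, which is (ii). For (iv)$\Rightarrow$(iii), the key observation is that in any ancestral ordering every variable preceding $X_i$ is a nondescendent of $X_i$ (a descendent must appear later), so conditional independence of $X_i$ from all of its nondescendents given its parents specialises immediately to independence from the preceding non-parent variables given its parents, which is (iii).

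The substantive step, and the one I expect to be the main obstacle, is (ii)$\Rightarrow$(iv): deriving the parental Markov condition from the factorisation. Here I would fix a vertex $v$ and choose an ancestral ordering in which all nondescendents of $v$ precede $v$ and all descendents of $v$ follow it. Such an ordering exists because every parent of a nondescendent is again a nondescendent (otherwise that vertex would be a descendent of a descendent of $v$, hence a descendent of $v$, by acyclicity), so the nondescendents may be listed first as a consistent block. Marginalising the factorisation (ii) over the terminal block of descendents, summed innermost-first, lets each conditional $P_{X_d\mid\pa(X_d)}$ sum to $1$ over its own variable $x_d$, leaving a factorisation of the marginal on $\{v\}\cup\mathrm{nd}(v)$ over the induced subgraph, with $v$ the last variable and $\pa(v)\subseteq\mathrm{nd}(v)$. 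Reading off the last factor of this reduced factorisation via the chain rule then gives $P(x_v \mid x_{\mathrm{nd}(v)})=P_{X_v\mid\pa(X_v)}(\pa(x_v),\{x_v\})$, which is precisely conditional independence of $v$ from its nondescendents given its parents.

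The delicate points to get right are the bookkeeping in the marginalisation---checking that summing out the terminal descendent block really does collapse those conditionals to $1$ while preserving the form of the remaining factors---and the verification that an ancestral ordering with the nondescendents up front and the descendents as a terminal block genuinely exists. Throughout, the finiteness hypothesis is exactly what licenses the passage between integrals and sums and the pointwise manipulation of conditionals; to push the argument past the discrete case one would instead need the regular-conditional machinery of Proposition~\ref{prop.disint}.
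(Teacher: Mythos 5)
Your proposal is correct in substance, but note that the paper does not actually prove this theorem: it simply defers to Corollaries 3 and 4 of Theorem 3.9 in Pearl \cite{P2}. What you have written is essentially the standard direct argument from that literature, specialised to the finite case, and all four links of your cycle check out: the reduction of the integral factorisation to a pointwise product on singletons, the chain-rule derivation of (ii) from (iii), the observation that predecessors in an ancestral ordering are nondescendents (together with the decomposition property of conditional independence) for (iv)$\Rightarrow$(iii), and the block ancestral ordering plus marginalisation of the terminal descendent block for (ii)$\Rightarrow$(iv). Your justification that the nondescendents of $v$ form an ancestrally closed set, so that such an ordering exists, is the right lemma and is correctly argued. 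Two points that a full write-up would need to make explicit, both routine in the finite setting: first, condition (i) asserts only that \emph{some} stochastic maps $\pa(X_i) \to X_i$ factorise $P$, whereas (ii) is phrased in terms of the conditionals $P_{X_i\mid\pa(X_i)}$ of $P$ itself, so you must check (by marginalising the factorisation) that any factorising kernels agree with genuine conditionals wherever the parent configuration has positive marginal probability; second, on parent configurations of marginal probability zero the conditionals are not determined, so the pointwise identities in (ii) and in the chain-rule steps should be read as holding for all configurations of positive probability (where they do hold, since the smallest index at which the chain-rule prefix vanishes contributes a zero factor to the product). Neither issue threatens the argument, and your proof has the advantage over the paper's bare citation of making visible exactly where finiteness is used and what would need Proposition \ref{prop.disint} in the general case.
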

\begin{proof}
This follows from Corollaries 3 and 4 to Theorem 3.9 in Pearl \cite{P2}.
\end{proof}

As an illustration of the relevance of causal structure, we note that conditional independence relations between variables of a Bayesian network can be read from the causal structure using a straightforward criterion. Call a sequence of vertices $v_1, \dots, v_k \in V$ an \emph{undirected path from $v_1$ to $v_k$} if for all $i =1, \dots, k-1$ there exists an arrow $a_i \in A$ from $v_i$ to $v_{i+1}$ or from $v_{i+1}$ to $v_i$. An undirected paths $v_1,v_2,v_3$ of three vertices then take the form of a
\begin{enumerate}[(i)]
\item \emph{chain}: $v_1 \to v_2 \to v_3$ or $v_1 \leftarrow v_2 \leftarrow v_3$, 
\item \emph{fork}: $v_1 \leftarrow v_2 \to v_3$; or
\item \emph{collider}: $v_1 \to v_2 \leftarrow v_3$.
\end{enumerate} 
An undirected path from a vertex $u$ to a vertex $v$ is said to be \emph{d-separated} by a set of nodes $S$ if either the path contains a chain or a fork such that the centre vertex is in $S$, or if the path contains a collider such that the neither the centre vertex nor any of its descendants are in $S$. A set $S$ then \emph{d-separates} a set $U$ from a set $T$ if every path from a vertex in $U$ to a vertex in $T$ is \emph{d}-separated by $S$.

The main idea of this definition is that causal influence possibly creates dependence between two random variables if one variable is a cause of the other, the variables have a common cause, or the variables have a common consequence and the outcome of this consequence is known. In this last case, knowledge of the consequence `unseparates' the two variables along the path through the known common consequence. On the other hand, any information gained through having a common cause is rendered moot if we have knowledge about a variable through which the causal influence is mediated. These ideas are captured by the following theorem.

\begin{thm}
Let $(G,P)$ be a Bayesian network. If sets $U$ and $T$ of vertices of $G$ are \emph{d}-separated by a third set $S$, then with respect to $P$, the product random variables $\prod_{u \in U} X_u$ and $\prod_{t \in T} X_t$ are conditionally independent given $\prod_{s \in S} X_s$.
\end{thm}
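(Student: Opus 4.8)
The plan is to convert this \emph{d}-separation statement into an ordinary graph-separation statement for an \emph{undirected} graphical model, where conditional independence drops out of a clean factorisation argument. Throughout I would work through the factorisation characterisation of compatibility supplied by the Equivalent compatibility conditions theorem, treating $P$ as a product of conditionals $P_{X_v \mid \pa(X_v)}$ taken along an ancestral ordering. The overall strategy is the standard three-move chain: \emph{restrict to an ancestral set}, \emph{moralise}, and \emph{apply the global Markov property for undirected graphs}.

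First I would restrict attention to the ancestral set $W = \mathrm{An}(U \cup T \cup S)$, consisting of $U \cup T \cup S$ together with all ancestors of its members. Marginalising $P$ over the vertices outside $W$ one at a time, in reverse ancestral order, each such vertex is childless in the remaining subgraph, so its conditional integrates to $1$ and disappears; what survives is a factorisation of the marginal $P_W$ along the induced sub-DAG $G_W$. This is the step where the collider clause in the definition of \emph{d}-separation ultimately earns its keep, since discarding non-ancestors is exactly what makes unconditioned colliders block paths.

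Next I would \emph{moralise}: build the undirected graph $(G_W)^m$ on the vertices of $W$ by joining every pair of vertices sharing a common child, adding each child--parent edge, and then forgetting orientations. Since each factor $P_{X_v \mid \pa(X_v)}$ depends only on $v$ together with $\pa(v)$, and these vertices form a clique of $(G_W)^m$ by construction, the factorisation of $P_W$ over $G_W$ is simultaneously a factorisation over the cliques of the moral graph. A distribution that factorises over the cliques of an undirected graph obeys the global Markov property for that graph: if $S$ separates $U$ from $T$ in the undirected sense, the clique factors partition into those supported on the $U$-side together with $S$ and those on the $T$-side together with $S$, and regrouping them exhibits the required conditional product form, yielding $\prod_{u \in U} X_u \perp \prod_{t \in T} X_t \mid \prod_{s \in S} X_s$.

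The main obstacle is the purely combinatorial lemma tying the two notions of separation together: that $S$ \emph{d}-separates $U$ from $T$ in $G$ if and only if $S$ separates $U$ from $T$ in the moral graph $(G_W)^m$ of the ancestral set. Establishing this needs a careful analysis of how an undirected path in the moral graph lifts to a trail in the original DAG, checking that a chain or fork through $S$ is broken by moral separation, while the marriage edges introduced by colliders reconnect a path around $S$ exactly in the situations where \emph{d}-separation would declare the path active. Once this lemma is in hand, the marginalise--moralise--apply-undirected-global-Markov chain closes the argument. For the finite case this rests entirely on the factorisation theorem already available; for non-finite spaces the same outline applies provided the conditionals used exist, which Proposition~\ref{prop.disint} guarantees under the perfectness hypothesis, with the general statement otherwise citable from Pearl~\cite{P2}.
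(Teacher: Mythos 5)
Your proposal is correct in outline, but it takes a genuinely different route from the paper, which offers no argument of its own and simply defers to Verma and Pearl \cite{VP}. Their original proof is a direct combinatorial induction on the DAG; what you describe is the alternative moralisation argument of Lauritzen, Dawid, Larsen and Leimer: restrict to the ancestral closure of $U \cup T \cup S$, observe that the DAG factorisation of the marginal is a clique factorisation of the moral graph, and invoke the (easy direction of the) global Markov property for undirected graphs. That route buys a clean conceptual reduction to undirected separation and makes transparent why colliders outside the ancestral set are harmless, at the cost of concentrating all the combinatorial difficulty into the lemma you flag --- that $S$ \emph{d}-separates $U$ from $T$ in $G$ exactly when $S$ separates them in the moral graph of the ancestral set --- which you state but do not prove, and which is where most of the work in this approach actually lives. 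Two further points deserve care if this were to be written out in the setting of this dissertation. First, the factorisation theorem you lean on is stated in Chapter 3 only for finite measurable spaces, whereas the theorem here is asserted for general Bayesian networks; both the ``integrate out a childless vertex'' step and the regrouping of clique factors into a conditional product require existence of regular conditionals and a Fubini-type argument, so in full generality one needs the hypotheses of Proposition \ref{prop.disint} or a restriction to $\cgst$. Second, the undirected global Markov property follows from clique factorisation without any positivity assumption, so no Hammersley--Clifford machinery is needed; it is worth saying so explicitly, since the converse direction is where positivity enters and the two are easily conflated.
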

\begin{proof}
See Verma and Pearl \cite{VP}.
\end{proof}

An example of this is the way the variables $A$ and $B$ are independent in Example \ref{ex.bn}, but dependent conditional on $C$.

\chapter{Causal Theories}

We now tie the elements of the last three chapters together to propose and develop a novel algebraic structure: a causal theory. After introducing these structures, we discuss their models in various categories and how such models might be interpreted, and then look at a possibly confusing situation that causal theories and their associated graphical language help make lucid.

\section{The category associated to a causal structure}

We wish to fashion a category that captures methods of reasoning with causal relationships. In this category, we will want our objects to represent the variables of a situation, while the morphisms should represent the ways one can deduce knowledge about one variable from another. Furthermore, as we will want to deal with more than one variable at a time, and the outcomes their joint variable may take, this category will be monoidal. 

As we may only reason about causal relationships once we have some causal relationships to reason with, we start by fixing a set of symbols for our variables and the causal relationships between them. Let $G = (V,A,s,t)$ be a directed acyclic graph. From this we construct a strict symmetric monoidal category $\c_G$ in the following way. 

For the objects of $\c_G$ we take the set $\mathbb N^V$ of functions from $V$ to the natural numbers. These may be considered collections of elements of the set of variables $V$, allowing multiplicities, and we shall often just write these as strings $w$ of elements of $V$. Here the order of the symbols in the string is irrelevant, and we write $\varnothing$ for empty string, which corresponds to the zero map of $\mathbb N^V$. We view these objects as the \emph{variables} of the causal theory, and we further call the objects which are collections consisting of just one instance of a single element of $V$ the \emph{atomic variables} of the causal theory. 

There are two distinct classes of generating morphisms for $\c_G$. The first class is the collection of \emph{comonoid maps}: for each atomic variable $v \in V$, we include morphisms $\comult_v: v \to vv$ and $\counit_v: v \to \varnothing$. These represent the ideas of duplicating some information about $v$, or forgetting some. The maps of the second class are called the \emph{causal mechanisms}. These consist of, for each atomic variable $v \in V$, a morphism $[v|pa(v)]: pa(v) \to v$, where $pa(v)$ is the string consisting of the parents of $v$ in any order, and represent the ways we may use information about a collection of variables to infer facts about another. We then use these morphisms as generators for a strict symmetric monoidal category, taking all products and well-defined compositions, subject only to the constraint that for each $v \in V$ the pair $(\comult_v, \counit_v)$ forms a comonoid. As the swaps are identity maps, these comonoids are immediately commutative.

We call this category $\c_G$ the \emph{causal theory} of the causal structure $G$. Morphisms of $\c_G$ represent ways to reason about the outcome of the codomain variable given some knowledge about the domain variable.

As the causal mechanisms are labelled with their domain and codomain, there is usually no need to label the strings when representing morphisms of $\c$ with string diagrams. We also often do not differentiate between the comonoid maps with labels, as the context makes which comonoid map we are applying. The order in which we write the string representing the set $\pa(v)$ corresponds to the order of the input strings. 

\begin{ex}
The causal theory of the causal structure 
\begin{center}
\begin{tikzpicture}
  \node (A) at (-1,1) {$A$};
  \node (B) at (1,1)  {$B$};
  \node (C) at (0,-1)  {$C$};
  \foreach \from/\to in {A/C,B/C}
    \draw[->,thick] (\from) -- (\to);
\end{tikzpicture}
\end{center}
of Example \ref{ex.bn} is the symmetric monoidal category with objects collections of the letters $A$, $B$, and $C$, and morphisms generated by counit and comultiplication maps on each of $A$, $B$, and $C$, as well as causal mechanisms $[A]: \varnothing \to A$, $[B]: \varnothing \to B$, and $[C|AB]: AB \to C$. We depict these causal mechanisms respectively as
\[
\begin{aligned}
\begin{tikzpicture}
	\begin{pgfonlayer}{nodelayer}
		\node [style=tri] (0) at (0, -0.25) {$A$};
		\node [style=none] (1) at (0, 0.75) {};
		\node [style=none] (2) at (0, -0.75) {};
	\end{pgfonlayer}
	\begin{pgfonlayer}{edgelayer}
		\draw (1.center) to (0);
	\end{pgfonlayer}
\end{tikzpicture}
\end{aligned}
\qquad
\begin{aligned}
\begin{tikzpicture}
	\begin{pgfonlayer}{nodelayer}
		\node [style=tri] (0) at (0, -0.25) {$B$};
		\node [style=none] (1) at (0, 0.75) {};
		\node [style=none] (2) at (0, -0.75) {};
	\end{pgfonlayer}
	\begin{pgfonlayer}{edgelayer}
		\draw (1.center) to (0);
	\end{pgfonlayer}
\end{tikzpicture}
\end{aligned}
\qquad
\mbox{ and } \qquad
\begin{aligned}
\begin{tikzpicture}
	\begin{pgfonlayer}{nodelayer}
		\node [style=none] (0) at (0, 0.75) {};
		\node [style=none] (1) at (-0.5, -0.75) {};
		\node [style=none] (2) at (0.5, -0.75) {};
		\node [style=none] (3) at (0, 0.25) {};
		\node [style=none] (4) at (0.5, -0.25) {};
		\node [style=none] (5) at (-0.5, -0.25) {};
		\node [style=none] (6) at (-0.75, 0.25) {};
		\node [style=none] (7) at (0.75, 0.25) {};
		\node [style=none] (8) at (0.75, -0.25) {};
		\node [style=none] (9) at (-0.75, -0.25) {};
		\node [style=none] (10) at (0, 0) {$C|AB$};
	\end{pgfonlayer}
	\begin{pgfonlayer}{edgelayer}
		\draw (4.center) to (2.center);
		\draw (0.center) to (3.center);
		\draw (5.center) to (1.center);
		\draw (8.center) to (9.center);
		\draw (9.center) to (6.center);
		\draw (6.center) to (7.center);
		\draw (7.center) to (8.center);
	\end{pgfonlayer}
\end{tikzpicture}
\end{aligned}
\]
\end{ex}

We now list a few facts to give a basic understanding of the morphisms in these categories. These morphisms represent predictions of the consequences of the domain variable on the codomain variable. As causal structures are acyclic---giving rise to a `causal direction', or (noncanonical) ordering on the variables---, causal theories similarly have such a direction, and this puts limits on the structure. Indeed, one consequence is that a morphism can only go from an effect to a cause if it factors through the monoidal unit; this represents `forgetting' the outcome of the effect, and reasoning about the outcomes of the cause from other, background, information. We say a map is \emph{inferential} if it does not factor through the monoidal unit.

\begin{prop}
Let $\c_G$ be a causal theory, and let $v,v'$ be atomic variables in $\c_G$. If there exists an inferential map $v \to v'$, then $v$ is an ancestor of $v'$ in $G$.
\end{prop}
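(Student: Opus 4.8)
The plan is to prove the contrapositive in spirit by carefully analysing the structure of an arbitrary morphism $f: v \to v'$ between atomic variables in $\c_G$. The category $\c_G$ is generated, as a strict symmetric monoidal category, by three kinds of morphisms: the comonoid maps $\comult_w$ and $\counit_w$ for each $w \in V$, and the causal mechanisms $[w \mid \pa(w)]: \pa(w) \to w$. Every morphism is therefore a composite of monoidal products of these generators and swaps (which are identities here), subject only to the commutative comonoid relations. The key observation is that the comonoid relations never introduce new causal generators and never connect two distinct atomic variables: comultiplication and counit on $w$ only ever produce or delete copies of the \emph{same} variable $w$. Thus any ``transfer of information'' from one atomic variable to a genuinely different one must pass through a causal mechanism.

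\textbf{Main argument via a dependency relation.}

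First I would make precise a relation $\preceq$ on $V$ by declaring $u \preceq w$ whenever $u \in \pa(w)$, and letting $\leq$ be its reflexive-transitive closure; by construction $u \leq w$ if and only if $u$ is an ancestor of $w$ in $G$ (or $u = w$), using that $\pa$ encodes exactly the arrows of $G$. The heart of the proof is to show that if $f: v \to v'$ is a morphism that does not factor through $I$, then $v \leq v'$. I would argue this by induction on the number of generating morphisms in a chosen decomposition of $f$. The base case is when $f$ is a single generator (or a product of generators with identities): a comonoid map on $v$ has codomain built only from $v$, so if the codomain is the atomic $v'$ we must have $v' = v$; a causal mechanism contributing an atomic output $v'$ forces its inputs to lie in $\pa(v')$, giving the desired ancestry one step at a time. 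For the inductive step I would write $f = g \circ h$ with $h: v \to w$ and $g: w \to v'$, and trace which atomic factor of the intermediate object $w$ the output strand genuinely depends on.

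\textbf{Formalising ``genuine dependence'' — the main obstacle.}

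The hardest part will be making rigorous the intuitive notion that the single output strand labelled $v'$ is ``connected,'' through the string diagram, back to an input strand. The cleanest way is to reason graphically using the coherence theorem for symmetric monoidal categories: represent $f$ as a string diagram, and observe that the only vertices are comonoid nodes (which preserve the variable label along every strand incident to them) and causal-mechanism boxes (whose output label is $v'$ precisely when the box is $[v' \mid \pa(v')]$, and whose input labels lie in $\pa(v')$). Following the output strand $v'$ downward, at each comonoid node the label is unchanged, and the first time the strand originates at a causal-mechanism box we replace $v'$ by some parent $u \in \pa(v')$ and recurse. Because $f$ does not factor through $I$, the strand cannot terminate at a $\counit$-capped component disconnected from the input; it must reach the bottom boundary, i.e. the input $v$. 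Each box traversed contributes one step $u \preceq v'$ in the ancestry relation, and the composite of these steps yields $v \leq v'$, with equality to $v'$ excluded exactly when at least one box is traversed. I would phrase this traversal as an induction on diagram size to avoid hand-waving, and I would flag that the inferential hypothesis is used precisely to guarantee the output strand is connected to the input rather than being ``created'' from a causal mechanism with empty parent set composed after a counit.
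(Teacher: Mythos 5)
Your overall strategy is the same as the paper's: draw the morphism as a string diagram, note that comonoid nodes never change the variable label while each causal-mechanism box contributes one parent-to-child step, and use the inferential hypothesis to connect the output wire back to the input. So the comparison comes down to the one step you yourself identify as the main obstacle, and that step does not close as you have stated it. You assert that because $f$ does not factor through the monoidal unit, the downward traversal from the output ``must reach the bottom boundary.'' But not factoring through the unit only guarantees that the input and output lie in the same \emph{undirected} connected component of the diagram, whereas your traversal requires a \emph{monotone descending} path from the output to the input. These are not equivalent: an undirected path may enter a causal-mechanism box through its output wire and leave through a different input wire, so a diagram can be connected even though every descending path from the output dies at a prior.

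Concretely, let $G$ have vertices $a,b,c$ with arrows $a \to c$ and $b \to c$, and consider the morphism $a \to b$ obtained by applying the prior $[b]\colon \varnothing \to b$, comultiplying the resulting $b$, feeding the input $a$ together with one copy of $b$ into $[c|ab]$, discarding the resulting $c$ with $\counit_c$, and returning the other copy of $b$ as output. Every descending path from the output terminates at the prior $[b]$, so your traversal never reaches the input; yet the diagram is connected, and since the only relations in $\c_G$ are the comonoid equations (in particular there is no relation letting $\counit_c$ absorb the box $[c|ab]$), every rewrite preserves connectivity, so this morphism is inferential while $a$ is not an ancestor of $b$. I should say in fairness that the paper's own proof founders on exactly the same point: its claimed invariant (connected domain and codomain implies ancestry) is not stable under composition, as this example shows. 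The argument can be repaired if one additionally imposes naturality of the counit, $\counit_{w'} \circ f = \counit_w$ for all $f \colon w \to w'$ (which holds in $\set$, $\rel$ and $\stoch$, where the unit is terminal), since then any subdiagram all of whose outputs are discarded can be deleted and inferentiality really does become equivalent to the existence of a descending path from input to output; absent that relation, the step you flagged is not merely in need of formalisation but false.
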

\begin{proof}
We reason via string diagrams to prove the contrapositive. 

Observe that a generating map is inferential if and only if, in its string diagram representation, the domain is topologically connected to the codomain, and that this property is preserved by the counitality relation the comonoid maps must obey. Thus it is also true in general: a morphism in $\c_G$ is inferential if and only if, in all string diagram representations, the domain is topologically connected to the codomain.


Note also that for all generating maps with string diagrams in which the domain and codomain are connected, the domain and codomain are nonempty and each element of the domain is either equal to or an ancestor of each element of the codomain. This property is also preserved by the counitality relation. Thus, if $v$ is not an ancestor of $v'$, in all string diagram representations of a map $v \to v'$ the domain is not topologically connected to the codomain. Taking any such string diagram and continuously deforming it by moving all parts of the component connected to the domain below all parts of the component connected to the codomain, we thus see that the map may be rewritten as one that factors through the monoidal unit.
\end{proof}

In fact the converse also holds: if $v$ is an ancestor of $v'$, then there always exists an inferential map $v \to v'$. Indeed, if $w,w'$ are objects in $\c_G$ containing each atomic variable no more than once---that is, when $w,w' \subseteq V$---, we can construct a map $w \to w'$ in the following manner.

\begin{enumerate}
\item Take the smallest subgraph $G_{w \to w'}$ of $G$ containing the vertices of $w$ and $w'$, and all paths in $V$ that terminate at an element of $w'$ and do not pass through $w$. \footnote{Precisely, this means we take the subgraph with set of arrows $A_{w \to w'} \subseteq A$ consisting of all $a \in A$ for which there exist $a_1, \dots,a_n$ with 
\begin{enumerate}[(i)]
\item $s(a_1) = t(a)$,
\item $s(a_{i+1}) = t(a_{i})$ for $i = 1, \dots, n-1$,
\item $t(a_n) \in w'$, 
\item $s(a_i) \notin w$ for $i = 1, \dots, n$,
\end{enumerate}
and vertices 
\[
V_{w \to w'} = w \cup w' \cup \{v \in V \mid v = s(a) \mbox{ or } t(a) \mbox{ for some } a \in A_{w \to w'}\}.
\]
Note that for each $v \in V_{w \to w'} \setminus w$, the set of parents of $v$ in this subgraph is equal to the set of parents of $v$ in the whole graph $G$.
}

\item For each vertex $v \in V_{w \to w'}$ let $k_v$ be the number of arrows of $A_{w \to w'}$ with source $v$. Then:
\begin{enumerate}[(i)]
\item for each $v \in w$ take the string diagram representing the composition of $k_v-1$ comultiplications on $v$ or, when $k_v =0$, the counit of $v$.

\item For each $v \in w'$ take the string diagram for $[v|pa(v)]$ composed with a sequence of $k_v$ comultiplications on $v$.

\item For each $v \in V_{w \to w'} \setminus (w \cup w')$ take the string diagram for $[v|pa(v)]$, composed with either a sequence of $k_v-1$ comultiplications on $v$ or, if $k_v=0$, composed with the counit on $v$.
\end{enumerate}

\item From this collection of string diagrams, create a single diagram by connecting an output of one string diagram to an input of another if the set $A_{w \to w'}$ contains an arrow from the indexing vertex of the first diagram to the indexing vertex of the second.
\end{enumerate}

Due to the symmetry of the monoidal category $\c_G$ and the associativity of the comultiplication maps, this process uniquely defines a string diagram representing a morphism from $w$ to $w'$. Moreover, this map is inferential whenever there exists some $v \in w$ that is an ancestor of some $v' \in w'$. These maps are in a certain sense the uniquely most efficient ways of predicting probabilities on $w'$ using information about $w$, and will play a special role in what follows. For short, we will call these maps \emph{causal conditionals} and write them maps $[w'||w]$, or simply $[w']$ when $w = \varnothing$. In this last case, we will also call the map $[w']$ the \emph{prior on $[w']$}.

\begin{ex}
This construction is a little abstruse on reading, but the main idea is simple and an example should make it much clearer. Let $G$ be the causal structure
\begin{center}
\begin{tikzpicture}
  \node (A) at (0.5,2) {$A$};
  \node  (B) at (-1,1)  {$B$};
  \node (C) at (0,0)  {$C$};
  \node (D) at (1,-1)  {$D$};
  \node (E) at (-0.5,-2)  {$E$};
  \node (F) at (1,-3)  {$F$};  
  \foreach \from/\to in {A/B,A/C,B/C,C/D,C/E,D/E,E/F}
    \draw[->,thick] (\from) -- (\to);
\end{tikzpicture}
\end{center}
and suppose that we wish to compute the causal conditional $[DE||B]$. Step 1 gives the subgraph $G_{B \to DE}$
\begin{center}
\begin{tikzpicture}
  \node (A) at (0.5,2) {$A$};
  \node [style=circ] (B) at (-1,1)  {$B$};
  \node (C) at (0,0)  {$C$};
  \node [style=circ] (D) at (1,-1)  {$D$};
  \node [style=circ] (E) at (-0.5,-2)  {$E$};
  \foreach \from/\to in {A/C,B/C,C/D,C/E,D/E}
    \draw[->,thick] (\from) -- (\to);
\end{tikzpicture}
\end{center}
consisting of all paths to $D$ or $E$ not passing through $B$. 

Step 2 then states that the causal conditional $[DE||B]$ comprises the maps
\[
\id_B = 
\begin{aligned}
\begin{tikzpicture}
	\begin{pgfonlayer}{nodelayer}
		\node [style=none] (1) at (0, 0.75) {};
		\node [style=none] (2) at (0, -0.75) {};
	\end{pgfonlayer}
	\begin{pgfonlayer}{edgelayer}
		\draw (1.center) to (2.center);
	\end{pgfonlayer}
\end{tikzpicture}
\end{aligned}
\qquad
\begin{aligned}
\begin{tikzpicture}
	\begin{pgfonlayer}{nodelayer}
		\node [style=none] (0) at (0, -0.75) {};
		\node [style=none] (2) at (0, 0.25) {};
		\node [style=sq] (3) at (0, -0.25) {$D|C$};
		\node [style=none] (4) at (0.5, 0.75) {};
		\node [style=none] (6) at (-0.5, 0.75) {};
	\end{pgfonlayer}
	\begin{pgfonlayer}{edgelayer}
		\draw [in=0, out=-90, looseness=0.75] (4.center) to (2.center);
		\draw [in=180, out=-90, looseness=0.75] (6.center) to (2.center);
		\draw (2.center) to (3);
		\draw (3) to (0.center);
	\end{pgfonlayer}
\end{tikzpicture}
\end{aligned}
\qquad
\begin{aligned}
\begin{tikzpicture}
	\begin{pgfonlayer}{nodelayer}
		\node [style=none] (0) at (0, 1.25) {};
		\node [style=none] (1) at (-0.5, -0.25) {};
		\node [style=none] (2) at (0.5, -0.25) {};
		\node [style=none] (3) at (0, 0.75) {};
		\node [style=none] (4) at (0.5, 0.25) {};
		\node [style=none] (5) at (-0.5, 0.25) {};
		\node [style=none] (6) at (-0.5, -0.5) {};
		\node [style=none] (7) at (0.5, -0.5) {};
		\node [style=none] (8) at (0, 1.5) {};
		\node [style=none] (9) at (-0.75, 0.75) {};
		\node [style=none] (10) at (0.75, 0.75) {};
		\node [style=none] (11) at (0.75, 0.25) {};
		\node [style=none] (12) at (-0.75, 0.25) {};
		\node [style=none] (13) at (0, 0.5) {$E|DC$};
	\end{pgfonlayer}
	\begin{pgfonlayer}{edgelayer}
		\draw (4.center) to (2.center);
		\draw (0.center) to (3.center);
		\draw (5.center) to (1.center);
		\draw (11.center) to (12.center);
		\draw (12.center) to (9.center);
		\draw (9.center) to (10.center);
		\draw (10.center) to (11.center);
	\end{pgfonlayer}
\end{tikzpicture}
\end{aligned}
\qquad
\begin{aligned}
\begin{tikzpicture}
	\begin{pgfonlayer}{nodelayer}
		\node [style=tri] (0) at (0, -0.25) {$A$};
		\node [style=none] (1) at (0, -1) {};
		\node [style=none] (2) at (0, 0.75) {};
		\node [style=none] (3) at (0, 1) {};
	\end{pgfonlayer}
	\begin{pgfonlayer}{edgelayer}
		\draw (2.center) to (0);
	\end{pgfonlayer}
\end{tikzpicture}
\end{aligned}
\qquad
\begin{aligned}
\begin{tikzpicture}
	\begin{pgfonlayer}{nodelayer}
		\node [style=none] (0) at (-0.5, -0.75) {};
		\node [style=none] (1) at (0.5, -0.75) {};
		\node [style=none] (2) at (0, 0.25) {};
		\node [style=none] (3) at (0.5, -0.25) {};
		\node [style=none] (4) at (-0.5, -0.25) {};
		\node [style=none] (5) at (-0.75, 0.25) {};
		\node [style=none] (6) at (0.75, 0.25) {};
		\node [style=none] (7) at (0.75, -0.25) {};
		\node [style=none] (8) at (-0.75, -0.25) {};
		\node [style=none] (9) at (0, 0) {$C|AB$};
		\node [style=none] (10) at (0, 0.5) {};
		\node [style=none] (11) at (0.5, 1) {};
		\node [style=none] (12) at (-0.5, 1) {};
		\node [style=none] (13) at (-0.5, 1) {};
	\end{pgfonlayer}
	\begin{pgfonlayer}{edgelayer}
		\draw (3.center) to (1.center);
		\draw (4.center) to (0.center);
		\draw (7.center) to (8.center);
		\draw (8.center) to (5.center);
		\draw (5.center) to (6.center);
		\draw (6.center) to (7.center);
		\draw [in=0, out=-90, looseness=0.75] (11.center) to (10.center);
		\draw [in=180, out=-90, looseness=0.75] (13.center) to (10.center);
		\draw (10.center) to (2.center);
	\end{pgfonlayer}
\end{tikzpicture}
\end{aligned}
\]
and we then compose these mimicking the topology of the graph $G_{B \to DE}$ to give the map
\[
[DE||B]\, = \quad
\begin{aligned}
\begin{tikzpicture}
	\begin{pgfonlayer}{nodelayer}
		\node [style=none] (0) at (0.5, -3) {};
		\node [style=tri] (1) at (-0.5, -2.5) {$A$};
		\node [style=none] (2) at (0, -1.25) {};
		\node [style=none] (3) at (-0.5, -1.75) {};
		\node [style=none] (4) at (0.5, -1.75) {};
		\node [style=none] (5) at (-0.75, -1.25) {};
		\node [style=none] (6) at (0.75, -1.25) {};
		\node [style=none] (7) at (0.75, -1.75) {};
		\node [style=none] (8) at (-0.75, -1.75) {};
		\node [style=none] (9) at (0, -1.5) {$C|AB$};
		\node [style=none] (10) at (0, -1) {};
		\node [style=none] (11) at (0.5, -0.5) {};
		\node [style=none] (12) at (-0.5, -0.5) {};
		\node [style=none] (13) at (-0.5, -0.5) {};
		\node [style=none] (14) at (-1, 0.75) {};
		\node [style=none] (15) at (0, 0.75) {};
		\node [style=none] (16) at (-0.5, 0.25) {};
		\node [style=none] (17) at (-1, 0.75) {};
		\node [style=sq] (18) at (-0.5, -0.25) {$D|C$};
		\node [style=none] (19) at (1, 0.75) {};
		\node [style=none] (20) at (-0.25, 1.25) {};
		\node [style=none] (21) at (0.5, 1) {$E|DC$};
		\node [style=none] (22) at (1.25, 1.25) {};
		\node [style=none] (23) at (0, 0.75) {};
		\node [style=none] (24) at (0.5, 2) {};
		\node [style=none] (25) at (1.25, 0.75) {};
		\node [style=none] (26) at (-0.25, 0.75) {};
		\node [style=none] (27) at (0.5, 1.25) {};
		\node [style=none] (28) at (-1, 2) {};
	\end{pgfonlayer}
	\begin{pgfonlayer}{edgelayer}
		\draw (3.center) to (1);
		\draw (4.center) to (0.center);
		\draw (7.center) to (8.center);
		\draw (8.center) to (5.center);
		\draw (5.center) to (6.center);
		\draw (6.center) to (7.center);
		\draw [in=0, out=-90, looseness=0.75] (11.center) to (10.center);
		\draw [in=180, out=-90, looseness=0.75] (13.center) to (10.center);
		\draw (10.center) to (2.center);
		\draw [in=0, out=-90, looseness=0.75] (15.center) to (16.center);
		\draw [in=180, out=-90, looseness=0.75] (17.center) to (16.center);
		\draw (16.center) to (18);
		\draw (18) to (13.center);
		\draw (25.center) to (26.center);
		\draw (26.center) to (20.center);
		\draw (20.center) to (22.center);
		\draw (22.center) to (25.center);
		\draw (24.center) to (27.center);
		\draw [in=75, out=-90] (19.center) to (11.center);
		\draw (23.center) to (15.center);
		\draw (28.center) to (17.center);
	\end{pgfonlayer}
\end{tikzpicture}
\end{aligned}
\] 
\end{ex}

\section{Interpretations of causal theories}

Causal theories express abstractly avenues of causal reasoning, but this serves no purpose in describing specific causal relationships until we attach meanings, or an interpretation, to the objects and morphisms of the theory. The strength of separating out the syntax of reasoning is that these interpretations may now come from any symmetric monoidal category. Stated formally, let $\c$ be a causal theory, and let $\mathcal D$ be any symmetric monoidal category. Then a \emph{model of $\c$ in $\mathcal D$}, or just a \emph{causal model}, is a strong monoidal functor $M: \c \to \mathcal D$.

We explore the basic properties of causal models in a few categories. To demonstrate the basic ideas, we first take a brief look at models in $\set$ and $\rel$; models in these categories will be useful for describing deterministic and possibilistic causal relationships respectively. While $\meas$ is another obvious candidate setting for examining causal models, we merely note that causal models here behave somewhat similarly to $\set$ and move on to $\stoch$, the main category of interest. Here causal models generalise Bayesian networks. As Bayesian networks are known to provide a useful tool for the discussion of causality, this lends support to the idea that the richer structure of causal models in $\stoch$ do too.

\subsection*{Models in $\set$}

Due to its familiarity, we begin our discussion of causal models with an examination of the forms they take in $\set$. In both $\set$ and $\rel$ the objects are sets. For the purposes of causal models, it is useful to view these sets as variables, with the elements the possible outcomes of the variable. With this interpretation, we can understand $\set$ as a subcategory of $\meas$ in which every measurable space is discrete, making it possible to measure any subset of the outcomes of each variable. Morphisms in $\set$---that is, set functions---then assign a single outcome of the codomain variable to each outcome of the domain variable, and so can be said to describe deterministic causal relationships.

Given a causal theory $\c$, a model of $\c$ in $\set$ by definition consists of a strong monoidal functor $M: \c \to \set$. To specify such a functor up to isomorphism, it is enough to specify the image of each atomic variable and each generating map, subject to the constraints that the generating maps chosen are well-typed with respect to the chosen images of the atomic variables, and that the images of the comultiplication and counit obey the laws of a commutative comonoid. Indeed, once these are specified, the values of the functor on the remaining objects and morphisms of $\c$ are, up to isomorphism, determined by the definition of a strong monoidal functor. Note also that as long as the aforementioned constraints are fulfilled we have a well-defined strong monoidal functor.

We first observe, as we will also in the case of $\stoch$, that each object of $\set$ has a unique comonoid structure, and this comonoid is commutative. To wit, for each set $X$, there is a unique map $X \to \{\ast\}$, taking the product of this map and the identity map $X \to X$ gives the projection maps $X \times X \to X$, and the only function $X \to X \times X$ that composes to the identity with the projection map on each factor is the diagonal map $x \mapsto (x,x)$. Moreover, choosing the diagonal map as a comultiplication indeed gives a commutative comonoid with this map. It is a consequence of this that we need not worry about the comonoid maps; choosing a set for each variable also chooses the comonoid maps for us. 

On the other hand, as the causal mechanisms need not obey any equations, so having defined a map on the objects, any choice of functions from the product set of all the direct causes of each variable to the variable itself then gives a model of the causal theory in $\set$. Each such function returns the outcome of its codomain variable given a configuration of the outcomes of its causes. In this sense a model of a causal theory in $\set$ specifies how causes affect their consequences in a deterministic way.

As maps from the monoidal unit in $\set$ are just a pointings of the target set, the priors $M[w]$ of a causal model are just a choice of an outcome for each of the atomic variables in $w$. In the case of an atomic variable with no causes, the prior $M[v]: \{\ast\} \to Mv$ is simply the causal mechanism $M[v|pa(v)]$, and just picks an element of the set $Mv$. One might interpret this as the `default' state of the variable $v$, and subsequently interpret the prior $M[V]$ on the set of all variables $V$ as the default state of all variables in the system.

We shall see this as a general feature of models of causal theories; the priors specify what can be known about the system in some default state, while more generally the morphisms describe the causal relationships between variables even when not in this state.

\subsection*{Models in $\rel$}

In the category $\rel$, we interpret a relation $r: X \to Y$ to mean that that if $X$ has the outcome $x \in X$, then $Y$ may only take the outcomes $y$ related to $x$ via $r$. This is a possibilistic notion of causality, in which the outcomes of the causes do not determine a single outcome of the effect variable as in $\set$, but only put some constraint on the possible outcomes of the effect variable. 

A curious property of the category $\rel$ is that any relation $X \to Y$ may also be viewed as a relation $Y \to X$ in a natural way---that is, $\rel$ is equipped with a contravariant endofunctor that squares to the identity, or a dagger functor. This means that we have a way of reversing the direction any morphism we choose, and in this sense $\rel$ itself is acausal. This makes causal models all the more useful when working in $\rel$, as they provide a way of privileging certain relations with a causal direction. 

For any object in $\rel$, we may view the functions forming the unique comonoid on this set in $\set$ as relations, and hence have a commutative comonoid in $\rel$. An interesting collection of causal models $M: \c \to \rel$ in $\rel$ are those in which all objects are given these comonoid structures. Note that a map $s: \{\ast\} \to X$ from the monoidal unit to an object $X$ in $\rel$ is simply a subset $S$ of $X$ and, assuming $X$ has the comonoid structure in which the comonoid maps are functions, for any relation $r: X \to Y$ the composite
\[
\begin{tikzpicture}
	\begin{pgfonlayer}{nodelayer}
		\node [style=sq] (0) at (0.55, 0.6) {$r$};
		\node [style=none] (1) at (0, 0) {};
		\node [style=none] (2) at (0.55, 0.5) {};
		\node [style=none] (3) at (-0.55, 1.25) {};
		\node [style=tri] (4) at (0, -0.55) {\phantom{'}$s$\phantom{'}};
		\node [style=none] (5) at (-0.55, 0.5) {};
		\node [style=none] (6) at (0.55, 1.25) {};
	\end{pgfonlayer}
	\begin{pgfonlayer}{edgelayer}
		\draw (1.center) to (4);
		\draw [bend left=45] (1.center) to (5.center);
		\draw [bend right=45] (1.center) to (2.center);
		\draw (0) to (2.center);
		\draw (3.center) to (5.center);
		\draw (6.center) to (0);
	\end{pgfonlayer}
\end{tikzpicture}
\]
is then equal to the set
\[
\{(x,y) \in X \times Y \mid x \in S, y\sim_r x\}.
\]
Thus when the comonoid maps of the model are those of $\rel$ inherited from $\set$, it is easy to see that the priors $[w]$ in $\rel$ are given by the subset of the product set of the atomic variables in $w$ consisting of all joint outcomes that are possible given the constraints of the causal mechanisms.

In this setting, however, commutative comonoids are more general; for example, any collection of abelian groups forms a commutative comonoid on the union of the sets of elements of these groups \cite{Pav}. We leave examination of causal structures for these other comonoid structures, and their interpretations, for later work.

\subsection*{Models in $\stoch$}

We begin our discussion of causal models in $\stoch$ by showing that for $\stoch$ too we need not worry about selecting comonoid maps; the deterministic comonoid structure $\stoch$ inherits from $\set$ is the only comonoid structure on each object.

\begin{lem}
Each object of $\stoch$ has a unique comonoid structure. Moreover, this comonoid structure is commutative.
\end{lem}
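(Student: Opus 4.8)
The plan is to pin down the comonoid data one piece at a time, using that the monoidal unit $I$ of $\stoch$ is the terminal object $(\ast,\{\varnothing,\ast\})$ recorded above. A comonoid structure on a space $X$ consists of a counit $e\colon X \to I$ and a comultiplication $c\colon X \to X \ot X$ subject to counitality and coassociativity, and we must show there is exactly one such pair and that it is commutative. The counit is immediate: as $I$ is terminal there is a unique morphism $X \to I$, so $e$ is forced and there is nothing left to choose.

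For the comultiplication I would first translate the two counitality equations into marginal conditions on the measures $c_x$ on $X \ot X$. Unwinding the composite $(\id_X \ot e)\circ c$ through the integral definition of composition in $\stoch$, its value at $(x,A)$ for $A \in \S_X$ is $c_x(A \times X)$; since the identity of $X$ in $\stoch$ sends $x$ to the point measure $\d_x$, counitality forces the first marginal $c_x(-\times X)$ to equal $\d_x$, and symmetrically the second marginal $c_x(X\times -)$ to equal $\d_x$.

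The technical heart is then to show these two marginal conditions already force $c_x$ to be the point measure $\d_{(x,x)}$ concentrated at the diagonal. I would argue on measurable rectangles: if $x \in A \cap B$ then $c_x(A^c \times X) = \d_x(A^c)=0$ and $c_x(X \times B^c) = \d_x(B^c) = 0$, so $(A\times B)^c \subseteq (A^c\times X)\cup(X\times B^c)$ is null and $c_x(A\times B)=1$; whereas if $x \notin A$ then $A\times B \subseteq A \times X$ with $c_x(A \times X)=\d_x(A)=0$, forcing $c_x(A\times B)=0$ (and symmetrically if $x \notin B$). Hence $c_x$ and $\d_{(x,x)}$ agree on every measurable rectangle; since the rectangles form a $\pi$-system generating $\S_X \ot \S_X$ (Example~\ref{ex:prodsga}, where every measurable set is exhibited as a countable union of rectangles) and both are probability measures, the uniqueness theorem for measures gives $c_x = \d_{(x,x)}$ throughout. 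Thus $c$ is exactly the deterministic stochastic map $\d_\Delta$ induced by the diagonal function $\Delta\colon x \mapsto (x,x)$, namely the comonoid that $\stoch$ inherits from $\set$.

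Two features keep the remaining work light. First, the argument above used only counitality, so uniqueness is automatic once one records that this inherited pair $(c,e)$ really is a comonoid, which is a direct check (or a transport along the symmetric monoidal functor $\d\colon \meas \to \stoch$). Second, commutativity is immediate, since the swap fixes the diagonal point mass $\d_{(x,x)}$ and hence $\s_{X,X}\circ c = c$. The main obstacle is the measure-theoretic uniqueness step of the previous paragraph, extending agreement on rectangles to the whole product $\s$-algebra; the explicit description of $\S_X \ot \S_Y$ in Example~\ref{ex:prodsga} is precisely the tool that makes it routine.
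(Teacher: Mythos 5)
Your proof is correct and follows essentially the same route as the paper's: terminality of the unit forces the counit, counitality forces both marginals of $c_x$ to be the point measure at $x$, and this pins $c_x$ down as the diagonal point measure, with existence and commutativity handled by transporting the diagonal comonoid from $\meas$. The only (immaterial) difference is the final measure-theoretic step, where you pass from rectangles to the full product $\s$-algebra via the $\pi$-system uniqueness theorem, whereas the paper argues directly that $c_x(A)\ge c_x(C_0\times D_0)=1$ for any measurable $A$ containing $(x,x)$, using the countable-union-of-rectangles description of Example \ref{ex:prodsga}.
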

\begin{proof}
Fix an object $(X, \S)$ in $\stoch$. We first show the existence of a comonoid structure on $X$ by showing that the stochastic maps
\[
\comult: (X, \S) \longrightarrow (X \times X, \S \ot \S);
\]
defined by
\ba
\comult: X \times (\S \ot \S) &\longrightarrow [0,1]; \\
(x,A) &\longmapsto \begin{cases} 1 & \textrm{if } (x,x) \in A, \\ 0 & \textrm{if } (x,x) \notin A, \end{cases}
\ea
and
\[
\counit: (X,\S) \longrightarrow (\ast, \{\varnothing, \ast\})
\]
defined by
\ba
\counit: X \times \{\varnothing, \ast\} &\longrightarrow [0,1]; \\
(x,\ast) &\longmapsto 1, \\
(x,\varnothing) &\longmapsto 0,
\ea
form the comultiplication and counit for a comonoid structure respectively. 

Indeed, observe that both these maps are deterministic stochastic maps, with $\comult$ specified by the measurable function $X \to X \times X; x \mapsto (x,x)$ and $\counit\,$ specified by $X \to \ast; x \mapsto \ast$. From here it is straightforward to verify that these functions obey coassociativity and counitality as functions in $\meas$, and hence these identities are true in $\stoch$.

We next prove uniqueness. As the monoidal unit $(\ast,\{\varnothing, \ast\})$ is terminal, there is a unique stochastic map from $(X,\S)$ to the terminal object, and so $\counit\,$ is the only possible choice for the counit of a comonoid on $(X,\S)$. Suppose that $\d: (X,\S) \to (X \times X, \S \ot \S)$ is a stochastic map such that $(\d, \counit)$ forms a comonoid. We will show in fact that for all $x \in X$ and $A \in \S \ot \S$ we have
\[
\d(x,A) =  \begin{cases} 1 & \textrm{if } (x,x) \in A; \\ 0 & \textrm{if } (x,x) \notin A, \end{cases}
\]
and so $\d = \comult$. Note that as $\d_x = \d(x,-)$ is a probability measure on $(X \times X, \S \ot \S)$ it is enough to show that $\d(x,A) =1$ whenever $(x,x) \in A$.

To begin, note that counitality on the right implies that
\[
\begin{aligned}
\begin{tikzpicture}
	\begin{pgfonlayer}{nodelayer}
		\node [style=none] (0) at (-0.5, 1.5) {};
		\node [style=dot] (1) at (0.5, 1.25) {};
		\node [style=none] (2) at (0, -0.25) {};
		\node [style=none] (3) at (-0.5, 0.75) {};
		\node [style=none] (4) at (0.5, 0.75) {};
		\node [style=none] (5) at (0, 0.25) {};
		\node [style=none] (6) at (-0.75, 0.75) {};
		\node [style=none] (7) at (0.75, 0.75) {};
		\node [style=none] (8) at (0.75, 0.25) {};
		\node [style=none] (9) at (-0.75, 0.25) {};
		\node [style=none] (10) at (0, 0.5) {$\d$};
	\end{pgfonlayer}
	\begin{pgfonlayer}{edgelayer}
		\draw (0.center) to (3.center);
		\draw (5.center) to (2.center);
		\draw (1) to (4.center);
		\draw (8.center) to (9.center);
		\draw (9.center) to (6.center);
		\draw (6.center) to (7.center);
		\draw (7.center) to (8.center);
	\end{pgfonlayer}
\end{tikzpicture}
\end{aligned}
\quad 
= 
\quad 
\begin{aligned}
\begin{tikzpicture}
	\begin{pgfonlayer}{nodelayer}
		\node [style=none] (0) at (0, .75) {};
		\node [style=none] (1) at (0, -1) {};
	\end{pgfonlayer}
	\begin{pgfonlayer}{edgelayer}
		\draw (0.center) to (1.center);
	\end{pgfonlayer}
\end{tikzpicture}
\end{aligned}
\]
so for all $x \in X$ and $B \in \S$ we have
\ba
\d_x(B \times X)  = \d(x,B \times X) &= \int_{X\times X} \chi_{B \times X} \, d\d_x \\
&= \int_{X\times X} \idline \,\counit (-,-,B) \,d\d_x \\
&= \begin{aligned}
\begin{tikzpicture}[scale=.55]
	\begin{pgfonlayer}{nodelayer}
		\node [style=none] (0) at (-0.5, 1.5) {};
		\node [style=dot] (1) at (0.5, 1.25) {};
		\node [style=none] (2) at (0, -0.25) {};
		\node [style=none] (3) at (-0.5, 0.75) {};
		\node [style=none] (4) at (0.5, 0.75) {};
		\node [style=none] (5) at (0, 0.25) {};
		\node [style=none] (6) at (-0.75, 0.75) {};
		\node [style=none] (7) at (0.75, 0.75) {};
		\node [style=none] (8) at (0.75, 0.25) {};
		\node [style=none] (9) at (-0.75, 0.25) {};
		\node [style=none] (10) at (0, 0.5) {$_\d$};
	\end{pgfonlayer}
	\begin{pgfonlayer}{edgelayer}
		\draw (0.center) to (3.center);
		\draw (5.center) to (2.center);
		\draw (1) to (4.center);
		\draw (8.center) to (9.center);
		\draw (9.center) to (6.center);
		\draw (6.center) to (7.center);
		\draw (7.center) to (8.center);
	\end{pgfonlayer}
\end{tikzpicture}
\end{aligned}(x,B) \\
&= \id_{(X,\S)} (x,B) \\
&= \begin{cases} 1 & \textrm{if } x \in B; \\ 0 & \textrm{if } x \notin B. \end{cases}
\ea
Similarly, counitality on the left implies that for all $x \in X$ and $B \in \S$ we have
\[
\d_x(X \times B) =  \begin{cases} 1 & \textrm{if } x \in B; \\ 0 & \textrm{if } x \notin B. \end{cases}
\]
We shall use these facts in the following.

Fix $x \in X$ and let now $A \in \S \ot \S$ be such that $(x,x) \in A$. Recalling our characterisation of product $\s$-algebras in Example \ref{ex:prodsga}, we may assume $A$ is of the form $A = \cup_{i \in I} (C_i \times D_i)$, where $I$ is a countable set and $C_i, D_i \in \S$ for all $i \in I$. There thus exists $0 \in I$ such that $(x,x) \in C_0 \times D_0$, and hence $x \in C_0$ and $x \in D_0$. Since we have shown above that this implies that $\d_x(C_0^c\times X) = \d_x(X \times D_0^c) = 0$, we then have
\ba
\d_x(A) &= \d_x(\cup_{i\in I} (C_i \times D_i)) \\
&\ge \d_x(C_0 \times D_0)  \\
&= \d_x((C_0 \times X) \cap (X \times D_0)) \\
&= 1- \d_x((C_0^c \times X) \cup (X \times D_0^c)) \\
&\ge 1- (\d_x(C_0^c \times X) + \d_x(X \times D_0^c)) \\
&=1 ,
\ea
so $\d(x,A) =1$ as required. 

It remains to check that this comonoid is commutative. Recalling that the swap $\swa: (X \times X, \S \ot \S) \longrightarrow (X \times X, \S \ot \S)$ on $(X,\S) \ot (X,\S)$ is the deterministic stochastic map given by $X \times X \to X \times X; (x,y) \mapsto (y,x)$, it is immediately clear that the comultiplication $\comult$ is commutative. 
\end{proof}

Arguing as for models of $\set$, given a causal theory $\c$, strong monoidal functors $P: \c \to \stoch$ are thus specified by arbitrary choices of measurable space for each atomic variable, and a subsequent arbitrary choices of causal mechanisms of the required domain and codomain. 

These interpretations of the causal mechanisms give rise to a joint probability measure compatible with the causal structure underlying the causal theory. Indeed, this can be seen as the key difference between a model of a causal theory in $\stoch$ and a Bayesian network: models of causal theories privilege factorisations, while Bayesian networks only care about the joint probability measure.

\begin{thm} \label{th.main}
Let $G$ be a directed acyclic graph with vertex set $V$ and let \linebreak \mbox{$P: \c_G \to \stoch$} be a model of the causal theory $\c_G$ in $\stoch$. Then the causal \linebreak structure $G$ and the probability measure defined by the prior $P[V]$ are compatible.
\end{thm}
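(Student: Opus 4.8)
The plan is to read the required factorization directly off the explicit string-diagram construction of the prior given in Section 4.1, and then push that diagram through the functor $P$. Recall that the causal conditional $[V]\colon \varnothing \to V$ is assembled by taking, for each vertex $v$, the causal mechanism $[v|\pa(v)]$ post-composed with enough comultiplications $\comult_v$ to supply one copy of the outcome of $v$ to each of its children (or the counit $\counit_v$ when $v$ has no children), and then wiring outputs to inputs exactly as the arrows of $G$ prescribe. Since $[V]$ is a map $\varnothing \to V$, its image $P[V]$ is, by Example \ref{ex:pmasm}, a probability measure on $\prod_{v \in V} X_v$, which is the joint measure in the statement. My aim is to show that applying $P$ to this composite yields precisely the iterated integral appearing in the definition of compatibility.

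First I would record what $P$ does to the constituent pieces. As $P$ is a strong monoidal functor, it sends categorical composition to composition in $\stoch$ (that is, integration), the monoidal product to the product of stochastic maps (product measures), and swaps to swaps. By the lemma establishing that each object of $\stoch$ carries a unique commutative comonoid structure, each $\comult_v$ is sent to the deterministic copy map $x \mapsto (x,x)$ on $X_v$ and each $\counit_v$ to the unique map to the terminal object. Setting $P_{X_v \mid \pa(X_v)} := P[v \mid \pa(v)]$ gives, for every $v$, a stochastic map $\prod_{u \in \pa(v)} X_u \to X_v$ of exactly the type required. Exhibiting stochastic maps of this type together with the displayed identity is all that compatibility demands, so no separate check that these are ``conditionals for $P$'' is needed.

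Next I would fix an ancestral ordering $v_1, \dots, v_n$ of $V$ (one exists since $G$ is acyclic) and argue by induction along this order. The inductive claim is that the sub-diagram built from $v_1, \dots, v_k$ computes, after applying $P$, the joint measure $\int \dots \int P_{X_k\mid\pa(X_k)} \dots P_{X_1}$ on $X_{v_1} \times \dots \times X_{v_k}$, together with the dangling copies of each $v_i$ still to be consumed by its later children. Passing from $k$ to $k+1$ introduces $P_{X_{k+1}\mid\pa(X_{k+1})}$; because the ordering is ancestral, every parent of $v_{k+1}$ lies among $v_1,\dots,v_k$, so a copy of each of their outcomes is already available, and composition in $\stoch$ integrates over precisely these. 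Unwinding the definitions of composition and monoidal product on the full diagram then yields
\[
P[V](A_1 \times \dots \times A_n) = \int_{A_1}\int_{A_2}\dots\int_{A_n} P_{X_n\mid\pa(X_n)}\dots P_{X_2\mid\pa(X_2)} P_{X_1},
\]
which is exactly the factorization required for compatibility.

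I expect the main obstacle to be the bookkeeping around the comultiplications: one must verify carefully that applying $P$ to the copy maps and then integrating reproduces the ``same outcome fed to every child'' behaviour, so that the deterministic copies collapse the composite to the stated factorized form rather than introducing spurious extra integrations. This is where the uniqueness-of-comonoid lemma and the composition rule for deterministic maps from Example \ref{ex:dsm} (namely $k \circ \d_f(x,B) = k(f(x),B)$) do the real work, since integrating against the point measure at a diagonal amounts to substituting a single outcome. Everything else is a matter of matching the topology of the diagram of $[V]$ to the nested order of integration dictated by the ancestral ordering.
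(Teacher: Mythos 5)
Your proposal is correct and follows essentially the same route as the paper's own proof: fix an ancestral ordering, decompose the string diagram of $[V]$ into a sequential composite introducing one causal mechanism at a time (justified by coassociativity of the comultiplications), observe that $P$ sends the comonoid maps to the deterministic copy and delete maps, and compute that composing with a copy map amounts to the identity $\big((\id \ot k)\circ \comult \circ \mu\big)(A \times B) = \int_A k(-,B)\,d\mu$, which unwinds to the required iterated integral. The only cosmetic difference is that you make the copies of each $v_i$ eagerly when $v_i$ is introduced, whereas the paper makes them lazily at the stage where each child consumes them; by coassociativity these describe the same morphism, as the paper itself notes.
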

\begin{proof}
Recall that $[V]$ is the prior on the collection consisting of one copy of each of the atomic variables $V$. For each $v \in V$ we have a measurable space $Pv$, and as \mbox{$P[V]: \varnothing \to PV$} is a point of $\stoch$ it defines a joint probability measure on the product measurable space $PV$. We must show that $P[V]$ has the required factorisation.

To this end, choose some ancestral ordering of $V$, writing $V$ now as $\{v_1, \dots, v_n\}$ with the elements numbered according to this ordering. By construction, the string diagram of the prior $[V]$ consists of one copy of each causal mechanism $[v_i|\pa(v_i)]$ and $k_i$ copies of each comultiplication $\comult_{v_i}$, where $k_i$ is the number of children of the vertex $v_i$. As each $v_i$ appears exactly once as the codomain of the causal mechanisms $[v_i|\pa(v_i)]$, the coassociativity of each comonoid and the rules of the graphical calculus for symmetric monoidal categories, show that any way of connecting these elements to form a morphism $\varnothing \to V$ produces the same morphism. In particular, we may build $[V]$ as the composite of the morphisms $[V]_i: v_1\dots v_{i-1} \to v_1\dots v_i$ defined by
\[
[V]_i = \quad 
\begin{aligned}
\begin{tikzpicture}[scale=1.2]
	\begin{pgfonlayer}{nodelayer}
		\node [style=none] (0) at (-2, 1.75) {};
		\node [style=none] (1) at (-2, -0.75) {};
		\node [style=none] (2) at (-1, -0.75) {};
		\node [style=none] (3) at (-1, 1.75) {};
		\node [style=none] (4) at (0.25, -0.75) {};
		\node [style=none] (5) at (1.25, -0.75) {};
		\node [style=none] (6) at (0.25, 0) {};
		\node [style=none] (7) at (1.25, 0) {};
		\node [style=none] (8) at (-0.5, 1) {};
		\node [style=none] (9) at (0.5, 1) {};
		\node [style=none] (10) at (1, 1) {};
		\node [style=none] (11) at (2, 1) {};
		\node [style=none] (12) at (-0.5, 1.75) {};
		\node [style=none] (13) at (0.5, 1.75) {};
		\node [style=none] (14) at (2.25, 1) {};
		\node [style=none] (15) at (1.5, 1.5) {};
		\node [style=none] (16) at (0.75, 1) {};
		\node [style=none] (17) at (0.75, 1.5) {};
		\node [style=none] (18) at (2.25, 1.5) {};
		\node [style=none] (19) at (1.5, 1.75) {};
		\node [style=none] (20) at (-1.5, -0.25) {$\dots$};
		\node [style=none] (21) at (0.75, -0.5) {$\dots$};
		\node [style=none] (22) at (0, 1.5) {$\dots$};
		\node [style=none] (23) at (1.5, 0.75) {$\dots$};
		\node [style=none] (24) at (1.5, 1.25) {$v_i|\pa(v_i)$};
		\node [style=none] (25) at (-1.5, -1) {$(v_j \notin \pa(v_i))$};
		\node [style=none] (26) at (0.75, -1) {$(v_j \in \pa(v_i))$};
		\node [style=none] (27) at (0, 2) {$v_1v_2\dots v_i$};
		\node [style=none] (28) at (0, -1.5) {$v_1v_2\dots v_{i-1}$};
		\node [style=none] (29) at (0, 0.75) {$\dots$};
		\node [style=none] (30) at (-1.5, 1.25) {$\dots$};
		\node [style=none] (31) at (0, 2.25) {};
		\node [style=none] (32) at (0,-1.75) {};
	\end{pgfonlayer}
	\begin{pgfonlayer}{edgelayer}
		\draw (0.center) to (1.center);
		\draw (3.center) to (2.center);
		\draw (6.center) to (4.center);
		\draw (7.center) to (5.center);
		\draw [in=-90, out=165] (7.center) to (9.center);
		\draw [in=-90, out=165] (6.center) to (8.center);
		\draw (8.center) to (12.center);
		\draw (9.center) to (13.center);
		\draw [in=-90, out=15] (6.center) to (10.center);
		\draw [in=-90, out=15] (7.center) to (11.center);
		\draw (14.center) to (16.center);
		\draw (16.center) to (17.center);
		\draw (17.center) to (18.center);
		\draw (18.center) to (14.center);
		\draw (15.center) to (19.center);
	\end{pgfonlayer}
\end{tikzpicture}
\end{aligned}
\]
In words, the morphism $[V]_i$ is the morphism $v_1 \dots v_{i-1} \to v_1 \dots v_i$ constructed by applying a comultiplication to each of the parents of $v_i$, and then applying the causal morphism $[v_i|\pa(v_i)]$. Note that as we have ordered the set $V$ with an ancestral ordering, all parents of $v_i$ do lie in the set of predecessors of $v_i$.

Observe now that given any stochastic map $k: (X, \S_X) \to (Y, \S_Y)$, if $\comult$ is the unique comultiplication on $X$, then the composite
\[
\begin{aligned}
\begin{tikzpicture}
	\begin{pgfonlayer}{nodelayer}
		\node [style=none] (0) at (0, -0.25) {};
		\node [style=none] (1) at (0, -0.75) {};
		\node [style=none] (2) at (0.5, 0.25) {};
		\node [style=none] (3) at (-0.5, 0.25) {};
		\node [style=sq] (4) at (0.5, 0.25) {$k$};
		\node [style=none] (5) at (-0.5, 0.75) {};
		\node [style=none] (6) at (0.5, 0.75) {};
	\end{pgfonlayer}
	\begin{pgfonlayer}{edgelayer}
		\draw [in=0, out=-90, looseness=0.75] (2.center) to (0.center);
		\draw [in=180, out=-90, looseness=0.75] (3.center) to (0.center);
		\draw (0.center) to (1.center);
		\draw (2.center) to (4);
		\draw (4) to (6.center);
		\draw (3.center) to (5.center);
	\end{pgfonlayer}
\end{tikzpicture}
\end{aligned}
: (X, \S_X) \longrightarrow (X \times Y, \S_X \ot \S_Y)
\]
is given by
\[
\big((\id_X \ot k) \circ \comult \big)(x,A\times B) = \int_{X \times X} \chi_A(-)k(-,B) \,d\comult_x = \chi_A(x)k(x,B)
\]
for all $x \in X$, $A \in \S_X$, $B \in \S_Y$. Furthermore, if $\mu: \ast \to (X,\S_X)$ is a measure on $X$, then
\[
\begin{aligned}
\begin{tikzpicture}
	\begin{pgfonlayer}{nodelayer}
		\node [style=none] (0) at (0, -0.25) {};
		\node [style=tri] (1) at (0, -0.75) {$\mu$};
		\node [style=none] (2) at (0.5, 0.25) {};
		\node [style=none] (3) at (-0.5, 0.25) {};
		\node [style=sq] (4) at (0.5, 0.25) {$k$};
		\node [style=none] (5) at (-0.5, 0.75) {};
		\node [style=none] (6) at (0.5, 0.75) {};
	\end{pgfonlayer}
	\begin{pgfonlayer}{edgelayer}
		\draw [in=0, out=-90, looseness=0.75] (2.center) to (0.center);
		\draw [in=180, out=-90, looseness=0.75] (3.center) to (0.center);
		\draw (0.center) to (1);
		\draw (2.center) to (4);
		\draw (4) to (6.center);
		\draw (3.center) to (5.center);
	\end{pgfonlayer}
\end{tikzpicture}
\end{aligned}
: (\ast, \{\varnothing, \ast\}) \longrightarrow (X \times Y, \S_X \ot \S_Y)
\]
is given by
\[
\big((\id_X \ot k) \circ \comult \circ \mu\big)(A\times B) = \int_{A} k(x,B) \, d\mu
\]
for all $A \in \S_X$, $B \in \S_Y$.

Thus, taking the image under $P$, each of the maps $[V]_i$ gives
\[
P[V]_i(x_1, \dots, x_{i-1},A_1 \times \dots \times A_i) = \prod_{j=1}^{i-1} \chi_{A_j}(x_j) P[v_i|\pa(v_i)](x_1, \dots,x_{i-1}, A_i).
\]
for all $x_j \in Pv_j$, $A_j \in \S_{Pv_j}$, and composing them gives
\ba
P[V](A_1 \times \dots \times A_n) &= P[V]_n \circ \dots \circ P[V]_2 \circ P[V]_1(A_1 \times A_2 \times \dots \times A_n) \\
&=  \int_{A_1} P[V]_n \circ \dots \circ P[V]_2(-,A_2 \times \dots \times A_n) dP[v_1|\pa(v_1)] \\
&= \qquad \qquad \vdots \\
&=  \int_{A_1} \int_{A_2} \dots \int_{A_n} \,dP[v_n|\pa(v_n)] \dots dP[v_2|\pa(v_2)]dP[v_1|\pa(v_1)]
\ea
for all $A_j \in \S_{Pv_j}$. This is a factorisation of $P[V]$ of the required type.
\end{proof}

We call the pair $(G,P[V])$ the \emph{Bayesian network induced by $P$}. Thus we see that, given a causal structure and any stochastic causal model of its theory, the induced joint distribution on the atomic variables of the theory forms a Bayesian network with the causal structure. On the other hand, if we have a Bayesian network on this causal structure, we may construct a stochastic causal model inducing this distribution, but only by picking some factorisation of our joint distribution. To iterate, this is the key distinction between Bayesian networks and stochastic causal models: a Bayesian network on a causal structure requires only that there \emph{exist} a factorisation for the distribution respecting the causal structure, while a stochastic causal model \emph{explicitly chooses} a factorisation. 

An advantage of working within a causal theory, rather than just with the induced Bayesian network, is that the additional structure allows neat representations of operations that one might want to do to a Bayesian network. The remainder of this dissertation comprises a brief exploration of this. We conclude this section by noting that the priors of the causal theory represent the marginals of the induced Bayesian network. 

\begin{thm}
Given a model of $P: \c \to \stoch$ of a causal theory $\c_G$ and a set $w$ of atomic variables of $\c_G$, the prior $P[w]$ is equal to the marginal on the product measurable space of the variables in $w$ of the induced Bayesian network.
\end{thm}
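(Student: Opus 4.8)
The plan is to realise marginalisation as composition with counits and then prune the resulting string diagram using the fact that the monoidal unit of $\stoch$ is terminal. Both $P[w]$ and the marginal in question are points $\varnothing \to \prod_{v \in w} Pv$ of $\stoch$, so it suffices to show they agree as stochastic maps. First I would record the key tool: for any stochastic map $k \colon X \to Y$, the terminality of $I = (\ast,\{\varnothing,\ast\})$ in $\stoch$ (noted in the remark following the construction of $\stoch$) gives $\counit_Y \circ k = \counit_X$; diagrammatically, a counit absorbs any box beneath it. This \emph{discarding} property, together with the counitality law already available in $\c_G$, is the only nontrivial ingredient.

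Next I would identify the marginal with a composite of counits. The marginal of the induced joint measure $P[V]$ on $w$ is its push-forward along the projection $\prod_{v\in V}Pv \to \prod_{v \in w}Pv$. Since the projection of a product, read as a deterministic stochastic map, is (up to unitors and swaps) the tensor of identities on the factors in $w$ with the terminal maps $\counit_v$ on the factors $v \in V \setminus w$, the marginal equals $P(e) \circ P[V]$, where $e \colon V \to w$ is the morphism of $\c_G$ tensoring $\id_v$ for $v \in w$ with $\counit_v$ for $v \notin w$. Here I use that $P$ is strong monoidal and that, by the preceding Lemma, the comonoid structure is unique, so $P$ sends the counits of $\c_G$ to the counits of $\stoch$. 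By functoriality the marginal is then $P\big(e \circ [V]\big)$, and it remains to prove $P\big(e \circ [V]\big) = P[w]$.

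Finally I would prune the diagram of $e \circ [V]$ inside $\stoch$. Recall from Theorem \ref{th.main} that $[V]$ is built from one copy of each mechanism $[v|\pa(v)]$ with comultiplications distributing each $v$ to its children and to the final output. Composing with $e$ caps every wire of a variable in $V \setminus w$ with a counit. Processing the vertices in reverse ancestral order, I would use counitality to collapse any comultiplication one of whose legs ends in a counit, and the discarding property to delete any mechanism all of whose outputs have been capped, replacing $\counit_v \circ P[v|\pa(v)]$ by counits on $\pa(v)$. A vertex that is neither in $w$ nor an ancestor of any element of $w$ has, by the time the induction reaches it, all outputs discarded and is therefore removed; a vertex in $w$ or an ancestor of $w$ retains at least one kept output and survives, carrying exactly the comultiplications feeding its surviving children. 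The diagram that remains is supported on $w$ together with its ancestors and coincides, wire for wire, with the image under $P$ of the diagram produced by the causal-conditional recipe for $[w]$, whose Step~1 subgraph is precisely the ancestors of $w$. This yields $P(e \circ [V]) = P[w]$.

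The main obstacle is making the pruning step precise. One must check that the reverse-ancestral induction is well-founded (guaranteed by acyclicity of $G$), that a vertex which is a parent of both surviving and deleted descendants is only \emph{partially} pruned---handled cleanly by counitality returning the identity on the uncapped leg---and that the surviving diagram matches the recipe for $[w]$ with no stray counits left on surviving vertices. Verifying this last bookkeeping against the explicit construction of causal conditionals is where the care is needed; everything else is a routine application of the discarding property and the comonoid laws.
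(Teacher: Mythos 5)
Your proposal is correct and takes essentially the same approach as the paper: both realise marginalisation as composition with counits and then reduce to $P[w]$ using exactly the two ingredients you name, counitality in $\c_G$ and the discarding property coming from the terminality of the monoidal unit in $\stoch$. The only difference is organisational --- the paper performs the pruning as an induction that strips one marginalised variable at a time (splitting into the cases where that variable does or does not have a consequence in the relevant subgraph, which correspond to your ``partially pruned'' and ``fully discarded'' vertices), whereas you do a single global pass in reverse ancestral order; this does not change the substance of the argument.
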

\begin{proof}
We first note a more general fact: given a joint probability measure on $X \times Y$ expressed as a point in $\stoch$, marginalisation over $Y$ can be expressed as the composite of this point with the product of the identity $\idline$ on $X$ and counit $\counit$ on $Y$. Indeed, if $\mu: \ast \to X \times Y$ is a probability measure, then 
\[
(\idline \counit \circ \mu)(A) = \int_{X \times Y} \idline \counit (x,y,A) \, d\mu = \int_{X \times Y} \chi_A(x)\,d\mu = \mu_X(A).
\]
Thus the marginals of $P[V]$ may be expressed by composing $P[V]$ with counits on the factors marginalised over. We wish to show that these are the priors $P[w]$ of $\c$. Reasoning inductively, to show this it is enough to show that the composite of a prior with the product of a counit on one of its factors and identity maps on the remaining factors is again a prior.

Let $w$ be a set of atomic variables of $\c$ and let $v \in w$. We will show that the composite of $P[w]$ with the product of the counit on $v$ and identity on $w\setminus\{v\}$ is equal to the prior $P[w \setminus \{v\}]$. We split into two cases: when $v$ has a consequence in $G_{\varnothing \to w}$, and when $v$ has no consequences in $G_{\varnothing \to w}$. For the first case, observe that $G_{\varnothing \to w} = G_{\varnothing \to w\setminus\{v\}}$ and $k_v \ge 1$. Thus the priors $[w]$ and $[w \setminus\{v\}]$ are the same but for the fact we compose with one extra comultiplication after the causal mechanism $[v|\pa(v)]$ in the case of the prior $[w]$. Thus the composite of $[w]$ with a counit on $v$ is equal to $[w \setminus \{v\}]$ by the counitality law on $v$.

To deal with the second case we must work in $\stoch$ and make use of the fact that the monoidal unit in $\stoch$ is terminal. Indeed, as the monoidal unit in $\stoch$ is terminal, in $\stoch$ we have the equality of morphisms
\[
\begin{aligned}
\begin{tikzpicture}
	\begin{pgfonlayer}{nodelayer}
		\node [style=dot] (0) at (0, 1.25) {};
		\node [style=none] (1) at (-0.5, -0.25) {};
		\node [style=none] (2) at (0.5, -0.25) {};
		\node [style=none] (3) at (-0.5, 0.75) {};
		\node [style=none] (4) at (0, 0.75) {};
		\node [style=none] (5) at (0.5, 0.25) {};
		\node [style=none] (6) at (-0.5, 0.25) {};
		\node [style=none] (7) at (0, -0.5) {$\pa(v)$};
		\node [style=none] (8) at (0, 0) {$\dots$};
		\node [style=none] (9) at (-0.75, 0.75) {};
		\node [style=none] (10) at (0.75, 0.75) {};
		\node [style=none] (11) at (0.75, 0.25) {};
		\node [style=none] (12) at (-0.75, 0.25) {};
		\node [style=none] (13) at (0, 0.5) {$v|\pa(v)$};
	\end{pgfonlayer}
	\begin{pgfonlayer}{edgelayer}
		\draw (5.center) to (2.center);
		\draw (0) to (4.center);
		\draw (6.center) to (1.center);
		\draw (11.center) to (12.center);
		\draw (12.center) to (9.center);
		\draw (9.center) to (10.center);
		\draw (10.center) to (11.center);
	\end{pgfonlayer}
\end{tikzpicture}
\end{aligned}
\quad = \quad
\begin{aligned}
\begin{tikzpicture}
	\begin{pgfonlayer}{nodelayer}
		\node [style=none] (0) at (0, 1.25) {};
		\node [style=none] (1) at (-0.5, -0.25) {};
		\node [style=none] (2) at (0.5, -0.25) {};
		\node [style=dot] (3) at (0.5, 0.5) {};
		\node [style=dot] (4) at (-0.5, 0.5) {};
		\node [style=none] (5) at (0, -0.5) {$\pa(v)$};
		\node [style=none] (6) at (0, 0) {$\dots$};
	\end{pgfonlayer}
	\begin{pgfonlayer}{edgelayer}
		\draw (3) to (2.center);
		\draw (4) to (1.center);
	\end{pgfonlayer}
\end{tikzpicture}
\end{aligned}
\]
As $v$ has no consequences in $G_{\varnothing \to w}$, the causal mechanism $[v|\pa(v)]$ is not followed with any comultiplications in the construction of $[w]$. Thus, after composing $P[w]$ with a counit on $v$ we may invoke the above identity, and then invoke the counitality law for each of the parents of $v$. This means that $P[w]$ is equal to the morphism constructed without the causal mechanism  $[v| \pa(v)]$, and with one fewer comultiplications on each of the parents of $v$. But this is precisely the morphism $P[w\setminus\{v\}]$. This proves the theorem.
\end{proof}

\section{Application: visualising Simpson's paradox}

One of the strengths of causal theories is that their graphical calculi provide a guide to which computations should be made if one wants to respect a causal structure, and in doing so also clarify what these computations mean. An illustration can be found in an exploration of confounding variables and Simpson's paradox. This section owes much to Pearl \cite[Chapter 6]{P}, extending the basics of that discussion with our new graphical notation.

Simpson's paradox refers to the perhaps counterintuitive fact that it is possible for to have data such that, for all outcomes of a confounding variable, a fixed outcome of the independent variable makes another fixed outcome of the dependent variable \emph{more} likely, and yet also that upon aggregation over the confounding variable the same fixed outcome of the independent variable makes the same fixed outcome of the dependent variable \emph{less} likely. This is perhaps best understood through an example.

Consider the following, somewhat simplified, scenario: let us imagine that we wish to test the efficacy of a proposed new treatment for a certain heart condition. In our clinical experiment, we take two groups of patients each suffering from the heart condition and, after treating the patients in the proposed way, record whether they recover. In addition, as we know that having a healthy blood pressure is also an important factor in recovery, we also take records of whether the blood pressure of the patient is within healthy bounds or otherwise at the conclusion of the treatment programme. This gives three binary variables: an independent variable $T = \{t,\neg t\}$, a dependent variable $R = \{r,\neg r\}$, and a third, possibly confounding variable $B = \{b,\neg b\}$, where will think of the variables as representing the truth or otherwise of the following propositions:

\begin{itemize}

\item[T:] the patient receives \emph{treatment} for heart condition.

\item[R:] the patient has \emph{recovered} at the conclusion of treatment.

\item[B:] the patient has healthy \emph{blood pressure} at post-treatment checkup.

\end{itemize}
\noindent Suppose then that our experiment yields the data of Figure \ref{fig.sp}.

\begin{figure}[h] 
\begin{center}
\begin{tabular}{cc||cc}
&\multicolumn{3}{c}{\phantom{MM} T} \\
&&$t$&$\neg t$ \\ \cline{2-4}
\multirow{2}{*}{$R$}&$r$ & 39 & 42 \\
&$\neg r$ & 61 & 58 \\
\end{tabular}
\hspace{1.5cm}
\begin{tabular}{cc||cc|cc}
&\multicolumn{5}{c}{\phantom{M} TB} \\
&&$t,b$&$\neg t,b$&$t,\neg b$&$\neg t,\neg b$ \\ \cline{2-6}
\multirow{2}{*}{$R$} & $r$ & 30 & 40 & 9 & 2 \\
&$\neg r$ & 10 & 40 & 51 & 18 \\
\end{tabular}
\end{center}
\caption{\small{The table on the left displays the experiment data for the treatment and recovery of patients, while the table on the right displays the experiment when further subdivided according to the blood pressure of the patients measured post-treatment.}}
\label{fig.sp}
\end{figure}

In these data we see the so-called paradox: for both patients with healthy and unhealthy blood pressure, treatment seems to significantly improve the chance of recovery, with the recovery rates increasing from 50\% to 80\% and from 10\% to 15\% respectively when patients are treated. On the other hand, when the studies are taken a whole, it seems treatment has no significant effect on the recovery rate, which drops slightly from 42\% to 39\%. Given this result, it is not clear whether the experiment indicates that treatment improves or even impairs chance of recovery. Should we or should we not then recommend the treatment?

The answer depends on the causal relationships between our variables. Suppose that the treatment acts in part via affecting blood pressure. Then the causal structure of the variables is given by the graph
\begin{center}
\begin{tikzpicture}
  \node (I) at (0,1.5) {$T$};
  \node (D) at (0,-1.5)  {$R$};
  \node (C) at (1,0)  {$B$};
  \foreach \from/\to in {I/D,I/C,C/D}
    \draw[->,thick] (\from) -- (\to);
\end{tikzpicture}
\end{center}
In this case we should make our decision with respect to the aggregated data: else when we condition on the post-treatment blood pressures we eliminate information about how the treatment is affecting blood pressure, and so eliminate information about an important causal pathway between treatment and recovery. We therefore should not recommend treatment---although when we control for blood pressure the treatment seems to improve chances of recovery, the treatment also makes it less likely that a healthy blood pressure will be reached, offsetting any gain.

On the other hand, suppose that the treatment works in a way that has no effect on blood pressure. Then from the fact that the blood pressure and treatment variables are not independent we may deduce that the blood pressure variable biased selection for the treatment trial, and so the causal structure representing these variables is
\begin{center}
\begin{tikzpicture}
  \node (I) at (0,0.5) {$T$};
  \node (D) at (0,-1.5)  {$R$};
  \node (C) at (1,1.5)  {$B$};
  \foreach \from/\to in {I/D,C/I,C/D}
    \draw[->,thick] (\from) -- (\to);
\end{tikzpicture}
\end{center}
Here we \emph{should} pay attention to the data when divided according to blood pressure, as by doing this we control for the consequences of this variable. We then see that no matter whether a patient has factors leading to healthy or unheathly blood pressure, the treatment raises their chance of recovery by a significant proportion.

These ideas are codified in the corresponding causal theories and their maps, with the causal effect of treatment on recovery expressed via the causal conditional $[R||T]$. For the first structure, let the corresponding causal theory be $\c_1$, and the data give the following interpretations of the causal mechanisms $P: \c_1 \to \finstoch$:
\[
\begin{aligned}
\begin{tikzpicture}
	\begin{pgfonlayer}{nodelayer}
		\node [style=tri] (0) at (0, 0) {$T$};
		\node [style=none] (1) at (0, 0.60) {};
		\node [style=none] (2) at (0, -0.60) {};
	\end{pgfonlayer}
	\begin{pgfonlayer}{edgelayer}
		\draw (1.center) to (0);
	\end{pgfonlayer}
\end{tikzpicture}
\end{aligned}
\:
= \begin{pmatrix} 0.5 \\ 0.5 \end{pmatrix} \qquad 
\begin{aligned}
\begin{tikzpicture}
	\begin{pgfonlayer}{nodelayer}
		\node [style=sq] (0) at (0, 0) {$B|T$};
		\node [style=none] (1) at (0, 0.60) {};
		\node [style=none] (2) at (0, -0.60) {};
	\end{pgfonlayer}
	\begin{pgfonlayer}{edgelayer}
		\draw (1.center) to (0);
		\draw (0) to (2.center);
	\end{pgfonlayer}
\end{tikzpicture}
\end{aligned} 
\:
= \begin{pmatrix} 0.4 & 0.8 \\ 0.6 & 0.2 \end{pmatrix} 
\]
\[ 
\begin{aligned}
\begin{tikzpicture}
	\begin{pgfonlayer}{nodelayer}
		\node [style=none] (0) at (0.75, 0.25) {};
		\node [style=none] (1) at (-0.5, -0.25) {};
		\node [style=none] (2) at (0.5, -0.60) {};
		\node [style=none] (3) at (0, 0) {$R|TB$};
		\node [style=none] (4) at (0, 0.60) {};
		\node [style=none] (5) at (0.75, -0.25) {};
		\node [style=none] (6) at (-0.75, -0.25) {};
		\node [style=none] (7) at (0.5, -0.25) {};
		\node [style=none] (8) at (0, 0.25) {};
		\node [style=none] (9) at (-0.75, 0.25) {};
		\node [style=none] (10) at (-0.5, -0.60) {};
	\end{pgfonlayer}
	\begin{pgfonlayer}{edgelayer}
		\draw (7.center) to (2.center);
		\draw (4.center) to (8.center);
		\draw (5.center) to (6.center);
		\draw (6.center) to (9.center);
		\draw (9.center) to (0.center);
		\draw (0.center) to (5.center);
		\draw (1.center) to (10.center);
	\end{pgfonlayer}
\end{tikzpicture}
\end{aligned}
\:
= \begin{pmatrix} 0.75 & 0.15 & 0.5 & 0.1 \\ 0.25 & 0.85 & 0.5 & 0.9 \end{pmatrix}
\]
Here we have written the maps as their representations in $\mathbf{SMat}$ with respect to the basis ordering given in our definition of the variables. The causal conditional $P[R||T]$ is then 
\[
P[R||T] =
\:
\begin{aligned}
\begin{tikzpicture}
	\begin{pgfonlayer}{nodelayer}
		\node [style=none] (0) at (0.75, 1.25) {};
		\node [style=none] (1) at (-0.5, 0.75) {};
		\node [style=sq] (2) at (0.5, 0) {$B|T$};
		\node [style=none] (3) at (0, 1) {$R|TB$};
		\node [style=none] (4) at (0, 1.75) {};
		\node [style=none] (5) at (0.75, 0.75) {};
		\node [style=none] (6) at (-0.75, 0.75) {};
		\node [style=none] (7) at (0.5, 0.75) {};
		\node [style=none] (8) at (0, 1.25) {};
		\node [style=none] (9) at (-0.75, 1.25) {};
		\node [style=none] (10) at (0, -1.5) {};
		\node [style=none] (11) at (-0.5, -0.5) {};
		\node [style=none] (12) at (-0.5, -0.5) {};
		\node [style=none] (13) at (0, -1) {};
		\node [style=none] (14) at (0.5, -0.5) {};
		\node [style=none] (15) at (0, -1.75) {$T$};
		\node [style=none] (16) at (0, 2) {$R$};
	\end{pgfonlayer}
	\begin{pgfonlayer}{edgelayer}
		\draw (7.center) to (2);
		\draw (4.center) to (8.center);
		\draw (5.center) to (6.center);
		\draw (6.center) to (9.center);
		\draw (9.center) to (0.center);
		\draw (0.center) to (5.center);
		\draw [in=0, out=-90, looseness=0.75] (14.center) to (13.center);
		\draw [in=180, out=-90, looseness=0.75] (12.center) to (13.center);
		\draw (13.center) to (10.center);
		\draw (1.center) to (11.center);
		\draw (2) to (14.center);
	\end{pgfonlayer}
\end{tikzpicture}
\end{aligned}
\:
= \begin{pmatrix} 0.39 & 0.42 \\ 0.61 & 0.58 \end{pmatrix} 
\]
The elements of the first row of this matrix represent the probability of recovery given treatment and no treatment respectively, and so this agrees with our assertion that in this case we should view the treatment as ineffective, and perhaps marginally harmful.

On the other hand, writing the corresponding causal theory to the second causal structure as $\c_2$, in this case the data gives the stochastic model $Q: \c_2 \to \finstoch$ defined by the maps
\[
\begin{aligned}
\begin{tikzpicture}
	\begin{pgfonlayer}{nodelayer}
		\node [style=tri] (0) at (0, 0) {$T$};
		\node [style=none] (1) at (0, 0.60) {};
		\node [style=none] (2) at (0, -0.60) {};
	\end{pgfonlayer}
	\begin{pgfonlayer}{edgelayer}
		\draw (1.center) to (0);
	\end{pgfonlayer}
\end{tikzpicture}
\end{aligned}
\:
= \begin{pmatrix} 0.5 \\ 0.5 \end{pmatrix} \qquad 
\begin{aligned}
\begin{tikzpicture}
	\begin{pgfonlayer}{nodelayer}
		\node [style=tri] (0) at (0, 0) {$B$};
		\node [style=none] (1) at (0, 0.60) {};
		\node [style=none] (2) at (0, -0.60) {};
	\end{pgfonlayer}
	\begin{pgfonlayer}{edgelayer}
		\draw (1.center) to (0);
	\end{pgfonlayer}
\end{tikzpicture}
\end{aligned}
\:
= \begin{pmatrix} 0.6 \\ 0.4 \end{pmatrix} 
\]
\[ 
\begin{aligned}
\begin{tikzpicture}
	\begin{pgfonlayer}{nodelayer}
		\node [style=none] (0) at (0.75, 0.25) {};
		\node [style=none] (1) at (-0.5, -0.25) {};
		\node [style=none] (2) at (0.5, -0.60) {};
		\node [style=none] (3) at (0, 0) {$R|TB$};
		\node [style=none] (4) at (0, 0.60) {};
		\node [style=none] (5) at (0.75, -0.25) {};
		\node [style=none] (6) at (-0.75, -0.25) {};
		\node [style=none] (7) at (0.5, -0.25) {};
		\node [style=none] (8) at (0, 0.25) {};
		\node [style=none] (9) at (-0.75, 0.25) {};
		\node [style=none] (10) at (-0.5, -0.60) {};
	\end{pgfonlayer}
	\begin{pgfonlayer}{edgelayer}
		\draw (7.center) to (2.center);
		\draw (4.center) to (8.center);
		\draw (5.center) to (6.center);
		\draw (6.center) to (9.center);
		\draw (9.center) to (0.center);
		\draw (0.center) to (5.center);
		\draw (1.center) to (10.center);
	\end{pgfonlayer}
\end{tikzpicture}
\end{aligned}
\:
= \begin{pmatrix} 0.75 & 0.15 & 0.5 & 0.1 \\ 0.25 & 0.85 & 0.5 & 0.9 \end{pmatrix}
\]
We then may compute the causal conditional $[R||T]$ to be
\[
Q[R||T] = 
\: 
\begin{aligned}
\begin{tikzpicture}
	\begin{pgfonlayer}{nodelayer}
		\node [style=none] (0) at (0.75, 1.25) {};
		\node [style=none] (1) at (-0.5, 0.75) {};
		\node [style=tri] (2) at (0.5, -.1) {$B$};
		\node [style=none] (3) at (0, 1) {$R|TB$};
		\node [style=none] (4) at (0, 1.75) {};
		\node [style=none] (5) at (0.75, 0.75) {};
		\node [style=none] (6) at (-0.75, 0.75) {};
		\node [style=none] (7) at (0.5, 0.75) {};
		\node [style=none] (8) at (0, 1.25) {};
		\node [style=none] (9) at (-0.75, 1.25) {};
		\node [style=none] (10) at (-0.5, 0.25) {};
		\node [style=none] (11) at (0, -1.25) {};
		\node [style=none] (12) at (0, -1.5) {$T$};
		\node [style=none] (13) at (0, 2) {$R$};
	\end{pgfonlayer}
	\begin{pgfonlayer}{edgelayer}
		\draw (7.center) to (2);
		\draw (4.center) to (8.center);
		\draw (5.center) to (6.center);
		\draw (6.center) to (9.center);
		\draw (9.center) to (0.center);
		\draw (0.center) to (5.center);
		\draw [in=90, out=-90, looseness=1.50] (10.center) to (11.center);
		\draw (10.center) to (1.center);
	\end{pgfonlayer}
\end{tikzpicture}
\end{aligned}
\:
= \begin{pmatrix} 0.51 & 0.34 \\ 0.49 & 0.66 \end{pmatrix} 
\]
This again agrees with the above assertion that with this causal structure the treatment is effective, as here the probability of recovery with treatment is $0.51$, compared with a probability of recovery without treatment of $0.34$. In this case the map $Q[R||T]$ is the \emph{only} inferential map from $T$ to $R$; it thus may be seen as the only way to deduce information about $R$ from information about $T$ consistent with their causal relationship. Thus within the framework given by the causal theory, there is no possible way to come to the wrong conclusion about the efficacy of the treatment.

In the first case, however, there is one other map; we may infer information about recovery given treatment via
\[
\begin{aligned}
\begin{tikzpicture}
	\begin{pgfonlayer}{nodelayer}
		\node [style=none] (0) at (0.75, 1.25) {};
		\node [style=none] (1) at (-0.5, 0.75) {};
		\node [style=sq] (2) at (0.5, 0.15) {$B|T$};
		\node [style=none] (3) at (0, 1) {$R|TB$};
		\node [style=none] (4) at (0, 1.75) {};
		\node [style=none] (5) at (0.75, 0.75) {};
		\node [style=none] (6) at (-0.75, 0.75) {};
		\node [style=none] (7) at (0.5, 0.75) {};
		\node [style=none] (8) at (0, 1.25) {};
		\node [style=none] (9) at (-0.75, 1.25) {};
		\node [style=none] (10) at (-0.5, 0.25) {};
		\node [style=none] (11) at (0, -1.25) {};
		\node [style=none] (12) at (0, -1.5) {$T$};
		\node [style=none] (13) at (0, 2) {$R$};
		\node [style=tri] (14) at (0.5, -0.55) {$T$};
	\end{pgfonlayer}
	\begin{pgfonlayer}{edgelayer}
		\draw (7.center) to (2);
		\draw (4.center) to (8.center);
		\draw (5.center) to (6.center);
		\draw (6.center) to (9.center);
		\draw (9.center) to (0.center);
		\draw (0.center) to (5.center);
		\draw [in=90, out=-90, looseness=1.50] (10.center) to (11.center);
		\draw (10.center) to (1.center);
		\draw (2) to (14);
	\end{pgfonlayer}
\end{tikzpicture}
\end{aligned}
\:
= \begin{pmatrix} 0.51 & 0.34 \\ 0.49 & 0.66 \end{pmatrix} 
\]
As suggested by the form of the string diagram, this may be interpreted as the chance of recovery if the effect of the treatment on blood pressure is nullified, but nonetheless assuming that the proportion of patients presenting healthy blood pressures at the conclusion of the treatment was typical. In particular, this indicates that if it was inevitable that a group of patients would end up with healthy blood pressure levels in the proportion specified by $[B|T]\circ[T]$, then the treatment would be effective for this group.

Note that the string diagrams themselves encode the flow of causal influence in their depictions of the conditionals. In doing so they make the source of confusion patently clear: we may judge the effect of treatment on recovery in two different ways, one in which use information about how treatment affected blood pressure, and one in which we forget this link and assume the variables are unrelated. 

Finally, observe that causal structures are thus very relevant when interpreting data, and awareness of them can allow one to extract information that could not otherwise be extracted. Indeed, although under the second causal structure the fact that the blood pressure variable biased our selection procedure for treatment---making it more likely that we treated those with unhealthy blood pressure---can be seen as ill-considered experiment design, we see that nonetheless an understanding of the causal structure allowed us to recover the correct conclusion from the data. This becomes critically useful in cases when we do not have the abilities to correct such biases methodologically, such as when data is taken from observational studies rather than controlled experiments.

\chapter{The Structure of Stochastic Causal Models}

Our aim through this dissertation has been to develop tools to discuss causality, and in particular causal relationships between random variables. Our claim is now these are well described by \emph{stochastic causal models}: models $P: \c \to \cgst$ of a causal theory $\c$ in $\cgst$. Indeed, we have seen these are slight generalisations of Bayesian networks in which the factorisation of the joint distribution is made explicit. One advantage of moving to this setting is that we now have a natural notion of map between causal models: a monoidal natural transformation between their functors. We begin this chapter by exploring these, before using the knowledge we gain to look at the existence or otherwise of some basic universal constructions in the category of stochastic causal models.

\section{Morphisms of stochastic causal models}

Fix a causal theory $\c$. Although we have so far had no problems discussing models of causal theories in $\stoch$, we shall define the \emph{stochastic causal models} of $\c$ to be the objects of the category $\cgst^\c_{SSM}$ of strong symmetric monoidal functors $\c \to \cgst$. This more restrictive definition allows for a more well-behaved notion of maps between stochastic causal models. Indeed, we take the notion of morphism in $\cgst^\c_{SSM}$---a monoidal natural transformation between functors---to be the notion of map between stochastic causal models. As we will see in this section, these are much like deterministic stochastic maps. Our aim will be to define the terms in, and then prove, the following theorem:

\begin{thm} \label{th.cms}
Morphisms of stochastic causal models factor into a coarse graining followed by an embedding.
\end{thm}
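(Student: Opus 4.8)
The plan is to reduce the statement to the componentwise factorisation of Proposition~\ref{pr.dsms}, after first showing that every component of such a morphism is forced to be deterministic. Recall that a morphism of stochastic causal models $P,Q\colon\c\to\cgst$ is a monoidal natural transformation $\theta\colon P\Rightarrow Q$ with components $\theta_A\colon PA\to QA$. I would first define a \emph{coarse graining} (respectively an \emph{embedding}) of stochastic causal models to be such a $\theta$ all of whose components $\theta_A$ are coarse grainings (respectively embeddings) in $\cgst$; the theorem then asserts that every $\theta$ factors as $\theta=e\circ c$ with $c$ a coarse graining and $e$ an embedding of models.

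The crux is the observation that each $\theta_A$ is deterministic. Since each atomic variable of $\c$ carries comonoid maps and $P,Q$ are strong symmetric monoidal, the objects $PA$ and $QA$ inherit comonoid structures; by the lemma that each object of $\cgst$ has a \emph{unique} commutative comonoid structure, these are the canonical deterministic ones. Naturality of $\theta$ at the comultiplication $A\to A\ot A$, combined with the square~\eqref{mnt:sq} and the isomorphisms $P_\ot,Q_\ot$, gives the chase $\delta_{QA}\circ\theta_A = Q_{\ot,A,A}^{-1}\circ\theta_{A\ot A}\circ P\comult_A = (\theta_A\ot\theta_A)\circ\delta_{PA}$, so $\theta_A$ is a comonoid homomorphism (counit preservation being automatic, as the unit of $\cgst$ is terminal). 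Unwinding the comonoid-homomorphism condition against the canonical comultiplication yields $\theta_A(x,B_1\cap B_2)=\theta_A(x,B_1)\,\theta_A(x,B_2)$ for all $B_1,B_2$; taking $B_1=B_2=B$ gives $\theta_A(x,B)=\theta_A(x,B)^2\in\{0,1\}$, so $\theta_A$ is deterministic. Applying Proposition~\ref{pr.dsms} then factors each component as $\theta_A=e_A\circ c_A$ with $c_A$ a coarse graining and $e_A$ an embedding.

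The remaining task is to assemble these componentwise factorisations into a factorisation inside the category of stochastic causal models, i.e.\ to produce an intermediate model $R$ with $c\colon P\Rightarrow R$ and $e\colon R\Rightarrow Q$ monoidal natural transformations. Here I would pass to empiricisations (legitimate, since each space is isomorphic in $\cgst$ to its empiricisation), so that deterministic maps become honest functions and the factorisation is the ordinary $(\text{surjection},\text{injection})$ factorisation, which is an orthogonal factorisation system with the diagonal fill-in property. Defining $Rv$ to be the image of $\theta_v$ on atomic variables, the naturality square of $\theta$ at each causal mechanism $[v\mid\pa(v)]$ rearranges into a commuting square with the coarse graining $c_{\pa(v)}$ (an epimorphism) on one edge and the embedding $e_v$ (a monomorphism) on the opposite edge; the unique diagonal then \emph{defines} $R[v\mid\pa(v)]$ and simultaneously makes both the relevant $c$-square and $e$-square commute. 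The comonoid maps of $R$ are forced to be the canonical deterministic ones.

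The main obstacle I anticipate is verifying that $R$ so defined is a genuine strong symmetric monoidal functor and that $c,e$ are monoidal natural transformations, rather than merely families of maps. Functoriality of $R$ and naturality of $c,e$ follow from the \emph{uniqueness} of the diagonal fill-in, which forces compatibility with composition. Strong monoidality of $R$ reduces to the fact that in $\set$ one has $\mathrm{im}(f\times g)=\mathrm{im}(f)\times\mathrm{im}(g)$ and that the factorisation of $f\times g$ is the product of the factorisations of $f$ and $g$; since $\theta$ is monoidal, $\theta_{A\ot B}$ corresponds to $\theta_A\ot\theta_B$, and hence $R(A\ot B)\cong RA\ot RB$ coherently, giving the isomorphisms $R_\ot$. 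The monoidal-natural-transformation axioms~\eqref{mnt:tri} and~\eqref{mnt:sq} for $c$ and $e$ then drop out of the compatibility of this factorisation with the monoidal unit and product, completing the factorisation $\theta=e\circ c$.
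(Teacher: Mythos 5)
Your proof is correct and follows essentially the same route as the paper: naturality at the comultiplications forces each component to be a comonoid homomorphism and hence deterministic, after which Proposition~\ref{pr.dsms} factors each component as a coarse graining followed by an embedding. The only difference is that you spell out the construction of the intermediate model via images and the diagonal fill-in of the (surjection, injection) factorisation system, a step the paper compresses into the phrase ``with the causal model it factors through having the induced structure.''
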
 

To this end, let $P,Q: \c \to \cgst$ be stochastic causal models, and let $\a: P \Rightarrow Q$ be a monoidal natural transformation. By definition, this means we have a collection of stochastic maps $\a_w: Pw \to Qw$ such that for all variables $w, w' \in \c$ and all morphisms $r:Pw \to Pw'$ the following diagrams commute:
\[
\begin{aligned}
\xymatrixcolsep{3pc}
\xymatrixrowsep{3pc}
\xymatrix{
Pw \ar[r]^{Pr} \ar[d]_{\a_w} & Qw \ar[d]^{Qr}\\
Pw' \ar[r]_{\a_{w'}} & Qw'
}
\end{aligned}
\qquad \qquad
\begin{aligned}
\xymatrix{
P\varnothing \ar[rr]^{\a_\varnothing}&& Q\varnothing \\
& \ast \ar[ul]^{P_\ast} \ar[ur]_{Q_\ast}
} 
\end{aligned}
\]
\[
\begin{aligned}
\xymatrixcolsep{3pc}
\xymatrixrowsep{3pc}
\xymatrix{
Pw \ot Pw' \ar[r]^{\a_w \ot \a_{w'}} \ar[d]_{P_{\ot,w,w'}} & Qw \ot Qw' \ar[d]^{Q_{\ot,w,w'}}\\
Pww' \ar[r]_{\a_{ww'}} & Qww'
}
\end{aligned}
\]
We can, however, write this definition a bit more efficiently. 

As $P\varnothing$ and $Q\varnothing$ are isomorphic to the monoidal unit, and as the monoidal unit $\ast$ of $\cgst$ is terminal, the above triangle gives no constraints on the morphisms. The lower square specifies the relationships between the maps $\a_w$ and $\a_{w'}$ and the map $\a_{ww'}$ on the product variable. Due to this, it suffices to define the natural transformation $\a$ only on the atomic variables $v$ of $\c$, and let the commutativity of the square specify the maps $\a_w$ on the remaining variables. It thus remains to ensure that our maps $\a_v$ on the atomic variables $v$ satisfy the defining square of a natural transformation. 

We first consider the constraints given by the comonoid maps. The counit maps provide no constraint: since $Q\varnothing$ is terminal, the diagram
\[
\xymatrixcolsep{3pc}
\xymatrixrowsep{3pc}
\xymatrix{
Pv \ar[r]^{\a_v} \ar[d]_{\counit} & Qv \ar[d]^{\counit}\\
P\varnothing \ar[r]^\sim_{\a_\varnothing} & Q\varnothing
}
\]
always commutes. On the other hand, the comultiplication maps heavily constrain the $\a_v$: they require that
\[
\xymatrixcolsep{3pc}
\xymatrixrowsep{3pc}
\xymatrix{
Pv \ar[r]^{\a_v} \ar[d]_{\comult} & Qv \ar[d]^{\comult}\\
Pv \ot Pv \ar[r]_{\a_v \ot \a_v} & Qv \ot Qv
}
\]
or in string diagrams:
\[
\begin{aligned}
\begin{tikzpicture}[scale=.7]
	\begin{pgfonlayer}{nodelayer}
		\node [style=none] (0) at (0, 0) {};
		\node [style=none] (1) at (0, -1) {};
		\node [style=sq] (2) at (1, 1) {$\a_v$};
		\node [style=sq] (3) at (-1, 1) {$\a_v$};
		\node [style=none] (4) at (-1, 1.75) {};
		\node [style=none] (5) at (1, 1.75) {};
	\end{pgfonlayer}
	\begin{pgfonlayer}{edgelayer}
		\draw (0.center) to (1.center);
		\draw [in=0, out=-90, looseness=0.75] (2) to (0.center);
		\draw [in=180, out=-90, looseness=0.75] (3) to (0.center);
		\draw (4.center) to (3);
		\draw (5.center) to (2);
	\end{pgfonlayer}
\end{tikzpicture}
\end{aligned}
\quad
=
\quad
\begin{aligned}
\begin{tikzpicture}[scale=.7]
	\begin{pgfonlayer}{nodelayer}
		\node [style=none] (0) at (0, 0.75) {};
		\node [style=sq] (1) at (0, 0) {$\a_v$};
		\node [style=none] (2) at (1, 1.75) {};
		\node [style=none] (3) at (-1, 1.75) {};
		\node [style=none] (4) at (0, -1) {};
	\end{pgfonlayer}
	\begin{pgfonlayer}{edgelayer}
		\draw (0.center) to (1);
		\draw [bend left=45] (0.center) to (3.center);
		\draw [bend right=45] (0.center) to (2.center);
		\draw (4.center) to (1);
	\end{pgfonlayer}
\end{tikzpicture}
\end{aligned}
\]
The following lemma shows that this is true if and only if each $\a_v$ must be deterministic.

\begin{lem}
A stochastic map is a comonoid homomorphism if and only if it is deterministic.
\end{lem}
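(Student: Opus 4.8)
The plan is to unwind the two comonoid-homomorphism axioms into concrete conditions on the values $k(x,B)$ and then read off determinism directly.

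First I would dispose of the counit axiom. By the previous lemma the counit on any object is the unique stochastic map to the terminal object $(\ast,\{\varnothing,\ast\})$; since that object is terminal in $\stoch$, the triangle expressing preservation of the counit commutes automatically for any $k$. Hence ``$k$ is a comonoid homomorphism'' reduces to the single requirement that $k$ commute with comultiplication. Writing $\Delta_X : X \to X \times X$ and $\Delta_Y : Y \to Y \times Y$ for the diagonals, the previous lemma identifies the comultiplications with $\d_{\Delta_X}$ and $\d_{\Delta_Y}$, so this requirement is the equation
\[
\d_{\Delta_Y} \circ k = (k \ot k) \circ \d_{\Delta_X}
\]
of stochastic maps $X \to Y \times Y$.

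Next I would evaluate both sides at a point $x \in X$ and a measurable rectangle $B \times B'$ with $B, B' \in \S_Y$. Using the composition rule of $\stoch$ together with the identities for composites with deterministic maps from Example \ref{ex:dsm}, the left-hand side becomes the pushforward of $k_x$ along $\Delta_Y$, giving $k_x\big(\Delta_Y\i(B \times B')\big) = k_x(B \cap B') = k(x, B \cap B')$, while the right-hand side is $(k \ot k)\big((x,x), B \times B'\big) = k(x,B)\,k(x,B')$ by the definition of the monoidal product in $\stoch$. Testing on rectangles suffices, since for fixed $x$ both sides of the displayed equation are measures on $(Y \times Y, \S_Y \ot \S_Y)$, and the rectangles form a generating $\pi$-system, so by the uniqueness theorem for measures agreement on rectangles forces agreement on all of $\S_Y \ot \S_Y$. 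Thus the comonoid-homomorphism condition is equivalent to
\[
k(x, B \cap B') = k(x,B)\,k(x,B') \qquad \text{for all } x \in X,\ B,B' \in \S_Y.
\]

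With this identity both directions are immediate. For the forward implication, setting $B' = B$ gives $k(x,B) = k(x,B)^2$, so $k(x,B) \in \{0,1\}$ for all $x$ and $B$, which is exactly determinism. For the converse, if $k$ is deterministic then each $k_x$ is a $\{0,1\}$-valued probability measure, and I would check multiplicativity by cases: if either $k(x,B)$ or $k(x,B')$ vanishes then so does $k(x, B \cap B')$ by monotonicity, while if both equal $1$ then $k_x(B^c) = k_x(B'^c) = 0$ forces $k_x\big((B \cap B')^c\big) \le k_x(B^c) + k_x(B'^c) = 0$, whence $k(x, B \cap B') = 1$; either way the identity holds, and running the rectangle computation backwards recovers the comultiplication square. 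The step I expect to be the main obstacle is the measure-theoretic bookkeeping in the middle paragraph: correctly identifying the left-hand composite as the diagonal pushforward of $k_x$ and, in particular, justifying the reduction of an equality of measures on the full product $\sigma$-algebra to a check on rectangles via the uniqueness (Dynkin $\pi$--$\lambda$) theorem. Once the displayed pointwise identity is in place, the two algebraic implications are essentially one line each.
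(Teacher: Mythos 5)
Your proof is correct and follows essentially the same route as the paper's: dispose of the counit condition via terminality of the monoidal unit, evaluate the comultiplication square on rectangles, and extract $k(x,B)^2 = k(x,B)$. The one substantive difference is that you test on general rectangles $B \times B'$ and then invoke the $\pi$--$\lambda$ uniqueness theorem to upgrade agreement on rectangles to agreement of the two measures on all of $\S_Y \ot \S_Y$; the paper only computes on diagonal sets $B \times B$, which suffices for the forward implication but leaves the converse (that determinism forces the two measures to agree on the \emph{whole} product $\s$-algebra, not just on sets of the form $B \times B$) implicit. Your version is therefore slightly more complete, and the multiplicativity identity $k(x,B\cap B') = k(x,B)\,k(x,B')$ you isolate is exactly the right intermediate statement to make that step airtight.
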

\begin{proof}
Let $k: X \to Y$ be a stochastic map. As the monoidal unit is terminal in $\cgst$, all stochastic maps preserve the counit. We thus want to show that 
\[
\begin{aligned}
\begin{tikzpicture}[scale=.7]
	\begin{pgfonlayer}{nodelayer}
		\node [style=none] (0) at (0, 0) {};
		\node [style=none] (1) at (0, -1) {};
		\node [style=sq] (2) at (1, 1) {$k$};
		\node [style=sq] (3) at (-1, 1) {$k$};
		\node [style=none] (4) at (-1, 1.75) {};
		\node [style=none] (5) at (1, 1.75) {};
	\end{pgfonlayer}
	\begin{pgfonlayer}{edgelayer}
		\draw (0.center) to (1.center);
		\draw [in=0, out=-90, looseness=0.75] (2) to (0.center);
		\draw [in=180, out=-90, looseness=0.75] (3) to (0.center);
		\draw (4.center) to (3);
		\draw (5.center) to (2);
	\end{pgfonlayer}
\end{tikzpicture}
\end{aligned}
\quad
=
\quad
\begin{aligned}
\begin{tikzpicture}[scale=.7]
	\begin{pgfonlayer}{nodelayer}
		\node [style=none] (0) at (0, 0.75) {};
		\node [style=sq] (1) at (0, 0) {$k$};
		\node [style=none] (2) at (1, 1.75) {};
		\node [style=none] (3) at (-1, 1.75) {};
		\node [style=none] (4) at (0, -1) {};
	\end{pgfonlayer}
	\begin{pgfonlayer}{edgelayer}
		\draw (0.center) to (1);
		\draw [bend left=45] (0.center) to (3.center);
		\draw [bend right=45] (0.center) to (2.center);
		\draw (4.center) to (1);
	\end{pgfonlayer}
\end{tikzpicture}
\end{aligned}
\]
if and only if $k$ is deterministic. 

Now, given $x \in X$ and $B \in \S_Y$, the left hand side of the above equality takes value
\ba
(k \ot k)\circ \comult(x,B \times B) &= \int k \ot k(-,-,B \times B) \, d\comult_x \\
&= \int k(-,B) k(-,B) d\comult_x \\
&= k(x,B)^2,
\ea
while the right hand side equals
\ba
\comult \circ k(x,B \times B) &= \int \comult(-,B \times B) \,dk_x \\
&= \int \chi_{B} \,dk_x \\
&= k(x, B).
\ea
Thus if $k$ is a comonoid homomorphism, then $k(x,B)^2 = k(x,B)$, and hence $k(x,B) = 0$ or $1$. This shows that $k$ is deterministic. Conversely, if $k$ is deterministic, then $k(x,B)^2 = k(x,B)$ for all $x \in X$, $B \in \S_Y$, so $k$ is a comonoid homomorphism.
\end{proof}

Summing up, a morphism $\a: P \Rightarrow Q$ of stochastic causal models is specified by a collection $\{\a_v\}_{v \in V_\c}$ of deterministic stochastic maps $\a_v: Pv \to Qv$ such that for all atomic variables $v \in V_\c$ the squares
\[
\xymatrixcolsep{3pc}
\xymatrixrowsep{3pc}
\xymatrix{
P(pa(v)) \ar[r]^{\a_{pa(v)}} \ar[d]_{P[v|pa(v)]} & Q(pa(v)) \ar[d]^{Q[v|pa(v)]}\\
Pv \ar[r]_{\a_{v}} & Qv
}
\]
commute. 

We say that a morphism $\a$ of stochastic models of $\c$ is an \emph{embedding} if for all objects $v$ of $\c$ the deterministic stochastic map $\a_v$ is an embedding. Similarly, we say that a morphism $\a$ of stochastic models of $\c$ is a \emph{coarse graining} if for all objects $v$ of $\c$ the deterministic stochastic map $\a_v$ is a coarse graining. Theorem \ref{th.cms} now follows from Proposition \ref{pr.dsms}, with the causal model it factors through having the induced structure. 

We caution that despite the similar terminology to deterministic stochastic maps, the situation here differs as stochastic causal models consist of much more data than measurable spaces, and so the compatibility requirements a morphism must obey here are much stricter. For example, while in $\cgst$ it is always possible to find a deterministic map between any two objects, this is rarely possible in $\cgst^\c_{SSM}$. 

Let $P,Q: \c \to \cgst$ be stochastic causal models, and let $\a: P \Rightarrow Q$ be a morphism between them. Then for any prior $[w]$ of $\c$, the diagram
\[
\xymatrix{
Pw  \ar[rr]^{\a_{w}} && Qw \\
\\
& \ast \ar[uul]^{P[w]} \ar[uur]_{Q[w]}\\
}
\]
 commutes. This says that the pushforward measure of any prior $P[w]$ along the deterministic stochastic map $\a_w$ must agree with $Q[w]$. No such map exists, for example, when $Pw$, $Qw$ are binary discrete measurable spaces and $P[w]$, $Q[w]$ have matrix representations
\[
P[w] = \begin{pmatrix} p \\ 1-p \end{pmatrix}, \quad Q[w] = \begin{pmatrix} q \\ 1-q \end{pmatrix}
\]
with $p,q \in [0,1]$ and $q \ne 0$, $p$, $1-p$, or $1$.

As diagrams involving all morphisms of $\c$, and not just the priors, are required to commute, still more constraints apply. Although there are exceptions for finely-tuned parameters, it is generically true that if one wishes to find a coarse graining $\gamma: P \Rightarrow Q$ between two causal models, then two outcomes of a measurable space $Pw$ can be identified by the map $\gamma_w$ only when they define the same measure on the codomain for all maps $P\varphi: Pw \to Pw'$, where $\varphi: w \to w'$ is any morphism of $\c$ with domain $w$. The intuition here is that coarse grainings allow us to group outcomes and treat them as a single outcome. But for this to be possible, the outcomes must behave similarly enough to treat as one. Since outcomes now have consequences, we have much higher ability to see differences between them, and hence coarse grainings are far more restrictive for causal models than for measurable spaces.

As embeddings do not identify distinguishable outcomes of the domain, we need not worry about such complications in understanding restrictions on their construction. Nonetheless, if $\epsilon: P \Rightarrow Q$ is an embedding of stochastic causal models, then since the push-forward measure of any prior $P[w]$ along the deterministic stochastic map $\epsilon_w$ must agree with $Q[w]$, any measurable set of $Qw$ not intersecting the image of $\cgst$-embedding must have $Q[w]$-measure zero. Furthermore, for the naturality squares of the morphisms $\varphi: w \to w'$ of $\c$ to commute, each map $Q\varphi$ must behave as $P\varphi$ on the image of $\epsilon_w$. This means that an embedding $\epsilon: P \Rightarrow Q$ forces the priors of $P$ and $Q$ to be the `same' up to sets of measure zero.

\section{Basic constructions in categories of stochastic causal models}

In this final section we continue our characterisation of categories of stochastic causal models by exploring a few universal constructions. In particular, we show that these categories have a terminal object, but no initial object, and in general no products or coproducts either. Again fix a causal theory $\c$. 

\begin{prop}
The functor $T: \c \to \cgst$ sending all objects of $\c$ to the monoidal unit $\ast$ of $\cgst$ and all morphisms of $\c$ to the identity map on $\ast$ is a terminal object in the category $\cgst^\c_{SSM}$ of stochastic causal models of $\c$.
\end{prop}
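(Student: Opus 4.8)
The plan is to proceed in two stages: first confirm that $T$ is a legitimate object of $\cgst^\c_{SSM}$, i.e.\ a strong symmetric monoidal functor $\c \to \cgst$, and then verify the universal property of a terminal object. The whole argument hinges on a single fact established earlier, namely that the one-point space $(\ast, \{\varnothing, \ast\})$ is terminal in $\cgst$: from every object there is exactly one stochastic map to $\ast$.

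For the first stage, I would note that $\ast$ is finite, hence countably generated, so $T$ genuinely lands in $\cgst$. Functoriality is immediate, since every hom-set $\mor(Tw, Tw')$ is a singleton by terminality, so $T$ necessarily preserves identities and composition. For the monoidal data, the coherence maps $T_{\ot,w,w'}: Tw \ot Tw' \to T(ww')$ are maps $\ast \ot \ast \to \ast$ and $T_\ast: \ast \to T\varnothing$ is a map $\ast \to \ast$; since the product measurable space $\ast \ot \ast$ is again (isomorphic to) the terminal object, each of these is the unique map between terminal objects and hence an isomorphism, making $T$ strong. The hexagon, the two unit squares, and the symmetry square all commute automatically, as every object appearing in them is $\ast$ and any two parallel maps into a terminal object coincide.

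For terminality, let $P: \c \to \cgst$ be an arbitrary stochastic causal model. A morphism $\a: P \Rightarrow T$ consists of components $\a_w: Pw \to Tw = \ast$; by terminality of $\ast$ there is exactly one such stochastic map for each $w$, which yields both the existence and the uniqueness of the components in one stroke. It then remains only to check that this forced collection is actually a monoidal natural transformation. But each of the required diagrams---the naturality square for every $r\colon w \to w'$, the unit triangle (MNT1), and the monoidal square (MNT2)---has all of its target objects equal to $\ast$, so any two composites sharing a source are equal by terminality. Hence every such diagram commutes, $\a$ is a well-defined morphism, and it is the unique morphism $P \Rightarrow T$.

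I do not expect a serious obstacle here; the proof is essentially a repeated appeal to the terminality of $\ast$ in $\cgst$. The only point meriting a moment's care is the verification that the structure maps $T_\ot$ and $T_\ast$ are \emph{isomorphisms} rather than merely morphisms, which is what is needed for $T$ to be \emph{strong} monoidal and thus a bona fide object of $\cgst^\c_{SSM}$; this follows because $\ast \ot \ast \cong \ast$ and the unique map between terminal objects is invertible.
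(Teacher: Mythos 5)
Your proposal is correct and follows essentially the same route as the paper: everything reduces to the terminality of $\ast$ in $\cgst$, which forces both the existence and uniqueness of each component $\a_w$ and makes all the required naturality and coherence diagrams commute automatically. You are somewhat more explicit than the paper about verifying that $T$ is a well-defined \emph{strong} symmetric monoidal functor (the paper dispatches this in one sentence via $\ast \ot \ast \cong \ast$), but this is a difference of detail, not of method.
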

\begin{proof}
Note first that, since the monoidal product of $\ast$ with itself is again $\ast$, the constant functor $T: \c \to \cgst$ is a well-defined stochastic causal model.

Let $P: \c \to \cgst$ be a stochastic causal model of $\c$. We construct a monoidal natural transformation $\a: P \Rightarrow T$. Then for each $w \in \c$, define $\a_w: Pw \to Tw = \ast$ to be the unique stochastic map $Pw \to \ast$. This exists as $\ast$ is terminal in $\cgst$. Furthermore, from the fact that $\ast$ is terminal in $\cgst$ it is immediate that for each morphism of $\c$ the required naturality square commutes. As these maps to the terminal object are each deterministic, we thus have a well-defined morphism of stochastic causal models. 

By construction it is clear that this is the unique morphism of causal models $P \to T$. This proves the proposition.
\end{proof}

The functor $T$ is an example of what we will call a trivial model. Given a measure space $(X,\S,\mu)$, we define the \emph{trivial model on $(X,\S,\mu)$} to be the functor $T_\mu: \c \to \cgst$ sending each atomic variable $v$ of $\c$ to $X$, and each causal mechanism $[v|pa(v)]$ to the map $T_\mu(pa(v)) \longrightarrow \ast \stackrel{\mu}{\longrightarrow} Tv = X$ assigning to each element of $T_\mu(pa(v))$ the measure $\mu$. This represents the situation in which all the atomic variables are the same random variable with the same prior, and have no causal influence on each other. We shall use these to show the non-existence of an initial object, products, and coproducts.

\begin{prop}
The category $\cgst^\c_{SSM}$ of stochastic causal models of $\c$ has no initial object.
\end{prop}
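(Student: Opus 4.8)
The plan is to contradict the defining universal property of an initial object: that there is a \emph{unique} morphism from it to every other object. The obstruction will come from a trivial model possessing a nontrivial symmetry, which produces two distinct morphisms out of any purported initial object, so I would exploit uniqueness rather than existence. This has the pleasant feature of avoiding any case analysis on the priors of the candidate initial object.

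First I would suppose $\cgst^\c_{SSM}$ has an initial object $I$ and fix the two-point discrete measurable space $2 = \{0,1\}$ together with the uniform probability measure $\mu$ given by $\mu(\{0\}) = \mu(\{1\}) = \tfrac12$. Consider the trivial model $T_\mu: \c \to \cgst$ just defined, which sends every atomic variable to $2$ and every causal mechanism to the constant map returning the measure $\mu$; since $2$ is finite, hence countably generated, this $T_\mu$ is a genuine stochastic causal model. The key observation is that the swap $s: 2 \to 2$ exchanging $0$ and $1$ satisfies $s_*\mu = \mu$. I would use this to build an automorphism $\sigma: T_\mu \Rightarrow T_\mu$ with components $\sigma_v := \d_s$ on each atomic variable $v$. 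By the characterisation of morphisms of stochastic causal models obtained earlier in this section—deterministic stochastic maps on the atomic variables whose causal-mechanism squares commute—checking that $\sigma$ is a morphism reduces to verifying the naturality square for each $[v|pa(v)]$; but here both composites are the constant map at $\mu$ (using $s_*\mu = \mu$ on one side), so the squares commute automatically.

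With $\sigma$ in hand the contradiction is quick. By initiality there is a unique morphism $\a: I \Rightarrow T_\mu$, and since $\sigma$ is an automorphism of $T_\mu$ the composite natural transformation $\sigma\cdot\a: I \Rightarrow T_\mu$ is again a morphism of stochastic causal models (the composite of deterministic maps is deterministic, and both factors satisfy the mechanism squares). I would then argue $\sigma\cdot\a \ne \a$: for any atomic variable $v$ the space $Iv$ is nonempty, as it carries the probability measure $I[v]$, so at any point $x \in Iv$ the maps $\a_v$ and $(\sigma\cdot\a)_v = \d_s \circ \a_v$ return point measures at two \emph{different} elements of $2$, because $s$ has no fixed point. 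Hence $\a$ and $\sigma\cdot\a$ are two distinct morphisms $I \Rightarrow T_\mu$, contradicting the uniqueness demanded of an initial object. Here I tacitly assume $\c$ has at least one atomic variable, i.e. the underlying graph $G$ is nonempty; in the degenerate empty case the category is trivial.

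The point requiring the most care—the main obstacle—is confirming that $\sigma$ really is a morphism in $\cgst^\c_{SSM}$, that is, that the naturality squares at the causal mechanisms commute. This is precisely where the measure-invariance $s_*\mu = \mu$ is essential, and is why the uniform measure on a two-element set is the natural test case: it is the smallest symmetric, fixed-point-free example. The remaining steps—well-definedness of $T_\mu$, determinism of the composite, and the failure of $\sigma\cdot\a = \a$—are routine given Proposition \ref{prop.detfunc} and the earlier description of morphisms.
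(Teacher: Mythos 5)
Your proposal is correct and follows essentially the same route as the paper: both arguments hinge on the fact that the trivial model on the uniform two-point space admits the swap $s$ as a nontrivial automorphism, which is incompatible with the uniqueness clause of initiality. The only difference lies in the final step --- the paper first shows that the unique morphism $\theta: I \Rightarrow T_\nu$ is an epimorphism (since every morphism into $T_\nu$ is objectwise a coarse graining) and cancels it from $\sigma \circ \theta = \theta$, whereas you show directly that $\sigma\cdot\a$ and $\a$ differ pointwise, using that each $\a_v$ is induced by a measurable function (Proposition \ref{prop.detfunc}) and that $s$ is fixed-point-free; this is a slightly more self-contained finish that avoids classifying all maps into the trivial model.
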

\begin{proof}
We prove by contradiction. Suppose that $I: \c \to \cgst$ is an initial object of $\cgst^\c_{SSM}$.

Let $(B,\mathcal P(B),\nu)$ be the discrete measure space with two outcomes $\{b_1, b_2\}$ such that the probability of each outcome is one half, and let $T_\nu$ be the trivial model of this space. Note that as $\nu$ has full support, the only measure space $(X,\S,\mu)$ for which there exists a monic deterministic stochastic map $k: X \to B$ such that 
\[
\xymatrix{
X  \ar[rr]^{k} && B \\
\\
& \ast \ar[uul]^{\mu} \ar[uur]_{\nu}
}
\]
commutes is $(B,\mathcal P(B),\nu)$ itself. In this case $k$ must also be an epimorphism in $\cgst$; $k$ is either the identity map, or the map $s:B \to B$ induced by the function sending $b_1$ to $b_2$ and $b_2$ to $b_1$. Thus any map of stochastic causal models $\a: P \to T_\nu$ with codomain $T_\nu$ must be defined objectwise by coarse grainings of $\cgst$, and hence itself be an epimorphism in $\cgst^\c_{SSM}$.

In particular, the unique morphism of stochastic causal models $\theta_{T_\nu}: I \Rightarrow T_\nu$ must be an epimorphism. Since, by uniqueness, the diagram
\[
\xymatrix{
T_\nu  \ar@{=>}[rr]^{\a} && T_\nu \\
\\
& I \ar@{=>}[uul]^{\theta_{T_\nu}} \ar@{=>}[uur]_{\theta_{T_\nu}}\\
}
\]
must commute for any morphism of stochastic models $\a: T_\nu \Rightarrow T_\nu$, this implies that the only such morphism is the identity map. But it is readily observed that defining $\a_v = s: B \to B$ for each atomic variable $v$ of $\c$ gives a monoidal natural transformation $\a: T_\nu \Rightarrow T_\nu$ not equal to the identity. We thus have a contradiction, and so $\cgst^\c_{SSM}$ has no initial object, as claimed.
\end{proof}

\begin{ex}[Two objects which have no product]
We again work with $T_\nu$, where $(B,\mathcal P(B),\nu)$ be the discrete measure space with two outcomes $\{b_1, b_2\}$ such that the probability of each outcome is one half. We will see that the product of $T_\nu$ with itself does not exist. Suppose to the contrary that a product $X$ does exist, with projections $\pi_1, \pi_2: X \Rightarrow T_\nu$. We assume without loss of generality that each $Xv$ is empirical; recall that this means that each point of the set $Xv$ is measurable. 

Given the identity monoidal natural transformation $\id: T_\nu \Rightarrow T_\nu$, there exists a unique monoidal natural transformation $\theta: T_\nu \Rightarrow X$ such that
\[
\xymatrix{
&& T_\nu \\
T_\nu \ar@{=>}[rru]^{\id} \ar@{=>}[rrd]_{\id} \ar@{=>}[rr]^{\theta} && X \ar@{=>}[u]_{\pi_1} \ar@{=>}[d]^{\pi_2} \\
&&  T_\nu
}
\]
commutes. This shows that for each atomic variable $v \in \c$, $Xv$ has an outcome $x_1$ of measure $\frac12$ such that $\theta_v$ is induced by a function mapping $b_1$ to $x_1$, and $\pi_{1v}$ and $\pi_{2v}$ are induced by functions mapping $x_1$ to $b_1$. Similarly, we also have $x_2 \in Xv$ of measure $\frac12$ such that $\theta_v$ is induced by a function mapping $b_2$ to $x_2$, and $\pi_{1v}$ and $\pi_{2v}$ are induced by functions mapping $x_2$ to $b_2$. Note that each $Xv$ then has no other outcomes of positive measure.

Let now $\a: T_\nu \Rightarrow T_\nu$ be the monoidal natural transformation of the previous proof defined by $\a_v = s: B \to B$, where $s$ is induced by the function sending $b_1$ to $b_2$ and $b_2$ to $b_1$. Then there exists a unique monoidal natural transformation $\theta': T_\nu \Rightarrow X$ such that
\[
\xymatrix{
&& T_\nu \\
T_\nu \ar@{=>}[rru]^{\a} \ar@{=>}[rrd]_{\id} \ar@{=>}[rr]^{\theta'} && X \ar@{=>}[u]_{\pi_1} \ar@{=>}[d]^{\pi_2} \\
&&  T_\nu
}
\]
commutes. Now as $\nu(b_1) = \nu(b_2) = \frac12$, for each atomic $v \in \c$ the stochastic map $\theta'_v: B \to Xv$ must then be induced either by the function mapping $b_1$ to $x_1$ and $b_2$ to $x_2$, or by the function mapping $b_1$ to $x_2$ and $b_2$ to $x_1$. Both cases give a contradiction. In the first case, the composite $(\pi_1 \circ \theta')_v$ is then equal to the identity map on $B$, contradicting the definition of $\a$. In the second case, the composite $(\pi_2 \circ \theta')_v$ is equal to $s$, and hence not equal to $\id_B$ as required. 

Thus no product stochastic causal model $T_\nu \times T_\nu$ exists. 
\end{ex}

\begin{ex}[Two objects which have no coproduct]
Let $T$ be the terminal object of $\cgst^\c_{SSM}$, and let $T_\lambda$ be the trivial model on the Lebesgue measure $([0,1],\mathcal B_{[0,1]}, \lambda)$ of the unit interval. We show that these two stochastic causal models have no coproduct in $\cgst^\c_{SSM}$. To this end, suppose that a coproduct $X$ does exist, with injections $i_1: T_\lambda \Rightarrow X$ and $i_2: T_\ast \Rightarrow X$. We again assume without loss of generality that each $Xv$ is empirical.

To show the difficulties in constructing a coproduct, we use the test object $T_\mu$ defined as the trivial model on the measure space $(B,\mathcal P(B),\mu)$ with $B=\{b_1,b_2\}$, $\mu(b_1) =1$ and $\mu(b_2) = 0$. Note that there is a unique map $\b: T \Rightarrow T_\mu$; this is induced on each atomic $v \in \c$ by the function sending the unique point $\ast$ of $Tv$ to $b_1 \in B = T_\mu v$. This is the only such map as, since $\ast$ is a point of measure 1, its image must be a point measure on a point of measure 1. Note also this implies that for each $v \in \c$ the set $Xv$ consists of a point $x_1$ such that a measurable subset of $Xv$ has $X[v]$-measure 1 if $x_1 \in Xv$, and measure 0 otherwise.

Consider now maps $\a: T_\lambda \Rightarrow T_\mu$. These are defined by, for each atomic $v \in \c$, a choice of a Lebesgue measure 0 subset of $[0,1]$. We then may let $\a_v: [0,1] \to B$ be induced by the function mapping each element of this measure zero subset to $b_2$, and then remaining elements to $b_1$. In particular, for each $p \in [0,1]$, let $\a_p: T_\lambda \Rightarrow T_\mu$ be the monoidal natural transformation such that for all atomic $v \in \c$ the map $(\a_p)_v: [0,1] \to B$ is induced by the function mapping $p \to b_2$ and each element of $[0,1] \setminus\{p\}$ to $b_1$. By the universal property of the coproduct, for each such map there exists a unique map $\theta: X \Rightarrow T_\mu$ such that
\[
\xymatrix{
T_\lambda \ar@{=>}[d]_{i_1} \ar@{=>}[rrd]^{\a_p} \\
X \ar@{=>}[rr]^{\theta} && T_\mu \\
T \ar@{=>}[u]^{i_2} \ar@{=>}[rru]_{\b}
}
\]
commutes. This implies that for each atomic $v$ the function inducing the deterministic stochastic map $i_{1v}: [0,1] \to Xv$ does not map $p$ to $x_1$. But this implies that the push-forward measure of $\lambda$ along $i_{1v}$ is the zero measure, contradicting the commutativity of the diagram
\[
\xymatrix{
[0,1]  \ar[rr]^{i_{1v}} && Xv \\
\\
& \ast \ar[uul]^{\lambda} \ar[uur]_{X[v]}\\
}
\]
This shows that $T$ and $T_\lambda$ do not have a coproduct in $\cgst^\c_{SSM}$.
\end{ex}

\phantomsection
\addcontentsline{toc}{chapter}{Further Directions}
\chapter*{Further Directions}

In arriving at this point we have seen that causal theories provide a framework for reasoning about causal relationships, with the morphisms of these categories representing methods of inference, the topology of the string diagrams for these morphisms providing an intuitive visualisation of information flow, and the stochastic models of causal theories slight generalisations of Bayesian networks. 

There are many directions in which this study could be continued. One obvious avenue for further exploration is to continue the work of the previous chapter in the characterisation of categories of stochastic causal models. This should, at the very least, provide additional insight into relationships between Bayesian networks. Although we have seen that products and coproducts do not exist in the category of stochastic causal models, and it is likely similar arguments show other types of limits and colimits do not exist, one suggestion is to examine ideas of families and moduli of stochastic causal models. For this, call Bayesian networks equivalent if there are measure-preserving measurable functions between their joint probability distributions that compose to the identity almost everywhere, and call two stochastic causal models equivalent if their induced Bayesian networks are equivalent. It may then be possible to put some geometric structure on the set of stochastic causal models, and subsequently define a moduli problem. This will perhaps generalise work on the algebraic geometry of Bayesian networks, such as that in \cite{GSS}. One could also explore the relationships between the categories of stochastic causal models of distinct causal theories. Here one might define a functor between such categories if there exists a map of directed graphs between their underlying causal structures.

A weakness of causal theories is that their morphisms only describe predictive inference; reasoning that infers information about causes from their consequences. In general we are interested in other modes of inference too, and extension of the framework to allow discussion of these would make it much more powerful. In the probabilistic case, it can be shown that all conditionals of a joint distribution can be written as morphisms if one can also write Bayesian inverses of the causal conditionals. Given variables $w, w'$, these may be characterised as maps $k: w' \to w$ such that
\[
\begin{aligned}
\begin{tikzpicture}
	\begin{pgfonlayer}{nodelayer}
		\node [style=sq] (0) at (0.75, 0.5) {$w'||w$};
		\node [style=none] (1) at (0, -0.25) {};
		\node [style=none] (2) at (0.75, 0.5) {};
		\node [style=none] (3) at (-0.75, 1.25) {};
		\node [style=tri] (4) at (0, -0.95) {\phantom{'}$w$\phantom{'}};
		\node [style=none] (5) at (-0.75, 0.5) {};
		\node [style=none] (6) at (0.75, 1.25) {};
		\node [style=none] (8) at (0.75, 1.51) {$w'$};
		\node [style=none] (9) at (-0.75, 1.5) {$w$};		
	\end{pgfonlayer}
	\begin{pgfonlayer}{edgelayer}
		\draw (1.center) to (4);
		\draw (0) to (2.center);
		\draw (3.center) to (5.center);
		\draw (6.center) to (0);
		\draw [in=-90, out=180, looseness=0.75] (1.center) to (5.center);
		\draw [in=-90, out=0] (1.center) to (2.center);
	\end{pgfonlayer}
\end{tikzpicture}
\end{aligned}
=
\begin{aligned}
\begin{tikzpicture}
	\begin{pgfonlayer}{nodelayer}
		\node [style=none] (0) at (0.75, 1.25) {};
		\node [style=none] (1) at (0, 0) {};
		\node [style=none] (2) at (0.75, 0.75) {};
		\node [style=sq] (3) at (-0.75, 0.75) {$k$};
		\node [style=tri] (4) at (0, -0.95) {\phantom{'}$w$\phantom{'}};
		\node [style=none] (5) at (-0.75, 0.75) {};
		\node [style=none] (6) at (-0.75, 1.25) {};
		\node [style=sq] (7) at (0, -0.35) {$w'||w$};
		\node [style=none] (8) at (0.75, 1.51) {$w'$};
		\node [style=none] (9) at (-0.75, 1.5) {$w$};		
	\end{pgfonlayer}
	\begin{pgfonlayer}{edgelayer}
		\draw (0.center) to (2.center);
		\draw (3) to (5.center);
		\draw [in=-90, out=180, looseness=0.75] (1.center) to (5.center);
		\draw [in=-90, out=0] (1.center) to (2.center);
		\draw (6.center) to (3);
		\draw (4) to (7);
		\draw (7) to (1.center);
	\end{pgfonlayer}
\end{tikzpicture}
\end{aligned}
\]
Methods for constructing such maps often run into issues of uniqueness on outcomes of measure zero in the prior. While in Coecke and Spekkens \cite{CS} give a method for realising the Bayesian inverse of a finite stochastic map as transposition with respect to a compactness structure in $\mathbf{Mat}$ when the prior is of full support, and Abramsky, Blute, and Panangaden \cite{ABP} give a category, similar to $\stoch$, in which Bayesian inversion may be viewed as a dagger-functor, work remains to be done to merge these ideas with those presented here.

Another topic deserving investigation is suggested by the fact that, although a joint probability distribution is compatible with a causal structure if it satisfies the required set of conditional independence relations, not every possible combination of conditional independence relations of a set of random variables can be represented by a causal structure. Indeed, the number of combinations of conditional independence relations grows exponentially in number of atomic variables, while the number of causal structures grows only quadratically. It is possible that the richer structure of categories may allow us to define causal theories more general than those arising from causal structures, such that models in some category are those that satisfy precisely a given set of conditional independencies, and no more.

Finally, Sections 4.2 and 4.3 suggest their own further lines of investigation. While we have focussed on models in $\stoch$ and its subcategories, it would also be worthwhile to understand more thoroughly models in $\rel$, and models in the category $\mathbf{Hilb}$ of Hilbert spaces and linear maps may be interesting from the perspective of quantum theory. It would also be interesting to find further examples of applications of the graphical languages for causal theories. One option is to look at representations of algorithms used on Bayesian networks, such as Gibbs sampling in Bayesian networks \cite{Hr}.

\phantomsection
\addcontentsline{toc}{chapter}{Bibliography}
\bibliography{refs}        
\bibliographystyle{plain}  

\end{document}